\long\def\MSC#1\EndMSC{\def\arg{#1}\ifx\arg\empty\relax\else
     {\narrower\noindent%
{2010 Mathematics Subject Classification}: #1\\} \fi}
\long\def\PACS#1\EndPACS{\def\arg{#1}\ifx\arg\empty\relax\else
     {\narrower\noindent%
{PACS numbers}: #1}\fi}
\long\def\KEY#1\EndKEY{\def\arg{#1}\ifx\arg\empty\relax\else
	{\narrower\noindent%
Keywords: #1\\}\fi}
\numberwithin{equation}{section}
\theoremstyle{plain}
\newtheorem{theorem}{Theorem}[section]
\newtheorem{lemma}[theorem]{Lemma}
\newtheorem{proposition}[theorem]{Proposition}
\newtheorem{corollary}[theorem]{Corollary}
\theoremstyle{definition}
\theoremstyle{remark}
\newtheorem{remark}[theorem]{Remark}
\newcommand{\Cinf}{C^\infty}
\newcommand{\No}{\mathbb{N}_0}
\newcommand{\N}{\mathbb{N}}	
\newcommand{\Z}{\mathbb{Z}}
\newcommand{\R}{\mathbb{R}}
\newcommand{\C}{\mathbb{C}}
\newcommand{\SR}{\mathscr{S}(\R^d)}
\newcommand{\SpR}{\mathscr{S}'(\R^d)}
\newcommand{\sH}{\mathscr{H}}
\newcommand{\cO}{\mathcal{O}}
\newcommand{\sC}{\mathscr{C}}
\newcommand{\Mphi}{M_\phi(\R^{3d})}
\newcommand{\cA}{\mathcal{A}}
\newcommand{\abs}[1]{\vert #1\vert}
\newcommand{\babs}[1]{\big\vert #1\big\vert}
\newcommand{\babss}[1]{\bigg\vert #1\bigg\vert}
\newcommand{\pair}[2]{\langle #1,#2\rangle}
\newcommand{\ip}[2]{\langle #1,#2\rangle}
\newcommand{\norm}[1]{\Vert #1\Vert}
\newcommand{\Bnorm}[1]{\Big\Vert #1\Big\Vert}
\newcommand{\Op}{\mathrm{Op}}
\newcommand{\jn}[1]{\langle#1\rangle}
\newcommand{\fl}{\mathfrak{f}}
\DeclareMathOperator{\id}{id}
\newcommand{\sF}{\mathscr{F}}
\DeclareMathOperator{\Supp}{supp}
\newcommand{\dist}{\mathrm{dist}}
\newcommand{\dH}{d_\mathrm{H}}  
\newcommand{\I}{\mathrm{i}}     
\newcommand{\e}{\mathrm{e}}     
\newcommand{\di}{\mathrm{d}}    
\renewcommand{\d}{\;\mathrm{d}} 
\renewcommand{\dim}{d}
\newcommand{\ubox}{\left]-1/2,1/2\right[^\dim}
\newcommand{\ux}{\underline{x}}
\newcommand\numberthis{\addtocounter{equation}{1}\tag{\theequation}}
\renewcommand{\epsilon}{\varepsilon}
\renewcommand{\phi}{\varphi}
\begin{document}

\title[Magnetic $\Psi$DO's seen as Hofstadter matrices]{Magnetic pseudodifferential operators represented as generalized Hofstadter-like matrices}

\author[H.D.~Cornean]{Horia D. Cornean}
\address[H.D.~Cornean]{Department of Mathematical Sciences, Aalborg University \\ Skjernvej 4A, 9220 Aalborg, Denmark.}
\email{cornean@math.aau.dk}

\author[H.~Garde]{Henrik Garde}
\address[H.~Garde]{Department of Mathematical Sciences, Aalborg University \\ Skjernvej 4A, 9220 Aalborg, Denmark.}
\email{henrik@math.aau.dk (corresponding author)}

\author[B.~St{\o}ttrup]{Benjamin St{\o}ttrup}
\address[B.~St{\o}ttrup]{Department of Mathematical Sciences, Aalborg University \\ Skjernvej 4A, 9220 Aalborg, Denmark.}
\email{benjamin@math.aau.dk}

\author[K.S.~S{\o}rensen]{Kasper S. S{\o}rensen}
\address[K.S.~S{\o}rensen]{Department of Mathematical Sciences, Aalborg University \\ Skjernvej 4A, 9220 Aalborg, Denmark.}
\email{kasper@math.aau.dk}

\begin{abstract}
First, we reconsider the magnetic pseudodifferential calculus and show that for a large class of non-decaying symbols, their corresponding magnetic pseudodifferential operators can be represented, up to a global gauge transform, as generalized Hofstadter-like, bounded matrices. As a by-product, we prove a Calder{\' o}n--Vaillancourt type result. Second, we make use of this matrix representation and prove sharp results on the spectrum location when the magnetic field strength $b$ varies. Namely, when the operators are self-adjoint, we show that their spectrum (as a set) is at least $1/2$-H\"older continuous with respect to $b$ in the Hausdorff distance. Third, when the magnetic perturbation comes from a constant magnetic field we show that their spectral edges are Lipschitz continuous in $b$. The same Lipschitz continuity holds true for spectral gap edges as long as the gaps do not close.
\end{abstract}

\maketitle

\KEY
magnetic pseudodifferential operators, 
spectral estimates,
generalized Hofstadter matrices.
\EndKEY

\MSC
47A10,  
47G30, 
47G10. 
\EndMSC

\section{Introduction and Main Results}

\subsection{The general setting}
Let $d\geq 2 $ and if $x\in \R^d$ we denote $\jn{x}\coloneqq (1+\abs{x}^2)^{1/2}$. Let 
$$BC^\infty(\R^d):=\left \{f\in C^\infty(\R^d;\R):\;\sup_{x\in \R^d}\vert \partial^\alpha f(x)\vert <\infty,\quad \forall \alpha\in \No^d\right \}.$$

We consider a magnetic field given by a $2$-form $B(x)=\sum_{i,j} B_{ij}(x) \,\di x_i \wedge \di x_j$ with $B_{ij}=-B_{ji}$, $B_{ij}\in BC^\infty(\R^d)$ and $ \partial_k B_{ij}+\partial_j B_{ki}+\partial_i B_{jk}=0 $, i.e.\ $\di B=0$. Since $B$ is closed, we may write $B=\di A$ for some (non unique) $1$-form $A$. We will only work with the so-called transverse gauge~\cite{CHP2018}, defined as follows: for every $x'\in \R^d$ let
\begin{align*}
A_j(x,x'):=-\sum_{k=1}^d \int_0^1 s(x_k-x'_k)B_{jk}(x'+s(x-x'))\d s,
\end{align*}
and observe that $B=\di A(\cdot,x')$ independently of $x'$. Let $\Gamma_{x,x'}$ denote the oriented segment linking $x'$ with $x$. The $1$-form  $A(\cdot,0)-A(\cdot,x')$ is closed  and
\begin{align*}
\phi(x,x'):=\int_{\Gamma_{x,x'}}A(\cdot,0)-A(\cdot,x')=\int_{\Gamma_{x,x'}} A(\cdot,0)
\end{align*}
satisfies 
\begin{align*}
\partial_{x_j} \phi(x,x')=A_j(x,0)-A_j(x,x').
\end{align*}

Using Stokes' theorem we see that $\phi(x,x')$ equals the magnetic flux through the oriented triangle having vertices at $0$, $x$ and $x'$. We now list three important properties of $\phi$. For all $x,x',y,z\in \R^d$ and $\alpha,\alpha',\beta\in \No^d$ we have:
\begin{enumerate}
	\item There exists a constant $C_{\alpha,\alpha'}$ such that
	\begin{align}\label{eq:phiineq}
	\abs{\partial_x^\alpha\partial_{x'}^{\alpha'}\phi(x,x')}\leq C_{\alpha,\alpha'}\abs{x}\abs{x'};
	\end{align}
	\item $\phi(x,x')=-\phi(x',x)$;
	\item If $\Delta(x,y,z)$ denotes the area of the triangle with vertices $x,y,z\in \R^d$ then the map $\fl\colon \R^{3d}\to \R$ given by
	\begin{align*}
	\fl(x,y,z)\coloneqq\phi(x,y)+\phi(y,z)-\phi(x,z)
	\end{align*}
	is the magnetic flux through the triangle with vertices $x,y,z$ and satisfies
	\begin{align}\label{eq:fluxestimate}
	\abs{\partial_{x}^{\alpha}\partial_{y}^{\alpha'} \fl(x,y,z)}\leq C_{\alpha,\alpha'}\Delta(x,y,z),
	\end{align}
	for some constant $C_{\alpha,\alpha'}$.
\end{enumerate}
Given such a $\phi$ we define the \emph{magnetic symbol class} $\Mphi$ to be the set of all functions on the form
\begin{align*}
a_b(x,x',\xi)=\e^{\I b \phi(x,x')}a(x,x',\xi),
\end{align*}
where $b\in \R$ and $a\in \Cinf(\R^{3d})$ is any function for which there exists $M\geq 0$ such that
\begin{align}\label{eq:symbol}
\abs{\partial_{x}^\alpha\partial_{x'}^{\alpha'}\partial_\xi^\beta a(x,x',\xi)}\leq C_{\alpha,\alpha',\beta}\jn{x-x'}^M,
\end{align}
for every $\alpha,\alpha',\beta\in \No^d$ and some constant $C_{\alpha,\alpha',\beta}$. Note that we allow a polynomial growth in the ``relative coordinate'' direction $x-x'$. We associate to each magnetic symbol $a_b\in \Mphi$ a \emph{magnetic pseudodifferential operator} $\Op(a_b)\colon \SR\to \SpR$ given by
\begin{align}\label{hc1}
\pair{\Op(a_b)f}{g}\coloneqq\frac{1}{(2\pi)^{d}}\int_{\R^{3d}} \e^{\I \xi\cdot(x-x')}\e^{\I b \phi(x,x')}a(x,x',\xi)f(x')\overline{g(x)}\d x' \d x \d \xi,
\end{align}
for $f,g\in \SR$. By~\eqref{eq:symbol} and~\eqref{eq:phiineq} it follows that $\Op(a_b)$ is well-defined. Note that this is not the usual magnetic Weyl quantisation procedure \cite{IMP07,IMP10}, which associates a H\"ormander symbol \cite{Ho1, Ho3} $\tilde{a}\in S_{0,0}^0(\R^{2d})$ to the following operator
\begin{align}\label{hc3}
	\pair{\Op_b^{\rm W}(\tilde{a})f}{g}\coloneqq\frac{1}{(2\pi)^{d}}\int_{\R^{3d}} \e^{\I \xi\cdot(x-x')}\e^{\I b \phi(x,x')}\tilde{a}((x+x')/2,\xi)f(x')\overline{g(x)}\d x' \d x \d \xi.	
\end{align}
In Theorem \ref{thm:main} we will show that $\Op(a_b)$ can be extended to a bounded operator on $L^2(\R^d)$, provided $a_b\in \Mphi$.
We immediately see that the magnetic Weyl operators belong to our class of magnetic pseudodifferential operators. On the other hand (see Remark \ref{remarkhc1} for more details), one can also prove that the opposite inclusion holds, in the sense that given one of "our" bounded operators one can construct via the magnetic Beals criterion \cite{CHP2018, IMP10} a magnetic Weyl symbol which generates the same operator. Nevertheless, working with our class seems to be more convenient when one shows that certain commutators can be extended to bounded operators on $L^2(\R^d)$.

The first goal of our paper is to show that, up to a global unitary gauge transformation, any such object can be identified with a \emph{bounded} generalized matrix acting on $\ell^2(\Z^d;L^2(\Omega))$ where $\Omega:=\,\,]-1/2,1/2[^d$ is the open unit $d$-hypercube.

The second goal is to study how their spectrum varies with $b$ (as a set) when the operators are self-adjoint.

\subsection{Recent developments} Magnetic Schr\"odinger operators of the type 
$H_b:=\sum_{j=1}^d(-\I\partial_{x_j}-bA_j)^2+V$ where $V$ is a scalar potential play a central role in both atomic and solid-state physics. When the magnetic field is long-range (i.e.\ it does not  decay fast enough at infinity), the corresponding magnetic potentials are no longer bounded perturbations and the spectral analysis is more involved. 

There is a substantial amount of literature dedicated to such operators, especially on the problem of obtaining effective magnetic Hamiltonians. From the physics literature we only mention the pioneering works of Peierls \cite{Pe} and Luttinger \cite{Lu}. The mathematical community became interested in the problem during the Eighties and gradually put it on a firm mathematical foundation. The works by Nenciu \cite{Ne-RMP}, and Helffer and Sj\"ostrand \cite{HS1, HS, S} were probably the first ones where the existence of magnetic tight-binding models was rigorously established. Nenciu  \cite{Ne-JMP} then showed that the resolvent $(H_b-z)^{-1}$ can be seen as a twisted magnetic integral operator and that the singular behaviour comes from a phase factor like $\e^{\I b\varphi(x,x')}$. 

Moreover, it was observed \cite{KO, MP} that in the presence of a non-constant magnetic field, the usual Weyl pseudodifferential calculus based on the minimal coupling principle at the level of classical symbols does not lead to gauge invariant formulas. Iftimie, M{\u a}ntoiu and Purice  \cite{IMP07, IMP10, IP11, IP14} introduced the so-called magnetic Weyl pseudodifferential calculus in which they treated operators like in \eqref{hc3}. The case $m=\rho=\delta=0$ was inherently more difficult, but in \cite{IMP10} they managed to prove a magnetic version of the Calder{\' o}n-Vaillancourt theorem and they also generalized the Beals criterion \cite{Beals, Bo} to the magnetic case.  

Several aspects of spectral and scattering theory using magnetic Weyl pseudodifferential calculus were analysed in \cite{ML, MPR1}. Lein and De Nittis \cite{dNL}, Panati, Spohn and Teufel \cite{PST}, and Freund and Teufel \cite{FrTe} developed a pseudodifferential calculus adapted for magnetic Bloch systems and applied it to various problems coming from the space-adiabatic perturbation theory. 

A special class of results concerns the resolvent set stability of magnetic Schr\"odinger operators and the Hausdorff regularity of the spectrum when $b$ varies. Continuity of the spectrum can be proved under quite general conditions on the Hamiltonians \cite{AMP, BB, B}, while more refined properties like the Lipschitz behaviour of spectral edges were first proved by Bellissard \cite{B2} for discrete Hofstadter-like models \cite{Hof}. Cornean, Purice and Helffer \cite{Co, CHP1, CHP2, CP-1, CP-2} extended this to continuous magnetic Schr\"odinger operators, and the magnetic Weyl calculus played a crucial role.  

\subsection{Main results}
Recall that $\Omega=\ubox$ and define:
\begin{align*}
\sH:=\bigoplus_{\gamma\in \Z^d} L^2(\Omega)=\Big\{(f_\gamma)_{\gamma\in \Z^d}\subset L^2(\Omega) \mid \sum_{\gamma\in \Z^d} \norm{f_\gamma}_{L^2(\Omega)}^2 <\infty\Big\},
\end{align*}
which is a Hilbert space when equipped with the inner product
\begin{align*}
\ip{(f_\gamma)}{(g_\gamma)}_{\sH}\coloneqq\sum_{\gamma\in \Z^d} \ip{f_\gamma}{g_\gamma}_{L^2(\Omega)}.
\end{align*}
 Furthermore, for any $b\in \R$, let $U_b\colon L^2(\R^d)\to \sH$ be given by 
\begin{align}\label{hc2}
(U_bf)_\gamma(\cdot)\coloneqq \e^{-\I b \phi(\cdot+\gamma,\gamma)} \chi_\Omega(\cdot)f(\cdot+\gamma),
\end{align}
for all $f\in L^2(\R^d)$, where $\chi_\Omega$ denotes the characteristic function on $\Omega$. The operator $U_b$ is unitary and
\begin{align*}
[U^*_b(f_\gamma)_{\gamma\in\Z^d}](\cdot)=\sum_{\gamma\in\Z^d} \e^{\I b \phi(\cdot,\gamma)} \chi_\Omega(\cdot-\gamma)f_\gamma(\cdot-\gamma).
\end{align*}

We say that an operator $\cA$ on $\sH$ is a \emph{generalized matrix} of the operators $(\cA_{\gamma,\gamma'})_{\gamma,\gamma'\in\Z^d}\subset B(L^2(\Omega))$ when:
\begin{align*}
\cA= \{\cA_{\gamma,\gamma'}\}_{\gamma,\gamma'\in\Z^d}, \quad (\cA f)_\gamma=\sum_{\gamma'\in \Z^d} \cA_{\gamma,\gamma'}f_{\gamma'}
\end{align*}
for all $f=(f_\gamma)_{\gamma\in \Z^d}\in \sH$. One may also see that $\cA$ acts on $\ell^2(\Z^d;L^2(\Omega))$.

The Hausdorff distance between two compact sets $X,Y\subset \R$ is defined as:  
		\begin{align*}
		\dH(X,Y):=\max\{\sup_{x\in X} \dist(x,Y),\sup_{y\in Y} \dist(y,X)\}.
		\end{align*}

We are now ready to state our main theorem.
\begin{theorem}\label{thm:main}
	If $a_b\in \Mphi$ with $b\in [0,b_{\rm max}]$ for some $b_{\rm max}>0$, then: 
	\begin{enumerate}
		\item The operator $\Op(a_b)$ in \eqref{hc1} extends to a bounded operator on $L^2(\R^d)$ and for each $\gamma,\gamma'\in \Z^d$ there exists $\cA_{\gamma\gamma',b}\in B(L^2(\Omega))$ such that (see \eqref{hc2})
		\begin{align}\label{eq:mainthm1}
		U_b\Op(a_b)U_b^*=\{\e^{\I b \phi(\gamma,\gamma')} \cA_{\gamma\gamma',b}\}_{\gamma,\gamma'\in \Z^d}.
		\end{align}
		Moreover, for every $N\in \N$ there exists a constant $C_N$ such that
		\begin{align}\label{eq:Aggbdecay}
		\norm{\cA_{\gamma\gamma',b}}\leq C_N \jn{\gamma-\gamma'}^{-N},
		\end{align}
		and
		\begin{align}\label{eq:AggbLipschitz}
		\norm{\cA_{\gamma\gamma',b}-\cA_{\gamma\gamma',b'}}\leq C_N\jn{\gamma-\gamma'}^{-N}\abs{b-b'},\quad \textup{for } b,b'\in [0,b_{\rm max}],
		\end{align}
		for all $\gamma,\gamma'\in \Z^d$.
	\end{enumerate}
	Additionally, if $a(x,x',\xi)=\overline{a(x',x,\xi)}$ then $\Op(a_b)$ is self-adjoint and in this case:
	\begin{enumerate}
		\setcounter{enumi}{1}
		\item The spectrum of $\Op(a_b)$ is $\frac{1}{2}$-Hölder continuous in $b$ on the interval $[0,b_{\rm max}]$, i.e.\ there exists a constant $C$ such that
		\begin{align}\label{eq:main3}
		\dH(\sigma(\Op(a_b)),\sigma(\Op(a_{b'})))\leq C\abs{b-b'}^{1/2},
		\end{align}
		for all $b,b'\in [0,b_{\rm max}]$.
		
		\item Assume that $\phi$ comes from a constant magnetic field, i.e.\ $\phi(x,x')=\frac{1}{2}x^\top Bx'$ where $B$ is an antisymmetric matrix. If $E_b$ denotes the maximum (minimum) of $\sigma(\Op(a_b))$, then it is Lipschitz continuous in $b$ on $[0,b_{\rm max}]$. Furthermore, if $e_b$ denotes an edge of a spectral gap which remains open when $b$ varies in some interval $[b_1,b_2]\subset [0,b_{\rm max}]$, then $e_b$ is Lipschitz continuous on $[b_1,b_2]$.
	\end{enumerate}
\end{theorem}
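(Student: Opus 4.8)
For part (1), write $\mathcal M_b:=U_b\Op(a_b)U_b^*$ and compute it directly: insert $U_b^*,U_b$ into \eqref{hc1} and substitute $x=u+\gamma$, $x'=u'+\gamma'$ with $u,u'\in\Omega$. The three magnetic phases --- $-b\phi(u+\gamma,\gamma)$ from $U_b$, $b\phi(u+\gamma,u'+\gamma')$ from $\Op(a_b)$, $b\phi(u'+\gamma',\gamma')$ from $U_b^*$ --- recombine, via two applications of the cocycle identity $\phi(x,y)+\phi(y,z)-\phi(x,z)=\fl(x,y,z)$, into $b\phi(\gamma,\gamma')$ plus two fluxes $\fl$ of triangles whose vertices lie among $\{u+\gamma,\gamma\}$ and $\{u'+\gamma',\gamma'\}$. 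Thus $\mathcal M_b=\{\e^{\I b\phi(\gamma,\gamma')}\cA_{\gamma\gamma',b}\}$ with $\cA_{\gamma\gamma',b}$ the oscillatory-integral operator on $L^2(\Omega)$ of amplitude $\e^{-\I b[\fl(\gamma,\gamma',u'+\gamma')+\fl(u+\gamma,\gamma,u'+\gamma')]}a(u+\gamma,u'+\gamma',\xi)$, the integral made precise by the usual cut-off regularisation, legitimate by \eqref{eq:symbol}, \eqref{eq:phiineq}, \eqref{eq:fluxestimate}. To prove \eqref{eq:Aggbdecay} I would divide this amplitude by the growth factor $\jn{(u-u')+(\gamma-\gamma')}^M$ to land in the $S_{0,0}^0$-type class with seminorms uniform in $\gamma,\gamma'$ (the shifts $+\gamma,+\gamma'$ leave the constants in \eqref{eq:symbol} unchanged, and, since $\fl$ is cyclic-antisymmetric, \eqref{eq:fluxestimate} bounds all its derivatives by triangle areas, so the flux phase has seminorms at most polynomially large in $\jn{\gamma-\gamma'}$), then integrate by parts $k$ times against $\e^{\I\xi\cdot(u-u'+\gamma-\gamma')}$ --- gaining a factor $\jn{\gamma-\gamma'}^{-1}$ per step once $|\gamma-\gamma'|\gtrsim1$ --- and apply a Calder\'on--Vaillancourt/Cotlar--Stein estimate for amplitude operators on the bounded cube $\Omega$; this gives $\|\cA_{\gamma\gamma',b}\|\le C_k\jn{\gamma-\gamma'}^{-k}$ for every $k$. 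Since $b$ enters $\cA_{\gamma\gamma',b}$ only through the unimodular, $\xi$-independent flux factor, $\partial_b\cA_{\gamma\gamma',b}$ is the same type of operator with an extra factor $-\I(\fl+\fl)$ bounded by triangle areas, so the identical argument yields $\|\partial_b\cA_{\gamma\gamma',b}\|\le C_k\jn{\gamma-\gamma'}^{-k}$ and hence \eqref{eq:AggbLipschitz}. Finally \eqref{eq:Aggbdecay} makes the row/column sums of $\{\|\cA_{\gamma\gamma',b}\|\}$ finite, so a Schur test bounds $\mathcal M_b$ on $\sH$ (the scalars $\e^{\I b\phi(\gamma,\gamma')}$ not mattering), whence $\Op(a_b)$ is bounded on $L^2(\R^d)$ --- the Calder\'on--Vaillancourt by-product.

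For part (2), self-adjointness follows by exchanging $x\leftrightarrow x'$ in \eqref{hc1} and using $\phi(x',x)=-\phi(x,x')$, which gives $\overline{\pair{\Op(a_b)f}{g}}=\pair{\Op(a_b)g}{f}$ (equivalently $\cA_{\gamma\gamma',b}=\cA_{\gamma'\gamma,b}^*$). For \eqref{eq:main3} the key observation is that $b\mapsto\mathcal M_b$ is \emph{not} norm-continuous --- the phases $\e^{\I b\phi(\gamma,\gamma')}$ with $|\phi(\gamma,\gamma')|\lesssim|\gamma|\,|\gamma'|$ are to blame --- so one cannot compare the operators globally and must localise. By symmetry it suffices to bound $\dist(\lambda,\sigma(\Op(a_{b'})))$ for $\lambda\in\sigma(\Op(a_b))$. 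Take a Weyl sequence $(\psi_n)$ for $\lambda$ and a quadratic partition of unity $\sum_j\chi_{R,j}^2=1$ by $R$-dilated bumps; because $\|[\mathcal M_b,\chi_{R,j}]\|\lesssim1/R$ (the entries decay rapidly), a weighted-average argument produces $\psi_n'$ supported in a cell-ball $B_{cR}(\gamma_0)$ with $\|(\mathcal M_b-\lambda)\psi_n'\|\le\sqrt2\,\|(\mathcal M_b-\lambda)\psi_n\|+C/R$. Conjugating $\mathcal M_b$ by the diagonal unitary $\{\e^{\I(b'-b)\phi(\gamma,\gamma_0)}\delta_{\gamma\gamma'}\}$ turns the entry phase $\e^{\I b\phi(\gamma,\gamma')}$ into $\e^{\I b'\phi(\gamma,\gamma')}\e^{\I(b-b')\fl(\gamma_0,\gamma',\gamma)}$; on the ball $|\fl(\gamma_0,\gamma',\gamma)|\le C_0\,\Delta(\gamma_0,\gamma',\gamma)\le\tfrac12 cR\,|\gamma-\gamma'|$ by \eqref{eq:fluxestimate}, and combining this with \eqref{eq:AggbLipschitz} and absorbing the factor $|\gamma-\gamma'|$ into the decay of $\cA_{\gamma\gamma',b'}$ shows that, tested on states supported in $B_{cR}(\gamma_0)$, the conjugated matrix differs from $\mathcal M_{b'}$ by an operator of norm $\le C(1+R)|b-b'|$. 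Hence, letting $n\to\infty$, $\dist(\lambda,\sigma(\Op(a_{b'})))\le C/R+C(1+R)|b-b'|$, and optimising $R\sim|b-b'|^{-1/2}$ gives \eqref{eq:main3}.

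For part (3), a constant field makes $\phi(x,x')=\tfrac12 x^\top Bx'$ bilinear, with two consequences: $\Op(a_b)$ is covariant under the magnetic translations $\tau_v$, with $\tau_v\Op(a_b)\tau_v^*=\Op(a_b^{(v)})$ where $a_b^{(v)}(x,x',\xi)=\e^{\I b\phi(x,x')}a(x-v,x'-v,\xi+bBv)$ still obeys \eqref{eq:symbol} with the same constants; and $\partial_b\Op(a_b)=\Op(\I\phi a_b)$ can, after one integration by parts in $\xi$ (using $x^\top Bx'=-\tfrac12(x+x')^\top B(x-x')$), be represented by an amplitude growing only linearly in position. For an edge $E_b=\max\sigma(\Op(a_b))$ (or $\min$) the plan is the variational/Feynman--Hellmann route: take a near-optimal state, localise it to scale $R$, re-centre it at the origin by a magnetic translation (which changes neither $E_b$ nor the symbol bounds), and control $\abs{\langle(\Op(a_b)-\Op(a_{b'}))\psi,\psi\rangle}=\abs{\int_{b'}^b\langle\Op(\I\phi a_s^{(\gamma_0)})\psi,\psi\rangle\d s}$, the constant-field structure being exactly what makes the localisation loss and the linearly growing part of $\partial_b\Op$ cancel out to an $O(|b-b'|)$ bound, with no residual loss, as in Bellissard's treatment of discrete Hofstadter matrices and the Cornean--Helffer--Purice continuum analysis; for a gap edge $e_b$ that stays open on $[b_1,b_2]$, the Riesz projection onto the part of the spectrum below the gap has an exponentially decaying matrix (Combes--Thomas) and is norm-Lipschitz in $b$, so the same edge estimate applies to the compression of $\Op(a_b)$ to its range. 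The genuinely hard step throughout is precisely this last one --- showing that the honestly unbounded $b$-derivative $\Op(\I\phi a_b)$ becomes harmless when tested against states concentrated near a spectral edge, which is where the constant-field/translation-covariance hypothesis is used essentially; parts (1) and (2) are, by comparison, careful bookkeeping of oscillatory integrals, magnetic phases, and the area bound \eqref{eq:fluxestimate}.
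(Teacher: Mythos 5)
For part (1), you track the paper's overall idea but finish by invoking the ordinary Calder\'on--Vaillancourt theorem for each cell operator on $L^2(\Omega)$. The paper deliberately avoids this: it cuts off and Fourier-expands the $(x,x')$-dependence of the twisted amplitude, so that each $\cA_{\gamma\gamma',b}$ becomes a rapidly convergent sum of operators which are one-dimensional Fourier multipliers in $\xi$ conjugated by $\e^{\I m\cdot\ux}$ and $\e^{\I m'\cdot\ux'}$; their norms are then controlled by Parseval alone (see \eqref{eq:AggbepsBen}--\eqref{eq:AggbBen} and Lemma~\ref{lem:3}). This yields a self-contained Calder\'on--Vaillancourt-type result rather than importing one — which matters here, since the paper advertises that result as a by-product rather than an input. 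Both arguments are sound; yours is shorter but relies on a heavier black box.

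For part (2) you take a genuinely different route, and it is legitimate. The paper truncates the matrix to $|\gamma-\gamma'|<|\delta b|^{-1/2}$, decomposes the comparison into the four steps \eqref{eq:part21}--\eqref{eq:part23}, and handles the hard one \eqref{eq:part24} by constructing an approximate right inverse $W_{\delta b}((H_{\delta b,b_0}-z)^{-1})$ from a quadratic partition of unity dressed with the phases $\e^{\pm\I\delta b\phi(\gamma,n|\delta b|^{-1/2})}$ (Lemmas~\ref{lem:Weps}--\ref{lem:pseudoinverse}). Your Weyl-sequence argument — localize the almost-eigenvector to scale $R$ via IMS, conjugate by the diagonal gauge $\{\e^{\I(b'-b)\phi(\gamma,\gamma_0)}\delta_{\gamma\gamma'}\}$, bound the resulting flux $\fl(\gamma,\gamma_0,\gamma')$ by $CR|\gamma-\gamma'|$ using \eqref{eq:fluxestimate}, optimize $R\sim|b-b'|^{-1/2}$ — exploits the same gauging idea and the same area bound, but replaces the parametrix with a direct spectral-distance estimate. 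For readers who only want the Hausdorff bound, your formulation is arguably more transparent.

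Part (3) has a genuine gap. First, the single-state Feynman--Hellmann sketch with localization and re-centering cannot by itself improve on part (2): the flux error from one gauge conjugation contributes $O(R\abs{\delta b})$, the IMS loss contributes $O(1/R)$, and optimizing $R$ returns $\abs{\delta b}^{1/2}$, not $\abs{\delta b}$. Your assertion that ``the constant-field structure is exactly what makes \ldots\ cancel out, with no residual loss'' names the right culprit but offers no mechanism. The mechanism in the paper is an \emph{average} over gauge centres, not a single re-centering: the heat-kernel decomposition \eqref{eq:identityeksponential} writes $\e^{\I\delta b\phi(\gamma,\gamma')}$ as a Gaussian average of one-centre phases $\e^{\I\delta b\phi(y',\cdot)}$, and the linear-in-$\delta b$ part of the remaining flux vanishes identically because $\Lambda_{\gamma,\gamma',t}(\phi(\gamma-\cdot,\cdot-\gamma'))=0$ — antisymmetry of $\phi$ against the symmetric Gaussian, which is precisely where bilinearity (constant field) enters. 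Without that averaging there is no route from H\"older to Lipschitz. Second, your claim that the Riesz projection is ``norm-Lipschitz in $b$'' is false, for exactly the reason you yourself gave in part (2): $b\mapsto H_b$ is not norm-continuous because the phases $\e^{\I b\phi(\gamma,\gamma')}$ are unbounded in $(\gamma,\gamma')$, and the same obstruction propagates through the contour integral to the Riesz projection. What the paper actually proves (Step 2 of Section 4.2, using Lemma~\ref{lemma-horia} for rapid off-diagonal decay of the resolvent) is that $T_{b_0+\delta b}$ is $O(\delta b)$-close in norm to the \emph{phase-twisted} operator $\widetilde{T}^{\delta b}_{b_0}$ with entries $\e^{\I\delta b\phi(\gamma,\gamma')}[T_{b_0}]_{\gamma,\gamma'}$; the extremal-edge result is then applied to the family $\{\widetilde{T}^{\delta b}_{b_0}\}_{\delta b}$, which has the same structure as the original $H_b$. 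Without these two corrections — the averaging that kills the linear term, and the phase-twist that restores norm comparability of the Riesz projections — your gap-edge argument collapses.
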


\begin{remark}
The representation \eqref{eq:mainthm1} justifies the name ``generalized Hofstadter matrix'' \cite{Hof, B2}. In the ``classical'' Hofstadter-like setting one deals with a discrete operator acting on $\ell^2(\Z^d;\C)$ where the matrix entries are complex numbers. In our case they are bounded operators on $L^2(\Omega)$. Furthermore, the matrix elements are strongly localized around the diagonal as in \eqref{eq:Aggbdecay}. We also note that after rotating $\Op(a_b)$ with $U_b$, the only singular behaviour in $b$ is left in the ``Peierls''-like phase $\e^{\I b\varphi(\gamma,\gamma')}$, since the entries $\cA_{\gamma\gamma',b}$ are Lipschitz in $b$ in the norm topology, see \eqref{eq:AggbLipschitz}. For  nearest-neighbor Hofstadter-like operators it is known from the works of Bellissard, Helffer--Sj\"ostrand and Nenciu that the spectrum is $\frac{1}{2}$-H\"older continuous and that the exponent $\frac{1}{2}$ is optimal in the sense that gaps of order $|b-b'|^{1/2}$ may open in the spectrum (for more details see \cite{Co,Ne05} and references within).
\end{remark}
\begin{remark}\label{remarkhc1} Our class $M_\varphi(\R^{3d})$ of symbols  which obey \eqref{eq:symbol} is more convenient to work with, but \emph{it does not} generate ``more'' operators than the ``usual'' magnetic Weyl quantisation \eqref{hc3}. Let us show that given any operator $\Op(a_b)$ as in \eqref{hc1} one may find  a H\"ormander symbol $\tilde{a}\in S^0_{0,0}(\R^{2d})$ such that $\Op(a_b)=\Op_b^{\rm W}(\tilde{a})$, where $\Op_b^{\rm W}(\tilde{a})$ is as in~\eqref{hc3}. In order to prove this we use the Beals criterion for magnetic pseudodifferential operators~\cite{IMP10, CHP2018}.  Namely, let us denote $ W_k=X_k $ if $k=1,2,\dots,d$ and $W_k=-\I\partial_{x_{k-d}}-bA_{k-d}(\cdot,0)$ if $ k=d+1,\dots,2d$. Then we will show using Theorem \ref{thm:main}(1) that all the commutators of the form
	\begin{align*}
	[W_{j_1},[W_{j_2},\dots,[W_{j_m},\Op(a_b)]\ldots]],
	\end{align*}
	$j_\ell\in \{1,2,\dots,2d\}$, $ m\geq 1 $, can be extended to bounded operators on $L^2(\R^d)$, hence \eqref{hc3} holds due to the magnetic Beals criterion. We only show this for $m=1$, the general case follows by induction. 
	
	Indeed, integration by parts gives
	\begin{align*}
	\pair{[X_k,\Op(a_b)]f}{g}=\frac{\I}{(2\pi)^{d}}\int_{\R^{3d}} \e^{\I \xi\cdot(x-x')}\e^{\I b\phi(x,x')}(\partial_{\xi_k}a)(x,x',\xi)f(x')\overline{g(x)}\d x' \d x \d \xi,
	\end{align*}
	which by Theorem~\ref{thm:main}(1) can be extended to a bounded operator on $L^2(\R^d)$. Using again integration by parts together with the fact that $\partial_{x_j} \phi(x,x')=A_j(x,0)-A_j(x,x')$ we obtain after a straightforward computation that the commutator $[(-\I\partial_{x_j}-bA_j),\Op(a_b)]$ is a magnetic pseudodifferential operator with magnetic symbol 
	\begin{align*}
	\e^{\I b\phi(x,x')}\big (bA_j(x',x)-bA_j(x,x')-\I(\partial_{x_j}+\partial_{x'_j})\big )a(x,x',\xi)\in M_\varphi(\R^{3d}).
	\end{align*}
	Here we see the advantage of allowing polynomial growth in $x-x'$, because even though $A_j(x,x')$ and $A_j(x',x)$ have a linear growth in $|x-x'|$ we can directly apply Theorem~\ref{thm:main}(1) and the commutator can be extended to a bounded operator on $L^2(\R^d)$. 
\end{remark}

\begin{remark}
	If it is possible to choose a vector potential $A$ such that 
	\begin{align}\label{eq:vecpotben1}
	\abs{\partial_{x}^\alpha A_j(x)}\leq C_{\alpha},
	\end{align}
	for all multiindices $\alpha$ with $\abs{\alpha}>0$, then every magnetic pseudodifferential operator would correspond to a non-magnetic Weyl type pseudodifferential operator. In order to show this we use the non-magnetic Beals criterion. First, note that the commutator $[-bA_j(\cdot),\Op(a_b)]$ is a magnetic pseudodifferential operator with magnetic symbol 
	\begin{align*}
	\e^{\I b\phi(x,x')}b(A_j(x')-A_j(x))a(x,x',\xi).
	\end{align*}
	By Theorem~\ref{thm:main}(1) the above commutator extends to a bounded operator in $L^2(\R^d)$. Using Remark~\ref{remarkhc1} we obtain that $[-\I\partial_{x_j},\Op(a_b)]$ extends to a bounded operator on $L^2(\R^d)$. After an induction argument we obtain that $\Op(a_b)$ satisfies the classical non-magnetic Beals criterion. 
	
	However, we note that \eqref{eq:vecpotben1} does not necessarily hold for the transverse gauge, although the constant magnetic field obeys this condition. Furthermore, to obtain sharp results on the behaviour of $\sigma(\Op(a_b))$ as the magnetic field strength varies, using the non-magnetic Weyl quantisation is not convenient when one  works with nonconstant magnetic fields.
\end{remark}

\subsection{The structure of the paper}

After this introduction, in  Section~\ref{sec:proof1} we prove  Theorem~\ref{thm:main}(1) by regularizing our magnetic symbol and writing the corresponding magnetic pseudodifferential operator as an integral operator with a smooth integral kernel. By rewriting in a clever way the kernel of this operator we are able to construct the right hand side of~\eqref{eq:mainthm1} as a strong limit of a regularized sequence of operators. 

In Section~\ref{sec:proof2} we prove Theorem~\ref{thm:main}(2) by adapting some ideas coming from geometric perturbation theory and \cite{CP-1}.   

In Section~\ref{sec:proof3} we prove Theorem~\ref{thm:main}(3) in the case when $E_b$ is the maximum of the spectrum. Finally, we show how to deal with inner gap edges. 

\vspace{0.5cm}

\noindent {\bf Acknowledgments.} H.C. gratefully acknowledges inspiring discussions with S.~Beckus, J.~Bellissard, B.~Helffer, G.~Nenciu, and R.~Purice. 

This research is supported by grant 8021--00084B \emph{Mathematical Analysis of Effective Models and Critical Phenomena in Quantum Transport} from The Danish Council for Independent Research \textbar\ Natural Sciences.

\section{Proof of Theorem~\ref{thm:main}(1) } \label{sec:proof1}
For simplicity we assume that $a_b(x,x',\xi)=\e^{\I b \phi(x,x')}a(x,x',\xi)$ where $a$ is a symbol of Hörmander class $S^0_{0,0}(\R^{3d})$ i.e.\ $M=0$ in~\eqref{eq:symbol}. The proof can then be extended to any $M\geq 0$ (see Remark~\ref{rem:1<<<<<<<<<<<<<<<<} for more details).
\subsection{Regularization of Magnetic Symbols}
We begin by regularizing the symbol $a_b$ in order to write the corresponding magnetic pseudodifferential operator as a generalized matrix of integral operators with smooth integral kernels.
\begin{lemma}
	Let $a_b\in \Mphi$. For $\epsilon>0$ define $a_{b,\epsilon}\colon \R^{3d}\to \C$ by
	\begin{align*}
	a_{b,\epsilon}(x,x',\xi)\coloneqq a_b(x,x',\xi)\e^{-\epsilon \jn{\xi}}
	\end{align*}
	and $K_{b,\epsilon}\colon \R^{2d}\to \C$ by
	\begin{align*}
	K_{b,\epsilon}(x,x')\coloneqq \frac{1}{(2 \pi)^d}\int_{\R^d}\e^{\I \xi\cdot(x-x')}a_{b,\epsilon}(x,x',\xi)\d \xi.
	\end{align*}
	Then the integral operator with kernel $K_{b,\epsilon}$ is a bounded operator on $L^2(\R^d)$ and  for $f\in \SR$ we have
	\begin{align}\label{eq:abeps}
	(\Op(a_{b,\epsilon})f)(x)=\int_{\R^d}K_{b,\epsilon}(x,x')f(x')\d x'.
	\end{align}
\end{lemma}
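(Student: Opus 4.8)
The plan is to treat both assertions by a standard regularization-plus-Schur-test argument, exploiting that the cutoff $\e^{-\epsilon\jn{\xi}}$ makes all the relevant integrals absolutely convergent and stays exponentially decaying after differentiation in $\xi$ (this is precisely why one uses $\jn{\xi}$ rather than $\abs{\xi}$: it is smooth at the origin). Throughout I work under the running assumption $M=0$, so $a\in S^0_{0,0}(\R^{3d})$ and in particular $\sup_{\R^{3d}}\abs{\partial_\xi^\gamma a}<\infty$ for every $\gamma$. For the kernel formula, observe that for $f,g\in\SR$ the integrand in \eqref{hc1} with $a$ replaced by $a_{b,\epsilon}$ is dominated by $C\,\e^{-\epsilon\jn{\xi}}\abs{f(x')}\abs{g(x)}\in L^1(\R^{3d})$, since $\e^{-\epsilon\jn{\xi}}\in L^1(\R^d_\xi)$ and $f,g$ are Schwartz; Fubini then permits carrying out the $\xi$-integration first, which by definition produces $K_{b,\epsilon}(x,x')$, so $\pair{\Op(a_{b,\epsilon})f}{g}=\int_{\R^{2d}}K_{b,\epsilon}(x,x')f(x')\overline{g(x)}\d x'\d x$. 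Since moreover $\sup_{x,x'}\abs{K_{b,\epsilon}(x,x')}\leq(2\pi)^{-d}C\int_{\R^d}\e^{-\epsilon\jn{\xi}}\d\xi<\infty$, the inner integral converges absolutely for each $x$, and \eqref{eq:abeps} follows.

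To obtain off-diagonal decay of the kernel I insert the identity $\jn{x-x'}^{2N}\e^{\I\xi\cdot(x-x')}=(1-\Delta_\xi)^N\e^{\I\xi\cdot(x-x')}$ into the $\xi$-integral defining $K_{b,\epsilon}$ and integrate by parts $2N$ times; no boundary terms appear because $a_{b,\epsilon}$ and all its $\xi$-derivatives decay like $\e^{-\epsilon\jn{\xi}}$. This yields
\[
K_{b,\epsilon}(x,x')=\frac{\e^{\I b\phi(x,x')}}{(2\pi)^d}\jn{x-x'}^{-2N}\int_{\R^d}\e^{\I\xi\cdot(x-x')}(1-\Delta_\xi)^N\big[a(x,x',\xi)\e^{-\epsilon\jn{\xi}}\big]\d\xi.
\]
By the Leibniz rule the bracket is a finite sum of terms $(\partial_\xi^\gamma a)(x,x',\xi)\,\partial_\xi^\delta\e^{-\epsilon\jn{\xi}}$; the first factor is bounded by the $S^0_{0,0}$-estimates, while $\abs{\partial_\xi^\delta\e^{-\epsilon\jn{\xi}}}\leq C_\delta(\epsilon)\e^{-\epsilon\jn{\xi}}$ because every derivative of $\jn{\xi}$ of order $\geq 1$ is bounded. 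Hence $\abs{K_{b,\epsilon}(x,x')}\leq C_{N,\epsilon}\jn{x-x'}^{-2N}$, the unimodular and $\xi$-independent phase $\e^{\I b\phi(x,x')}$ playing no role in the estimate.

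Finally, choosing $2N>d$ gives $\sup_x\int_{\R^d}\abs{K_{b,\epsilon}(x,x')}\d x'\leq C_{N,\epsilon}\int_{\R^d}\jn{y}^{-2N}\d y<\infty$, and likewise $\sup_{x'}\int_{\R^d}\abs{K_{b,\epsilon}(x,x')}\d x<\infty$ by symmetry of the bound in $x,x'$, so the Schur test shows the integral operator with kernel $K_{b,\epsilon}$ is bounded on $L^2(\R^d)$. There is no genuine obstacle here; the only points needing care are the justification of Fubini in the first step and the absence of boundary terms in the integration by parts in the second, both immediate from the exponential cutoff. For general $M\geq 0$ the bound \eqref{eq:symbol} turns the estimate of the second step into $\abs{K_{b,\epsilon}(x,x')}\leq C_{N,\epsilon}\jn{x-x'}^{M-2N}$, and one simply takes $2N>M+d$.
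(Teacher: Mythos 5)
Your argument is correct and matches the paper's own proof essentially line for line: Fubini to obtain the kernel representation, repeated $\xi$-integration by parts with $(1-\Delta_\xi)^N$ (enabled by the $\e^{-\epsilon\jn{\xi}}$ cutoff) to extract $\jn{x-x'}^{-2N}$ decay, and then the Schur--Holmgren test for $L^2$-boundedness. No gaps.
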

\begin{proof}
	The proof is a consequence of integration by parts, Schur-Holmgren lemma~\cite[Lemma 18.1.12]{Ho3} and the identity
	\begin{align}\label{eq:jnrewrite}
	\jn{x}^{2n}=\sum_{\abs{\alpha}\leq n} C_\alpha x^{2\alpha},
	\end{align}
	which holds for $x\in \R^d$.
Using Fubini's theorem gives
\begin{align*}
\pair{\Op(a_{b,\epsilon})f}{g}=\int_{\R^{2d}}K_{b,\epsilon}(x,x')f(x') \overline{g(x)}\d x'\d x
\end{align*}
for $f,g\in \SR$ which proves~\eqref{eq:abeps}.
\end{proof}

Next we show that the operator $\cA_{b,\epsilon}\coloneqq U_b\Op(a_{b,\epsilon})U_b^*$ can be written as a generalized matrix of integral operators on $L^2(\Omega)$. In the following we underline variables to indicate that they belong to $\Omega$. By the definition of $U_b, U_b^*$ and~\eqref{eq:abeps} we have that
\begin{align}\label{eq:abepsben1}
(\cA_{b,\epsilon}(f_{\gamma'}))_\gamma(\ux)=\sum_{\gamma'\in \Z^d}\int_\Omega K_{b,\epsilon}(\ux+\gamma,\ux'+\gamma')\e^{\I b(\phi(\ux'+\gamma',\gamma')-\phi(\ux+\gamma,\gamma))}f_{\gamma'}(\ux')\d \ux',
\end{align}
for $(f_{\gamma'})\in \sH$. If for every $\gamma,\gamma'\in \Z^d$ we define
\begin{align*}
\fl_{\gamma,\gamma'}(x,x')\coloneqq \fl(x+\gamma,\gamma',\gamma)+\fl(x+\gamma,x'+\gamma',\gamma')
\end{align*}
and 
\begin{align}\label{eq:ubopaubstar}
K_{\gamma,\gamma'}(\ux,\ux'):=\frac{1}{(2\pi)^d}\int_{\R^d}  \e^{\I \xi\cdot(\ux+\gamma-\ux'-\gamma')}\e^{\I b \fl_{\gamma,\gamma'}(\ux,\ux')} \e^{-\epsilon \jn{\xi}} a(\ux+\gamma,\ux'+\gamma',\xi) \d \xi,
\end{align}
then we can use the identity
\begin{align*}
\phi(x+\gamma,x'+\gamma')=-\phi(x'+\gamma',\gamma')+\phi(x+\gamma,\gamma)+\phi(\gamma,\gamma')+\fl_{\gamma,\gamma'}(x,x')
\end{align*}
to write \eqref{eq:abepsben1} as
\begin{align*}
(\cA_{b,\epsilon}(f_{\gamma'}))_\gamma(\ux)=\sum_{\gamma'\in \Z^d}\e^{\I b\phi(\gamma,\gamma')}\int_\Omega K_{\gamma,\gamma'}(\ux,\ux') f_{\gamma'}(\ux')\d \ux'.
\end{align*}
This shows that the operator $\cA_{b,\epsilon}$ is a generalized matrix i.e.\
\begin{align}\label{eq:AbepsdirectsumBen}
\cA_{b,\epsilon}= \{\e^{\I b\phi(\gamma,\gamma')}\cA_{\gamma\gamma',b,\epsilon}\}_{\gamma,\gamma'\in \Z^d},
\end{align}
where the operators $\cA_{\gamma\gamma',b,\epsilon}$ are integral operators with kernel $K_{\gamma,\gamma'}$. The next step in the proof is to construct operators $\cA_{\gamma\gamma',b}$, which are strong limits of $\cA_{\gamma\gamma',b,\epsilon}$ as $\epsilon\to 0$.
\subsection{Construction of \texorpdfstring{$\cA_{\gamma\gamma',b}$}{Aggb}}
We rewrite the kernel of the operator $\cA_{\gamma\gamma',b,\epsilon}$ for each $\gamma,\gamma'\in \Z^d$ in a way that allows us to take $\epsilon$ to zero. Before we construct the operators $ \cA_{\gamma\gamma',b} $ we note, as a consequence of~\eqref{eq:fluxestimate}, that for every $\alpha,\alpha'\in \No^d$ there exists $C_{\alpha,\alpha'}$ such that
\begin{align}\label{eq:fluxbound}
\abs{\partial_x^\alpha\partial_{x'}^{\alpha'}\fl_{\gamma,\gamma'}(x,x')}\leq C_{\alpha,\alpha'}\jn{\gamma-\gamma'}
\end{align}
for all $x,x'\in \tilde{\Omega}\coloneqq[-\pi,\pi]^d$.

The first step in the construction is to obtain a Fourier series (for each fixed $\xi$) of the function
\begin{align*}
\Omega^2\ni(\ux,\ux')\mapsto a(\ux+\gamma,\ux'+\gamma',\xi)\e^{\I b\fl_{\gamma,\gamma'}(\ux,\ux')},
\end{align*}
for all $\gamma,\gamma'\in \Z^d$. In order to circumvent the problem that this function is not necessarily periodic let $g\in C^\infty_0(\tilde{\Omega})$ be such that $0\leq g\leq 1$ and $g\equiv 1$ on some open set containing $\Omega$. Then for every $\gamma,\gamma' \in \Z^d$ the function
\begin{align}\label{eq:atwitle}
\tilde{\Omega}^2\ni(x,x')\mapsto g(x)g(x')a(x+\gamma,x'+\gamma',\xi)\e^{\I b \fl_{\gamma,\gamma'}(x,x')},
\end{align}
can be extended to a periodic function in $x,x'$ and hence has a Fourier series expansion. Before we consider this expansion we note that for any $\alpha,\alpha',\beta\in \No^d$ Leibniz's rule and~\eqref{eq:fluxbound} gives the existence of a constant $C_{\alpha,\alpha',\beta}$, not depending on $b$, satisfying
\begin{align}\label{eq:agammagammaprimebbound}
\babss{\partial_{x}^\alpha\partial_{x'}^{\alpha'}\partial_{\xi}^{\beta}\Big(g(x)g(x')a(x+\gamma,x'+\gamma',\xi)\e^{\I b \fl_{\gamma,\gamma'}(x,x')}\Big)}&\leq C_{\alpha,\alpha',\beta}\jn{\gamma-\gamma'}^{\abs{\alpha}+\abs{\alpha'}}.
\end{align}
This is because the left hand side depends polynomially on $b$, therefore by the assumption that $b\in [0,b_{\rm max}]$ it follows that the right hand side can be chosen independently of $b$.

We would like to obtain an explicit decay in the summation variables $m,m'$ for the Fourier series of~\eqref{eq:atwitle}. To avoid cumbersome notation we will annotate functions and operators, within this section, which depend on the variables $\gamma,\gamma',m,m'\in \Z^d$ with a tilde accent. To obtain the aforementioned decay in the Fourier series we define for every $\gamma,\gamma',m,m'\in \Z^d$ the function
\begin{align*}
\tilde{a}_{b}(\xi)\coloneqq\frac{(\jn{m}\jn{m'})^{2d}}{(2\pi)^{2d}}\int_{\tilde{\Omega}^2} \e^{-\I(m\cdot x+m'\cdot x')}g(x)g(x')a(x+\gamma,x'+\gamma',\xi)\e^{\I b \fl_{\gamma,\gamma'}(x,x')}\d x \d x',
\end{align*}
and use integration by parts together with \eqref{eq:agammagammaprimebbound} to obtain the estimate 
\begin{align}\label{eq:tildeagammagammaprimemmprimeNestimate}
\abs{\partial_{\xi}^{\beta}\tilde{a}_{b}(\xi)}&\leq C_{\beta}\jn{\gamma-\gamma'}^{4d},
\end{align}
for all $\beta\in \No^{d}$. The Fourier series of the function in \eqref{eq:atwitle} then becomes
\begin{align*}
g(x)g(x')a(x+\gamma,x'+\gamma',\xi)\e^{\I b \fl_{\gamma,\gamma'}(x,x')}=\sum_{m,m'\in \Z^d} \frac{\e^{\I(m\cdot x+m'\cdot x')}}{(\jn{m}\jn{m'})^{2d}}\tilde{a}_{b}(\xi).
\end{align*}
Since $g \equiv 1$ on $\Omega$ it follows that the kernels $K_{\gamma,\gamma'}$ in \eqref{eq:ubopaubstar} can be written as
\begin{align*}
K_{\gamma,\gamma'}(\ux,\ux')&=\frac{1}{(2\pi)^d}\sum_{m,m'\in \Z^d}\frac{1}{(\jn{m}\jn{m'})^{2d}} \int_{\R^d}  \e^{\I \xi\cdot(\ux+\gamma-\ux'-\gamma')} \e^{\I(m\cdot\ux+m'\cdot\ux')}
\tilde{a}_{b}(\xi) \e^{-\epsilon\jn{\xi}}\d \xi.
\end{align*}
Since the function $\tilde{a}_b$ only depends on $\xi$ we can use the exponential factors $\e^{\I \xi\cdot \ux}$ and $\e^{\I \xi \cdot\ux'}$ that appear in $K_{\gamma,\gamma'}$ to write each $\cA_{\gamma\gamma',b,\epsilon}$ as a series of pseudodifferential operators. Specifically, if we for every $\gamma,\gamma',m,m'\in \Z^d$ define the operators $\tilde{\cA}_{b,\epsilon}\colon C^\infty_0(\Omega)\to \SR$ by 
\begin{equation*}
(\tilde{\cA}_{b,\epsilon}h)(x)\coloneqq \e^{\I m\cdot x}\sF^{-1}\bigg[\e^{\I (*)\cdot (\gamma-\gamma')}\tilde{a}_b(*) \e^{-\epsilon\jn{*}}\sF\Big( \e^{\I m'\cdot(\cdot)}h(\cdot)\Big)(*)\bigg](x),
\end{equation*}
for all $\epsilon\geq 0$, then Fubini's theorem implies that
\begin{equation}\label{eq:AggbepsBen}
(\cA_{\gamma\gamma',b,\epsilon}h)(\ux)=\sum_{m,m'\in \Z^d} \frac{1}{(\jn{m}\jn{m'})^{2d}}(\tilde{\cA}_{b,\epsilon} h)(\ux),
\end{equation}
for all $h\in C^\infty_0(\Omega)$ and $\epsilon > 0$.
Since $\tilde{\cA}_{b,\epsilon}$ is well-defined even when $\epsilon=0$ we define $\cA_{\gamma\gamma',b}$ on $C^\infty_0(\Omega)$ by
\begin{equation}\label{eq:AggbBen}
(\cA_{\gamma\gamma',b}h)(\ux)\coloneqq(\cA_{\gamma\gamma'b,0}h)(\ux)=\sum_{m,m'\in \Z^d} \frac{1}{(\jn{m}\jn{m'})^{2d}}(\tilde{\cA}_{b,0}h)(\ux).
\end{equation}
We will later prove that $\cA_{\gamma\gamma',b,\epsilon}$ converges strongly to $\cA_{\gamma\gamma',b}$ and use this to show that $\cA_{\gamma\gamma',b}$ satisfy Theorem~\ref{thm:main}. 

\subsection{Norm Estimates: Proof of \texorpdfstring{\eqref{eq:Aggbdecay}}{(1.5)} and \texorpdfstring{\eqref{eq:AggbLipschitz}}{(1.6)}}
The aim of this section is to prove the following lemma, from which both~\eqref{eq:Aggbdecay} and~\eqref{eq:AggbLipschitz} follow immediately.
\begin{lemma}\label{lem:3}
	Suppose $b,b'\in [0,b_{\rm max}]$. Then for every $N\in \N$ there exists a constant $C_{N}$ such that
	\begin{align}\label{eq:prop1}
	\norm{\cA_{\gamma\gamma',b,\epsilon}h}_{L^2(\Omega)}\leq C_{N}\jn{\gamma-\gamma'}^{4d-2N}\norm{h}_{L^2(\Omega)}
	\end{align}
	and
	\begin{align}\label{eq:prop2}
	\norm{(\cA_{\gamma\gamma',b,\epsilon}-\cA_{\gamma\gamma',b',\epsilon}) h}_{L^2(\Omega)}&\leq C_{N} \jn{\gamma-\gamma'}^{4d+1-2N}\abs{b-b'}\norm{h}_{L^2(\Omega)},
	\end{align}
	for all $h\in C^\infty_0(\Omega)$ and all $\epsilon\in [0,1]$. 
\end{lemma}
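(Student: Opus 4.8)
The plan is to prove Lemma~\ref{lem:3} by controlling each summand $\tilde{\cA}_{b,\epsilon}$ uniformly in $\epsilon\in[0,1]$ and then summing the series in $m,m'$, which converges thanks to the $(\jn{m}\jn{m'})^{-2d}$ weights already built into \eqref{eq:AggbepsBen} and \eqref{eq:AggbBen}. The operator $\tilde{\cA}_{b,\epsilon}$ is, up to the harmless bounded unimodular multipliers $\e^{\I m\cdot x}$ and $\e^{\I m'\cdot x}$, a Fourier multiplier with symbol $\e^{\I\xi\cdot(\gamma-\gamma')}\tilde{a}_b(\xi)\e^{-\epsilon\jn{\xi}}$, composed with the inclusion $C_0^\infty(\Omega)\hookrightarrow\SR$ and restriction back to $\Omega$. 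So the first step is: write $\norm{\tilde{\cA}_{b,\epsilon}h}_{L^2(\Omega)}\le \norm{\tilde{\cA}_{b,\epsilon}h}_{L^2(\R^d)}$ and estimate the latter using that $\|\e^{\I m\cdot x}\cdot\|=\|\cdot\|$ and that the extension of $h$ by zero has the same $L^2$ norm as $h$.

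Second, to gain the decay in $\gamma-\gamma'$ I would exploit the oscillatory factor $\e^{\I\xi\cdot(\gamma-\gamma')}$: integrate by parts $N$ times in $\xi$ inside the inverse Fourier transform, i.e.\ use $\jn{\gamma-\gamma'}^{-2N}(1-\Delta_\xi)^N \e^{\I\xi\cdot(\gamma-\gamma')} = \e^{\I\xi\cdot(\gamma-\gamma')}$, to transfer $2N$ $\xi$-derivatives onto $\tilde a_b(\xi)\e^{-\epsilon\jn{\xi}}$ at the cost of a factor $\jn{\gamma-\gamma'}^{-2N}$. Since $\jn{\xi}$ and all its derivatives are bounded on $\R^d$ and $0\le\e^{-\epsilon\jn{\xi}}\le1$, every $\xi$-derivative of $\e^{-\epsilon\jn{\xi}}$ is bounded uniformly in $\epsilon\in[0,1]$; combined with the symbol bound \eqref{eq:tildeagammagammaprimemmprimeNestimate}, $\abs{\partial_\xi^\beta\tilde a_b(\xi)}\le C_\beta\jn{\gamma-\gamma'}^{4d}$, this yields that the new multiplier symbol is bounded (in $L^\infty$, hence as an $L^2\to L^2$ multiplier, since a Fourier multiplier with bounded symbol is bounded on $L^2$ with the $L^\infty$ norm as operator norm — no smoothness needed here) by $C_N\jn{\gamma-\gamma'}^{4d}$. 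That gives $\norm{\tilde{\cA}_{b,\epsilon}h}_{L^2(\Omega)}\le C_N\jn{\gamma-\gamma'}^{4d-2N}\norm h_{L^2(\Omega)}$, and summing over $m,m'$ against the weights (bounded, $\le C$) produces \eqref{eq:prop1}.

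Third, for the Lipschitz estimate \eqref{eq:prop2} I would write $\tilde{\cA}_{b,\epsilon}-\tilde{\cA}_{b',\epsilon}$ as the Fourier multiplier with symbol $\e^{\I\xi\cdot(\gamma-\gamma')}(\tilde a_b(\xi)-\tilde a_{b'}(\xi))\e^{-\epsilon\jn{\xi}}$ and note that $\tilde a_b-\tilde a_{b'}$ is, by its defining integral, $\frac{(\jn m\jn{m'})^{2d}}{(2\pi)^{2d}}\int_{\tilde\Omega^2}\e^{-\I(m\cdot x+m'\cdot x')}g(x)g(x')a(x+\gamma,x'+\gamma',\xi)(\e^{\I b\fl_{\gamma,\gamma'}}-\e^{\I b'\fl_{\gamma,\gamma'}})\,\di x\,\di x'$. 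The elementary bound $\abs{\e^{\I b t}-\e^{\I b' t}}\le\abs{b-b'}\abs t$ together with \eqref{eq:fluxbound} (so $\abs{\fl_{\gamma,\gamma'}}\le C\jn{\gamma-\gamma'}$ on $\tilde\Omega^2$) gives one extra power $\abs{b-b'}\jn{\gamma-\gamma'}$; more carefully, one should Taylor-expand $\e^{\I b\fl}-\e^{\I b'\fl}=\I(b-b')\fl_{\gamma,\gamma'}\int_0^1\e^{\I(b'+s(b-b'))\fl_{\gamma,\gamma'}}\,\di s$ so that the resulting integrand is again of the form covered by \eqref{eq:agammagammaprimebbound} (with one extra $\fl$ factor, handled by Leibniz and \eqref{eq:fluxbound}), uniformly in $b,b',s$; then the same integration-by-parts-in-$\xi$ argument as above yields $\abs{\partial_\xi^\beta(\tilde a_b-\tilde a_{b'})(\xi)}\le C_\beta\jn{\gamma-\gamma'}^{4d+1}\abs{b-b'}$, hence \eqref{eq:prop2} after summing over $m,m'$.

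The routine bookkeeping (Leibniz rule, keeping track of which constants depend on $b_{\rm max}$) is tedious but straightforward; the only genuine subtlety — and the step I would be most careful about — is the uniformity in $\epsilon\in[0,1]$, i.e.\ making sure that all $\xi$-derivatives of $\e^{-\epsilon\jn{\xi}}$ are bounded by $\epsilon$-independent constants. This follows because $\partial_\xi^\beta\e^{-\epsilon\jn{\xi}}$ is a finite sum of terms $\epsilon^{k}P(\xi)\e^{-\epsilon\jn{\xi}}$ with $P$ a product of bounded functions (derivatives of $\jn{\xi}$) and $1\le k\le\abs\beta$, and $\sup_{\epsilon\in[0,1],\,\xi\in\R^d}\epsilon^k\e^{-\epsilon\jn{\xi}}\le\sup_{t\ge0}t^k\e^{-t}<\infty$; this is exactly why the regularizing factor was chosen as $\e^{-\epsilon\jn{\xi}}$ rather than, say, a cutoff. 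Once this uniformity is in hand, letting $\epsilon\to0$ in \eqref{eq:prop1}–\eqref{eq:prop2} (or simply reading the case $\epsilon=0$, which is included) will later give the bounds \eqref{eq:Aggbdecay} and \eqref{eq:AggbLipschitz} for $\cA_{\gamma\gamma',b}$ itself via the forthcoming strong-convergence argument.
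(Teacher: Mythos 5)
Your first two steps contain a genuine gap that invalidates the argument as written. You begin by passing to $\norm{\tilde{\cA}_{b,\epsilon}h}_{L^2(\Omega)}\le \norm{\tilde{\cA}_{b,\epsilon}h}_{L^2(\R^d)}$ and then treating $\tilde{\cA}_{b,\epsilon}$ as a Fourier multiplier on $L^2(\R^d)$ whose symbol you estimate in $L^\infty$. But this throws away exactly the information that produces the decay in $\gamma-\gamma'$: on $L^2(\R^d)$, the multiplier with symbol $\e^{\I\xi\cdot(\gamma-\gamma')}\tilde a_b(\xi)\e^{-\epsilon\jn{\xi}}$ has operator norm equal to $\norm{\tilde a_b\,\e^{-\epsilon\jn{\cdot}}}_{L^\infty}$, since the oscillatory phase is unimodular. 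Concretely, multiplication by $\e^{\I\xi\cdot(\gamma-\gamma')}$ on the Fourier side is just translation by $\gamma'-\gamma$ on the physical side, a unitary on $L^2(\R^d)$ whose norm is $1$ no matter how large $|\gamma-\gamma'|$ is. No $\gamma$-$\gamma'$ decay can emerge from an $L^2(\R^d)$ multiplier estimate; the decay comes precisely from the fact that the output is measured only on $\Omega$ while the input $h$ lives in $\Omega$, so a large translation pushes essentially all of $h$ out of the window.

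The integration-by-parts step compounds this: when you use $\jn{\gamma-\gamma'}^{-2N}(1-\Delta_\xi)^N\e^{\I\xi\cdot(\gamma-\gamma')}=\e^{\I\xi\cdot(\gamma-\gamma')}$ and integrate by parts in $\xi$, the transferred operator $(1-\Delta_\xi)^N$ lands by Leibniz's rule on \emph{all} the remaining $\xi$-dependent factors — on $\e^{\I\xi\cdot x}$, on $\tilde a_b(\xi)\e^{-\epsilon\jn{\xi}}$, \emph{and} on $\sF\big(\e^{\I m'\cdot(\cdot)}h(\cdot)\big)(\xi)$ — not only on the symbol. Derivatives hitting $\e^{\I\xi\cdot x}$ produce powers $x^{\alpha}$ (unbounded on $\R^d$, but bounded by $1$ on $\Omega$), and derivatives hitting $\sF\big(\e^{\I m'\cdot(\cdot)}h(\cdot)\big)$ produce $\sF\big((\cdot)^{\alpha'}\e^{\I m'\cdot(\cdot)}h(\cdot)\big)$, again harmless only because $h$ is supported in $\Omega$. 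This is exactly what the paper does: it keeps the finite Leibniz expansion with extra factors $\ux^{\alpha_n}$ and $(\cdot)^{\alpha_n'}$, bounds those by $1$ using $\ux,\ux'\in\Omega$, and only then applies Parseval twice. Your claim that all $2N$ derivatives go onto $\tilde a_b\,\e^{-\epsilon\jn{\xi}}$ alone is false, and a quick sanity check exposes it: if it were true, you would conclude that the $L^2(\R^d)$ operator norm of a Fourier multiplier decays like $\jn{\gamma-\gamma'}^{-2N}$ even though its symbol differs from a fixed bounded function only by a unimodular factor. The remaining ingredients of your proposal — the $\epsilon$-uniform bounds on derivatives of $\e^{-\epsilon\jn{\xi}}$, the treatment of $\e^{\I b\fl}-\e^{\I b'\fl}$ via $|\e^{\I y}-1|\le|y|$ and \eqref{eq:fluxbound}, and the summability over $m,m'$ against the built-in $(\jn{m}\jn{m'})^{-2d}$ weights — are sound and match the paper. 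What must be corrected is the initial passage to $L^2(\R^d)$ and the claim about where the Leibniz derivatives land.
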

From Lemma~\ref{lem:3} it follows that $\cA_{\gamma\gamma',b}$ extends to a bounded operator on $L^2(\Omega)$.

\begin{proof}
Let $N\in \N$ be arbitrary. From \eqref{eq:AggbepsBen} and \eqref{eq:AggbBen} it is clear that in order to estimate \eqref{eq:prop1} we have to estimate the norm of $\jn{\gamma-\gamma'}^{2N} \tilde{\cA}_{b,\epsilon}$ for $\epsilon\in [0,1]$. Applying~\eqref{eq:jnrewrite} together with integration by parts and Leibniz's rule gives the existence of a constant $M_N\in \N$ and sequences $(C_n)_{n=1}^{M_N}\subset \C$, $(\alpha_n)_{n=1}^{M_N}$, $(\alpha_n')_{n=1}^{M_N}$, $(\beta_n)_{n=1}^{M_N}\subset \No^d$ not depending on $h$ such that
\begin{align*}
\jn{\gamma-\gamma'}^{2N}(\tilde{\cA}_{b,\epsilon} h)(\ux)=\sum_{n=1}^{M_N} C_n \ux^{\alpha_n}\sF^{-1}\bigg[\e^{\I (*)\cdot (\gamma-\gamma')}\partial_{(*)}^{\beta_n}[\tilde{a}_b(*) \e^{-\epsilon\jn{*}}]\sF\Big((\cdot)^{\alpha_n'} \e^{\I m'\cdot(\cdot)}h(\cdot)\Big)(*)\bigg](\ux),
\end{align*}
for all $h\in C^\infty_0(\Omega)$ and $\epsilon\geq 0$.  

In order to show~\eqref{eq:prop1} it only remains to obtain a suitable estimate of the norm of the right hand side. By applying Parseval's identity twice we obtain
\begin{align*}
\jn{\gamma-\gamma'}^{2N}\norm{\tilde{\cA}_{b,\epsilon}h}_{L^2(\Omega)}\leq\sum_{n=1}^{M_N} C_n\norm{\partial_{(*)}^{\beta_n}[\tilde{a}_b(*)\e^{-\epsilon\jn{*}}]}_{L^\infty(\R^d)}\bigg(\int_{\Omega}\abs{(x')^{\alpha_n'} h(x')}^2\d x'\bigg)^{1/2},
\end{align*}
for all $h\in C^\infty_0(\Omega)$ and $\epsilon\geq 0$. Since $h\in C^\infty_0(\Omega)$ we have the bound $\abs{(x')^{\alpha_n'}h(x')}\leq \abs{h(x')}$ for all $n=1,\dots,M_N$. Combining this inequality with the estimate~\eqref{eq:tildeagammagammaprimemmprimeNestimate} and the fact that any finite number of derivatives of $\e^{-\epsilon\jn{\cdot}}$ is uniformly bounded for $\epsilon \in [0,1]$ gives the estimate 
\begin{align*}
\jn{\gamma-\gamma'}^{2N}\norm{\tilde{\cA}_{b,\epsilon}h}_{L^2(\Omega)}\leq C_{N}\jn{\gamma-\gamma'}^{4d}\norm{h}_{L^2(\Omega)},
\end{align*}
for all $h\in C^\infty_0(\Omega)$, $\epsilon\geq 0$ and some constant $C_{N}$ not depending on $b$. This completes the proof of~\eqref{eq:prop1}.

To prove \eqref{eq:prop2} we need to subtract two functions as in \eqref{eq:atwitle} but with different choices of $b$ and obtain an estimate similar to \eqref{eq:agammagammaprimebbound}. By \eqref{eq:atwitle} such a difference is given by
\begin{align*}
g(x)g(x')a(x+\gamma,x'+\gamma',\xi)\e^{\I b' \fl_{\gamma,\gamma'}(x,x')}[\e^{\I(b-b')\fl_{\gamma,\gamma'}(x,x')}-1],
\end{align*}
for $b,b'\in [0,b_{\rm max}]$. Using that for all $y\in \R$ we have
\begin{align}\label{eq:complexexponentialinequality}
\abs{\e^{\I y}-1}\leq \abs{y},
\end{align}
together with~\eqref{eq:fluxbound},~\eqref{eq:agammagammaprimebbound} gives for any $\alpha,\alpha',\beta\in \No^d$ the existence of a constant $C_{\alpha,\alpha',\beta}$ such that
\begin{align*}\label{eq:tildeagammagammaprimeboneminusbtwo}
\babss{\partial_{x}^\alpha\partial_{x'}^{\alpha'}\partial_{\xi}^{\beta}\Big(g(x)g(x')a(x+\gamma,x'+\gamma',\xi)\e^{\I b' \fl_{\gamma,\gamma'}(x,x')}&[\e^{\I(b-b')\fl_{\gamma,\gamma'}(x,x')}-1]\Big)}\\
&\qquad\leq C_{\alpha,\alpha',\beta}\abs{b-b'}\jn{\gamma-\gamma'}^{\abs{\alpha}+\abs{\alpha'}+1}.\numberthis
\end{align*}
Note that when we use Leibniz's rule on the left hand side every term will contain a factor on the form $(b-b')^n$ with $n\in \N$ and since $b,b'\in [0,b_{\rm max}]$ we can absorb the extra factors in the constant. By using~\eqref{eq:tildeagammagammaprimeboneminusbtwo} in calculations similar to those that gave~\eqref{eq:tildeagammagammaprimemmprimeNestimate} we obtain
\begin{align*}
\abs{\partial_{\xi}^{\beta}\tilde{a}_b(\xi)-\partial_{\xi}^{\beta}\tilde{a}_{b'}(\xi)}&\leq C_{\beta}\abs{b-b'}\jn{\gamma-\gamma'}^{4d+1},
\end{align*}
for all $\beta\in \No^d$ and somce constant $C_\beta$. With this estimate the proof of~\eqref{eq:prop2} follows the same way as the proof of~\eqref{eq:prop1}.
\end{proof}

\subsection{Strong convergence of \texorpdfstring{$\cA_{b,\epsilon}$}{Abe}}
In this section we prove that $\cA_{\gamma\gamma',b,\epsilon}$ converges strongly to $\cA_{\gamma\gamma',b}$ as $\epsilon$ goes to zero (cf.\ \eqref{eq:AggbBen}). Furthermore, we construct an operator $H_b$ as the generalized matrix with entries $\e^{\I b\phi(\gamma,\gamma')}\cA_{\gamma\gamma',b}$. Using the strong convergence $\cA_{\gamma\gamma',b,\epsilon}\to \cA_{\gamma\gamma',b}$ we prove that $\cA_{b,\epsilon}$ in \eqref{eq:AbepsdirectsumBen} converges strongly to $H_b$. Finally, we apply this to continuously extend $\Op(a_b)$ to an operator in $B(L^2(\R^d))$. 

\begin{lemma}\label{lem:ben1}
	For each $\gamma,\gamma'\in \Z^d$ the operators $\cA_{\gamma\gamma',b,\epsilon}$ converge strongly to $\cA_{\gamma\gamma',b}$ on $C^\infty_0(\Omega)$. 
\end{lemma}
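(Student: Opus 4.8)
The plan is to establish the strong convergence $\tilde{\cA}_{b,\epsilon}h \to \tilde{\cA}_{b,0}h$ in $L^2(\Omega)$ for each fixed $\gamma,\gamma',m,m'$ and each $h\in C^\infty_0(\Omega)$, and then to upgrade this to the convergence of the series defining $\cA_{\gamma\gamma',b,\epsilon}$ via a dominated-convergence argument over the summation indices $m,m'$. First I would fix $h\in C^\infty_0(\Omega)$ and write, using Parseval's identity,
\begin{align*}
\norm{(\tilde{\cA}_{b,\epsilon}-\tilde{\cA}_{b,0})h}_{L^2(\Omega)}^2 \leq \int_{\R^d}\Babs{(\e^{-\epsilon\jn{\xi}}-1)\,\tilde{a}_b(\xi)\,\sF\big(\e^{\I m'\cdot(\cdot)}h\big)(\xi)}^2\d\xi,
\end{align*}
where I have used that $\abs{\e^{\I\xi\cdot(\gamma-\gamma')}}=1$ and that multiplication by $\e^{\I m\cdot x}$ is an isometry. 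Since $h\in\SR$, the function $\sF(\e^{\I m'\cdot(\cdot)}h)$ is Schwartz, and by~\eqref{eq:tildeagammagammaprimemmprimeNestimate} the factor $\tilde{a}_b$ is bounded; hence the integrand is dominated by an integrable function independent of $\epsilon\in[0,1]$, while pointwise $(\e^{-\epsilon\jn{\xi}}-1)\to 0$ as $\epsilon\to 0$. Dominated convergence then yields $\norm{(\tilde{\cA}_{b,\epsilon}-\tilde{\cA}_{b,0})h}_{L^2(\Omega)}\to 0$.

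Next I would pass from the individual operators $\tilde{\cA}_{b,\epsilon}$ to the series~\eqref{eq:AggbepsBen}. The point is that the estimate proved in Lemma~\ref{lem:3} (more precisely, the bound on $\jn{\gamma-\gamma'}^{2N}\norm{\tilde{\cA}_{b,\epsilon}h}_{L^2(\Omega)}$ established inside its proof, applied with the roles of the indices tracked) shows that $\norm{\tilde{\cA}_{b,\epsilon}h}_{L^2(\Omega)}$ is bounded uniformly in $\epsilon\in[0,1]$ by a constant independent of $m,m'$ times $\norm{h}_{L^2(\Omega)}$; together with the summable prefactor $(\jn{m}\jn{m'})^{-2d}$ this gives an $\epsilon$-uniform summable majorant for the series. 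Actually one needs a little care: the crude bound from~\eqref{eq:tildeagammagammaprimemmprimeNestimate} does not decay in $m,m'$, so to get a summable majorant I would instead integrate by parts in $x,x'$ in the definition of $\tilde{a}_b$ a few more times, exploiting~\eqref{eq:agammagammaprimebbound}, to obtain $\abs{\partial_\xi^\beta\tilde{a}_b(\xi)}\leq C_\beta\jn{\gamma-\gamma'}^{C}\jn{m}^{-d-1}\jn{m'}^{-d-1}$ — or more simply, observe that since the full series $\sum_{m,m'}(\jn{m}\jn{m'})^{-2d}\tilde{\cA}_{b,\epsilon}h$ converges to $\cA_{\gamma\gamma',b,\epsilon}h$ with the quantitative bound~\eqref{eq:prop1}, one may split the sum into a finite part $\abs{m},\abs{m'}\leq R$ and a tail, handle the tail uniformly in $\epsilon$ via the same argument that produced~\eqref{eq:prop1} (applied to the truncated series), and send $R\to\infty$. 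Then for the finite part I apply the termwise convergence from the previous paragraph.

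Concretely, the cleanest route: for any $\delta>0$ choose $R$ so large that, uniformly in $\epsilon\in[0,1]$ and for both the $\epsilon$-operator and the $\epsilon=0$ operator, the tail $\sum_{\max(\abs{m},\abs{m'})>R}(\jn{m}\jn{m'})^{-2d}\norm{\tilde{\cA}_{b,\epsilon}h}_{L^2(\Omega)}<\delta/3$ — this is possible by re-running the integration-by-parts estimate that gave~\eqref{eq:prop1} but keeping the full $(\jn{m}\jn{m'})^{-2d}$ weights so the tail in $m,m'$ is genuinely small. For that fixed $R$, the finite sum $\sum_{\max(\abs{m},\abs{m'})\leq R}(\jn{m}\jn{m'})^{-2d}\norm{(\tilde{\cA}_{b,\epsilon}-\tilde{\cA}_{b,0})h}_{L^2(\Omega)}<\delta/3$ once $\epsilon$ is small enough, by the termwise convergence. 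Combining the three pieces gives $\norm{(\cA_{\gamma\gamma',b,\epsilon}-\cA_{\gamma\gamma',b})h}_{L^2(\Omega)}<\delta$ for $\epsilon$ small, which is the claim.

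I expect the main obstacle to be the uniform-in-$\epsilon$ control of the tail of the series in $m,m'$: the estimate~\eqref{eq:tildeagammagammaprimemmprimeNestimate} as stated absorbs the $(\jn{m}\jn{m'})^{2d}$ growth into the definition of $\tilde{a}_b$ and therefore gives no decay, so one genuinely has to go back into the integration-by-parts bookkeeping — differentiating the exponentials $\e^{-\I(m\cdot x+m'\cdot x')}$ and integrating by parts in $x,x'$, using~\eqref{eq:agammagammaprimebbound} to bound the resulting derivatives of $g(x)g(x')a\,\e^{\I b\fl_{\gamma,\gamma'}}$ — to extract extra powers $\jn{m}^{-1}\jn{m'}^{-1}$ (indeed as many as one wants). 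Once that refined decay is in hand, the dominated-convergence argument both in $\xi$ (for fixed $m,m'$) and in $(m,m')$ (for the series) is routine.
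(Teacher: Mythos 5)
Your proof is correct and is essentially the paper's own argument: Parseval reduces the claim to the scalar multiplier $\tilde a_b(\xi)(\e^{-\epsilon\jn\xi}-1)$, dominated convergence in $\xi$ handles each fixed $(m,m')$, and dominated convergence over $(m,m')$ (which your truncation-in-$R$ argument reproduces by hand) handles the series. The ``main obstacle'' you flag is, however, a misreading of the setup. The series \eqref{eq:AggbepsBen}--\eqref{eq:AggbBen} already carry the explicit weight $(\jn m\jn{m'})^{-2d}$ \emph{outside} of $\tilde{\cA}_{b,\epsilon}$, and Parseval together with \eqref{eq:tildeagammagammaprimemmprimeNestimate} gives $\norm{\tilde{\cA}_{b,\epsilon}h}_{L^2(\Omega)}\leq C\jn{\gamma-\gamma'}^{4d}\norm{h}_{L^2}$ uniformly in $m,m'$ and $\epsilon\in[0,1]$; hence $(\jn m\jn{m'})^{-2d}\,C\jn{\gamma-\gamma'}^{4d}\norm h$ is already a summable, $\epsilon$-independent majorant and the dominated convergence in $(m,m')$ goes through immediately. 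There is no need to go back into the integration-by-parts bookkeeping, to extract a refined decay $\jn m^{-d-1}\jn{m'}^{-d-1}$ from $\tilde a_b$, or to track weights in the proof of \eqref{eq:prop1}: the $(\jn m\jn{m'})^{2d}$ factor absorbed into the definition of $\tilde a_b$ is designed precisely to cancel the weight outside, so that the uniform $L^\infty$ bound on $\tilde a_b$ suffices and the convergence of $\sum_{m,m'}(\jn m\jn{m'})^{-2d}$ does the rest.
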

\begin{proof}
Suppose that $h\in C^\infty_0(\Omega)$. From \eqref{eq:AggbepsBen} and \eqref{eq:AggbBen} it suffices to consider the operators $\tilde{\cA}_{b,\epsilon}-\tilde{\cA}_{b,0}$ for all $\gamma,\gamma',m,m'\in \Z^d$. Applying Parseval's identity once gives
\begin{align*}
\norm{(\cA_{\gamma\gamma',b,\epsilon}-\cA_{\gamma\gamma',b})h}_{L^2(\Omega)}\leq\sum_{m,m'\in\Z^d} \frac{1}{(\jn{m}\jn{m'})^{2d}} \norm{\tilde{a}_b(*) (\e^{-\epsilon\jn{*}}-1)\sF\Big(\e^{\I m'\cdot(\cdot)}h(\cdot)\Big)(*)}_{L^2(\Omega)}.
\end{align*}
Using Parseval's identity again shows that the $L^2$-norm appearing on the right hand side is bounded by a constant which is independent of $m$, $m'$ and $\epsilon$. Therefore it is enough to prove that this norm goes to $0$ with $\epsilon$ for a fixed $m$ and $m'$, which follows by an application of Lebesgue's dominated convergence theorem.
\end{proof}

To construct the operators $H_b$ we need the following general lemma on generalized matrices of operators.
\begin{lemma}\label{lem:directsumofoperatorsbounded}
	Suppose that there exists a constant $C$ and operators $(T_{\gamma,\gamma'})_{\gamma,\gamma'\in \Z^d}\subset B(L^2(\Omega))$ such that
	\begin{align*}
	\norm{T_{\gamma,\gamma'}f}_{L^2(\Omega)}\leq \frac{C\norm{f}_{L^2(\Omega)}}{\jn{\gamma-\gamma'}^{2d}},
	\end{align*}
	for every $\gamma,\gamma'\in \Z^d$ and $f\in C^\infty_0(\Omega)$. Then $T=\{T_{\gamma,\gamma'}\}_{\gamma,\gamma'\in \Z^d}$ is a bounded operator on $\sH$ with
	\begin{align*}
	\norm{T}\leq \sum_{\gamma\in \Z^d} \frac{C}{\jn{\gamma}^{2d}}. 
	\end{align*}
\end{lemma}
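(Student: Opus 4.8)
The plan is to establish boundedness of the generalized matrix $T = \{T_{\gamma,\gamma'}\}$ by a Schur-test type argument, exploiting the off-diagonal decay $\jn{\gamma-\gamma'}^{-2d}$ together with the summability $\sum_{\gamma\in\Z^d}\jn{\gamma}^{-2d}<\infty$, which holds precisely because the exponent is $2d$ on $\Z^d$. First I would take an arbitrary $f = (f_\gamma)_{\gamma\in\Z^d}\in\sH$ with each $f_\gamma\in C^\infty_0(\Omega)$ (a dense subspace), and estimate $\norm{(Tf)_\gamma}_{L^2(\Omega)} = \bnorm{\sum_{\gamma'} T_{\gamma,\gamma'}f_{\gamma'}}_{L^2(\Omega)} \le \sum_{\gamma'} \frac{C}{\jn{\gamma-\gamma'}^{2d}}\norm{f_{\gamma'}}_{L^2(\Omega)}$ by the triangle inequality and the hypothesis.

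Next I would apply the Cauchy--Schwarz inequality to this convolution-type sum in the standard Schur way: writing $\frac{C}{\jn{\gamma-\gamma'}^{2d}} = \big(\frac{C^{1/2}}{\jn{\gamma-\gamma'}^{d}}\big)\cdot\big(\frac{C^{1/2}}{\jn{\gamma-\gamma'}^{d}}\big)$ and pairing one factor with $\norm{f_{\gamma'}}_{L^2(\Omega)}$, one gets
\begin{align*}
\norm{(Tf)_\gamma}_{L^2(\Omega)}^2 \le \Big(\sum_{\gamma'\in\Z^d}\frac{C}{\jn{\gamma-\gamma'}^{2d}}\Big)\Big(\sum_{\gamma'\in\Z^d}\frac{C}{\jn{\gamma-\gamma'}^{2d}}\norm{f_{\gamma'}}_{L^2(\Omega)}^2\Big).
\end{align*}
The first factor equals $S := \sum_{\delta\in\Z^d} \frac{C}{\jn{\delta}^{2d}}$ after the change of variables $\delta = \gamma-\gamma'$, and this is finite. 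Then I would sum over $\gamma\in\Z^d$, interchange the order of summation in the double sum (everything is nonnegative, so Tonelli applies), and use again that $\sum_{\gamma}\frac{C}{\jn{\gamma-\gamma'}^{2d}} = S$ for each fixed $\gamma'$, obtaining $\norm{Tf}_{\sH}^2 = \sum_\gamma \norm{(Tf)_\gamma}_{L^2(\Omega)}^2 \le S \cdot S \cdot \sum_{\gamma'}\norm{f_{\gamma'}}_{L^2(\Omega)}^2 = S^2\norm{f}_{\sH}^2$, hence $\norm{Tf}_{\sH}\le S\norm{f}_{\sH} = \big(\sum_{\gamma\in\Z^d}\frac{C}{\jn{\gamma}^{2d}}\big)\norm{f}_{\sH}$.

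Finally, since $C^\infty_0(\Omega)$ is dense in $L^2(\Omega)$, the subspace of $(f_\gamma)\in\sH$ with all $f_\gamma\in C^\infty_0(\Omega)$ and only finitely many nonzero is dense in $\sH$; the estimate above extends $T$ by continuity to a bounded operator on all of $\sH$ with the claimed norm bound, and the identification with an operator on $\ell^2(\Z^d;L^2(\Omega))$ is immediate. I do not anticipate a serious obstacle here: the only mild point to be careful about is justifying the interchange of summations and the passage from the dense subspace to all of $\sH$, but both are routine since all terms are nonnegative (Tonelli) and the bound is uniform. This is essentially the Schur--Holmgren lemma in the setting of operator-valued matrix entries, so the argument is a direct adaptation.
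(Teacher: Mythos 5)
Your proof is correct and takes essentially the same approach as the paper: both reduce the operator-valued problem to an $\ell^2(\Z^d)$ estimate for the scalar comparison kernel $C\jn{\gamma-\gamma'}^{-2d}$ via the triangle inequality. The only cosmetic difference is that the paper cites the Schur--Holmgren lemma for the scalar operator $S_{\gamma,\gamma'}=C\jn{\gamma-\gamma'}^{-2d}$ on $\ell^2(\Z^d)$, whereas you unfold that estimate explicitly with the standard Cauchy--Schwarz splitting.
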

\begin{proof}
Let $f \in \{(f_\gamma) \in \sH \mid f_\gamma \in C^\infty_0(\Omega)\}$ and $S \colon \ell^2(\Z^d) \to \ell^2(\Z^d)$ an operator with matrix elements
\begin{align*}
S_{\gamma,\gamma'} = \frac{C}{\jn{\gamma-\gamma'}^{2d}}.
\end{align*}
Using a Schur-Holmgren estimate we get that $S$ is bounded and $\norm{S}\leq \sum_{\gamma\in \Z^d} \frac{C}{\jn{\gamma}^{2d}}$. Then:
\begin{align*}
\norm{Tf}_\sH^2 = \sum_{\gamma \in \Z^d} \Bnorm{\sum_{\gamma' \in \Z^d} T_{\gamma,\gamma'}f_{\gamma'}}_{L^2(\Omega)}^2 \leq \sum_{\gamma \in \Z^d} \Big(\sum_{\gamma' \in \Z^d} S_{\gamma,\gamma'}\norm{f_{\gamma'}}_{L^2(\Omega)}  \Big)^2 \leq \norm{S}^2\norm{f}_{\sH}^2.
\end{align*}
Since $T$ is linear and bounded on a dense set, it can be extended to the whole space $\sH$.
\end{proof}

By~\eqref{eq:Aggbdecay} and Lemma~\ref{lem:directsumofoperatorsbounded} we obtain that
\begin{align*}
H_b\coloneqq \{\e^{\I b \phi(\gamma,\gamma')} \cA_{\gamma\gamma',b}\}_{\gamma,\gamma'\in \Z^d}
\end{align*}
is a bounded operator on $L^2(\R^d)$. Combining Lemma~\ref{lem:directsumofoperatorsbounded} with Lemma~\ref{lem:3} also gives the following corollary.
\begin{corollary}	\label{cor:1}
The operators $\cA_{b,\epsilon}$ are uniformly bounded for $\epsilon\in\,\, ]0,1]$.
\end{corollary}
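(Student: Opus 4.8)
The goal is to prove Corollary~\ref{cor:1}: that the operators $\cA_{b,\epsilon} = U_b \Op(a_{b,\epsilon}) U_b^*$ are uniformly bounded for $\epsilon \in\,\,]0,1]$. The natural plan is to combine the two lemmas that immediately precede it, namely the norm estimate from Lemma~\ref{lem:3} (with the parameters $b = b'$, so only~\eqref{eq:prop1} is relevant) and the abstract Schur-Holmgren-type boundedness criterion of Lemma~\ref{lem:directsumofoperatorsbounded}.

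Here is how I would carry it out. First, recall from~\eqref{eq:AbepsdirectsumBen} that $\cA_{b,\epsilon}$ is a generalized matrix with entries $\e^{\I b \phi(\gamma,\gamma')} \cA_{\gamma\gamma',b,\epsilon}$. Since $\abs{\e^{\I b\phi(\gamma,\gamma')}} = 1$, these entries have the same operator norm as $\cA_{\gamma\gamma',b,\epsilon}$. Next, apply Lemma~\ref{lem:3} with $N$ chosen large enough that $4d - 2N \leq -2d$, e.g.\ $N = 3d$: this yields a constant $C$, independent of $\gamma,\gamma',b \in [0,b_{\max}]$ and $\epsilon \in [0,1]$, such that
\begin{align*}
\norm{\cA_{\gamma\gamma',b,\epsilon} h}_{L^2(\Omega)} \leq \frac{C \norm{h}_{L^2(\Omega)}}{\jn{\gamma-\gamma'}^{2d}}
\end{align*}
for all $h \in C^\infty_0(\Omega)$. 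The multiplication by the unimodular phase does not affect this bound, so the family $(\e^{\I b\phi(\gamma,\gamma')}\cA_{\gamma\gamma',b,\epsilon})_{\gamma,\gamma'}$ satisfies the hypothesis of Lemma~\ref{lem:directsumofoperatorsbounded} with this same constant $C$. Applying that lemma gives
\begin{align*}
\norm{\cA_{b,\epsilon}} \leq \sum_{\gamma \in \Z^d} \frac{C}{\jn{\gamma}^{2d}},
\end{align*}
and since $\sum_{\gamma \in \Z^d} \jn{\gamma}^{-2d} < \infty$ (the summand decays like $\abs{\gamma}^{-2d}$, which is summable over $\Z^d$), the right-hand side is a finite constant independent of $\epsilon \in\,\,]0,1]$ and of $b \in [0,b_{\max}]$. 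This establishes the uniform bound.

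There is no real obstacle here: the substance of the argument has already been done in Lemma~\ref{lem:3} and Lemma~\ref{lem:directsumofoperatorsbounded}, and the corollary is essentially a bookkeeping step that records their combination. The only point requiring a little care is checking that the constant produced by Lemma~\ref{lem:3} is genuinely uniform in $b$ and $\epsilon$ — but this is explicit in the statement of that lemma (the estimate~\eqref{eq:prop1} holds for all $b \in [0,b_{\max}]$ and all $\epsilon \in [0,1]$, with $C_N$ not depending on either), so nothing new needs to be verified. One should also note that $\cA_{b,\epsilon}$ was a priori only defined via~\eqref{eq:AbepsdirectsumBen} through its action on vectors with components in $C^\infty_0(\Omega)$; the uniform estimate together with density then shows it extends to a bounded operator on $\sH$ with the stated uniform norm bound. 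This remark dovetails with the next section, where the strong limit $H_b$ of the $\cA_{b,\epsilon}$ is identified with $U_b \Op(a_b) U_b^*$.
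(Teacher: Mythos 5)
Your proof is correct and follows exactly the paper's intended route: combine the uniform norm estimate~\eqref{eq:prop1} from Lemma~\ref{lem:3} (with $N$ large enough to get $\jn{\gamma-\gamma'}^{-2d}$ decay) with the abstract boundedness criterion of Lemma~\ref{lem:directsumofoperatorsbounded}. The paper itself does not spell this out, stating only that the corollary follows from combining these two lemmas, so your write-up simply makes the bookkeeping explicit.
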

Next we prove that $H_b$ is the strong limit of $\cA_{b,\epsilon}$ as $\epsilon\to 0$.
\begin{proposition}\label{prop:strongconvergence1}
	The operators $\cA_{b,\epsilon}$ converge strongly to $H_b$ as $\epsilon$ goes to zero.
\end{proposition}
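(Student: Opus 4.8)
The plan is to prove strong convergence by a standard $2\epsilon/3$-type argument, combining the uniform boundedness from Corollary~\ref{cor:1} with the entry-wise strong convergence from Lemma~\ref{lem:ben1}. First I would fix a vector $f=(f_\gamma)_{\gamma\in\Z^d}\in\sH$ and an arbitrary $\delta>0$. Since vectors with only finitely many nonzero components, each lying in $C^\infty_0(\Omega)$, form a dense subset of $\sH$, I would pick such a $g=(g_\gamma)_{\gamma\in\Z^d}$ with $\norm{f-g}_\sH$ as small as desired. Writing
\begin{align*}
\cA_{b,\epsilon}f-H_bf=\cA_{b,\epsilon}(f-g)+(\cA_{b,\epsilon}-H_b)g+H_b(g-f),
\end{align*}
the first and third terms are controlled in norm by $(\sup_{\epsilon\in\,]0,1]}\norm{\cA_{b,\epsilon}}+\norm{H_b})\norm{f-g}_\sH$, which is finite by Corollary~\ref{cor:1} and the boundedness of $H_b$ established via Lemma~\ref{lem:directsumofoperatorsbounded}; this can be made smaller than $2\delta/3$ by the choice of $g$.

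It then remains to show that $\norm{(\cA_{b,\epsilon}-H_b)g}_\sH\to 0$ as $\epsilon\to 0$ for this fixed $g$ with finitely many nonzero, smooth, compactly supported components. Here I would use the generalized matrix structure: by~\eqref{eq:AbepsdirectsumBen} and the definition of $H_b$,
\begin{align*}
((\cA_{b,\epsilon}-H_b)g)_\gamma=\sum_{\gamma'\in\Z^d}\e^{\I b\phi(\gamma,\gamma')}(\cA_{\gamma\gamma',b,\epsilon}-\cA_{\gamma\gamma',b})g_{\gamma'}.
\end{align*}
For each fixed $\gamma$ this is a finite sum over the (finitely many) $\gamma'$ for which $g_{\gamma'}\neq 0$, and Lemma~\ref{lem:ben1} gives $\norm{(\cA_{\gamma\gamma',b,\epsilon}-\cA_{\gamma\gamma',b})g_{\gamma'}}_{L^2(\Omega)}\to 0$; the phase factors have modulus one, so $\norm{((\cA_{b,\epsilon}-H_b)g)_\gamma}_{L^2(\Omega)}\to 0$ for each $\gamma$. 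To pass from this to convergence of the $\sH$-norm I need a uniform-in-$\epsilon$ summable majorant over $\gamma$: using the decay estimate~\eqref{eq:prop1} of Lemma~\ref{lem:3} with $N$ chosen so that $4d-2N\leq -2d-1$ (say), one gets $\norm{(\cA_{\gamma\gamma',b,\epsilon}-\cA_{\gamma\gamma',b})g_{\gamma'}}_{L^2(\Omega)}\leq C_N\jn{\gamma-\gamma'}^{-2d-1}\norm{g_{\gamma'}}_{L^2(\Omega)}$ uniformly in $\epsilon\in[0,1]$, hence $\norm{((\cA_{b,\epsilon}-H_b)g)_\gamma}_{L^2(\Omega)}^2$ is dominated by a fixed $\ell^1(\Z^d)$ sequence in $\gamma$ (the $\gamma'$-sum being finite). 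Dominated convergence for series then yields $\norm{(\cA_{b,\epsilon}-H_b)g}_\sH\to 0$, so this term is eventually below $\delta/3$, and combining the three bounds finishes the proof.

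The only mildly delicate point — the ``main obstacle,'' though it is a mild one — is the interchange of the limit $\epsilon\to 0$ with the outer sum over $\gamma\in\Z^d$; this is exactly what the uniform decay bound~\eqref{eq:prop1} is for, and since $g$ has compact support in the lattice variable the inner $\gamma'$-sum causes no trouble. Everything else is the routine density-plus-uniform-boundedness scheme. One could alternatively phrase the whole argument as: strong convergence on the dense set $\{(f_\gamma)\in\sH\mid f_\gamma\in C^\infty_0(\Omega),\ f_\gamma=0\text{ for all but finitely many }\gamma\}$ plus uniform boundedness (Corollary~\ref{cor:1}) implies strong convergence on all of $\sH$, which is a general principle; I would likely present it in that compact form.
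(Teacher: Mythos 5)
Your argument is correct and is essentially the paper's own proof: the paper's proof of Proposition~\ref{prop:strongconvergence1} likewise first establishes strong convergence on the dense set $\sH_0^\infty$ of finitely supported, smooth-component vectors (using the entry-wise convergence of Lemma~\ref{lem:ben1} together with the uniform decay estimate~\eqref{eq:prop1} to justify interchanging the limit with the sum over $\gamma$), and then invokes density plus the uniform boundedness from Corollary~\ref{cor:1}. Your write-up merely spells out the dominated-convergence step more explicitly than the paper does.
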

\begin{proof}
First one shows the strong convergence for elements in the set
\begin{align*}
\sH_0^\infty\coloneqq \{(f_\gamma)\in \sH\mid f_\gamma\in C^\infty_0(\Omega)\textup{ and } f_\gamma\neq 0 \textup{ for only finitely many }\gamma\in \Z^d \}
\end{align*}
by using \eqref{eq:prop1} and Lemma~\ref{lem:ben1}. Second one uses that $\sH_0^\infty$ is dense in $\sH$, and that the operators $\cA_{b,\epsilon}$ are uniformly bounded in $\epsilon$ to complete the proof.
\end{proof}

Finally, we are ready to show that $\Op(a_b)$ has a continuous extension on $L^2(\R^d)$. By Proposition~\ref{prop:strongconvergence1} it follows that $U_b^*H_bU_b$ is the strong limit of $\Op(a_{b,\epsilon})$ and since using Lebesgue's dominated convergence theorem in the definition of $\Op(a_{b,\epsilon})$ gives
\begin{align*}
\lim_{\epsilon\to 0} \pair{\Op(a_{b,\epsilon})f}{g}=\pair{\Op(a_b)f}{g},
\end{align*}
for every $f,g\in \SR$ it follows that $U_b^* H_b U_b$ is a continuous extension of $\Op(a_b)$ to $L^2(\R^d)$.
\begin{remark}\label{rem:1<<<<<<<<<<<<<<<<}
	Note that if we had used a general magnetic symbol like in~\eqref{eq:symbol} with $M\geq 0$ then the estimate in~\eqref{eq:agammagammaprimebbound} would be on the form
	\begin{align*}
\babss{\partial_{x}^\alpha\partial_{x'}^{\alpha'}\partial_{\xi}^{\beta}\Big(g(x)g(x')a(x+\gamma,x'+\gamma',\xi)\e^{\I b \fl_{\gamma,\gamma'}(x,x')}\Big)}&\leq C_{\alpha,\alpha',\beta}\jn{\gamma-\gamma'}^{M+\abs{\alpha}+\abs{\alpha'}},
	\end{align*}
	for $x,x'\in \tilde{\Omega}$. Thus the Fourier coefficient obeys
	\begin{align*}
	\abs{\partial_{\xi}^{\beta}\tilde{a}_{b}(\xi)}&\leq C_{\beta}\jn{\gamma-\gamma'}^{4d+M},
	\end{align*}
	instead of~\eqref{eq:tildeagammagammaprimemmprimeNestimate}. The subsequent part of the proof would then follow in exactly the same way with only minor changes e.g.\ replacing $4d$ with $4d+M$.
\end{remark}

\section{Proof of Theorem~\ref{thm:main}(2)}\label{sec:proof2}
In order to prove the second part of Theorem~\ref{thm:main} we introduce the following notation. Define 
\begin{align*}
V_{t}\coloneqq \begin{cases}
\{(\gamma,\gamma')\in \Z^{2d}\mid \abs{\gamma-\gamma'}< \abs{t}^{-1/2}\},& t\neq 0,\\
\Z^d,&t =0.
\end{cases}
\end{align*}
Furthermore, for $s,t\in \R$ define
\begin{align}\label{eq:Hbnotation}
H^{s}_{t,b}\coloneqq \{\e^{\I(b+s) \phi(\gamma,\gamma')}\cA_{\gamma\gamma',b}\}_{(\gamma,\gamma')\in V_t}.
\end{align}
If $s$ or $t$ is $0$ then we omit them in the above notation. Recall that for this part of the proof we assume
\begin{align*}
\overline{a(x,x',\xi)}=a(x',x,\xi),
\end{align*}
for all $x,x',\xi\in \R^d$. This is a sufficient condition for $H_{t,b}^s$ to be self-adjoint for every $s,t\in \R$ and every $b\in [0,b_{\rm max}]$. 

An important result~\cite[Chapter V-§4 theorem 4.10]{Ka} for proving Theorem~\ref{thm:main}(2) is that if $S$ and $T$ are bounded and self-adjoint operators on a Hilbert space then
\begin{align}\label{eq:lax}
\dH(\sigma(S),\sigma(T))\leq \norm{S-T}.
\end{align}

Our strategy to prove~\eqref{eq:main3} is to show that there exists a constant $C$ such that if $b_0\in [0,b_{\rm max}]$ is arbitrary and $\delta b$ satisfies $b_0+\delta b\in [0,b_{\rm max}]$ then
\begin{align}
\dH(\sigma(H_{b_0+\delta b}),\sigma(H_{b_0}^{\delta b}))&\leq C\abs{\delta b}, \label{eq:part21}\\
\dH(\sigma(H_{b_0}^{\delta b}),\sigma(H_{\delta b,b_0}^{\delta b}))&\leq C\abs{\delta b}, \label{eq:part22}\\
\dH(\sigma(H_{\delta b,b_0}^{\delta b}),\sigma(H_{\delta b,b_0}))&\leq C\abs{\delta b}^{1/2}, \label{eq:part24}\\
\dH(\sigma(H_{\delta b,b_0}),\sigma(H_{b_0}))&\leq C\abs{\delta b}. \label{eq:part23}
\end{align}
Since $\abs{\delta b}\in [0,b_{\rm max}]$ the triangle inequality would then imply~\eqref{eq:main3}. Note that the constant $C$ will depend on $b_{\rm max}$. For the rest of this section let $b_0\in [0,b_{\rm max}]$ be arbitrary and let $\delta b$ be sufficiently small.

For the inequality~\eqref{eq:part21} note that
\begin{align*}
H_{b_0+\delta b}-H_{b_0}^{\delta b}=\{\e^{\I(b_0+\delta b) \phi(\gamma,\gamma')}(\cA_{\gamma\gamma',b_0+\delta b}-\cA_{\gamma\gamma',b_0})\}_{\gamma,\gamma'\in \Z^d}.
\end{align*}
Thus it follows from Lemma~\ref{lem:directsumofoperatorsbounded},~\eqref{eq:AggbLipschitz} and~\eqref{eq:lax} that there exists $C$ not depending on $b_0$ or $\delta b$ such that
\begin{align}\label{eq:KS1}
\dH(\sigma(H_{b_0+\delta b}),\sigma(H_{b_0}^{\delta b}))\leq \norm{H_{b_0+\delta b}-H_{b_0}^{\delta b}}\leq C\abs{\delta b}.
\end{align}
The proofs of~\eqref{eq:part22} and~\eqref{eq:part23} are similar hence we only do it for~\eqref{eq:part22}. Clearly,
\begin{align*}
H_{b_0}^{\delta b}-H_{\delta b,b_0}^{\delta b}= \{\e^{\I (b_0+\delta b) \phi(\gamma,\gamma')}\cA_{\gamma\gamma',b_0}\}_{(\gamma,\gamma')\notin V_{\delta b}},
\end{align*}
thus by defining $\tilde{V}_{\delta b}=\Z^d\cap B_{\abs{\delta b}^{-1/2}}(0)$ it follows from Lemma~\ref{lem:directsumofoperatorsbounded},~\eqref{eq:Aggbdecay} and~\eqref{eq:lax} that
\begin{align}\label{eq:b1}
\dH(\sigma(H_{b_0}^{\delta b}),\sigma(H_{\delta b,b_0}^{\delta b}))\leq\norm{H_{b_0}^{\delta b}-H_{\delta b,b_0}^{\delta b}}\leq\sum_{\gamma\notin \tilde{V}_{\delta b}} C\jn{\gamma}^{-2d}.
\end{align}
It is possible to find a constant $C$ such that for all $\gamma\in \Z^d$ we have
\begin{align*}
\jn{\gamma}^{-2d}\leq C\jn{x}^{-2d},
\end{align*}
for all $x\in \gamma+\Omega$. If we dominate the sum in \eqref{eq:b1} by the integral of $C\jn{x}^{-2d}$ and switch to polar coordinates we obtain
\begin{align*}
\dH(\sigma(H_{b_0}^{\delta b}),\sigma(H_{\delta b,b_0}^{\delta b}))\leq \sum_{\gamma\notin \tilde{V}_{\delta b}} C\jn{\gamma}^{-2d}\leq C\int_{\abs{\delta b}^{-1}/2}^\infty \frac{r^{d-1}}{\jn{r}^{2d}}\d r\leq C\abs{\delta b}
\end{align*}
for sufficiently small $\delta b$.
\subsection{Strategy for the proof of \texorpdfstring{\eqref{eq:part24}}{(3.5)}}
The proof of~\eqref{eq:part24} is more involved than the other three estimates since it is not possible in general to bound $\norm{H_{\delta b,b_0}^{\delta b}-H_{\delta b,b_0}}$ by a constant multiple of $\abs{\delta b}$. Our strategy is to prove the following two  results:
\begin{lemma}\label{lem:hausdorffdistancelemma}
	There exists a constant $C>0$ such that if $\dist(z,\sigma(H_{\delta b,b_0}))>C\abs{\delta b}^{1/2}$ then $z\in \rho(H_{\delta b,b_0}^{\delta b})$.
\end{lemma}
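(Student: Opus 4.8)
The plan is to control the resolvent of $H_{\delta b, b_0}^{\delta b}$ by comparing it with that of $H_{\delta b, b_0}$, exploiting that these two operators differ only in the Peierls phase factor: $H_{\delta b, b_0}^{\delta b}$ has entries $\e^{\I(b_0+\delta b)\phi(\gamma,\gamma')}\cA_{\gamma\gamma',b_0}$ while $H_{\delta b, b_0}$ has entries $\e^{\I b_0 \phi(\gamma,\gamma')}\cA_{\gamma\gamma',b_0}$, and in both cases the index pairs are restricted to $V_{\delta b}$, i.e. $|\gamma - \gamma'| < |\delta b|^{-1/2}$. First I would write $D := H_{\delta b, b_0}^{\delta b} - H_{\delta b, b_0}$, so that $D$ is the generalized matrix with entries $\e^{\I b_0\phi(\gamma,\gamma')}(\e^{\I \delta b\, \phi(\gamma,\gamma')} - 1)\cA_{\gamma\gamma',b_0}$ for $(\gamma,\gamma') \in V_{\delta b}$ and zero otherwise. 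The key observation is that on $V_{\delta b}$, by \eqref{eq:phiineq} we have $|\phi(\gamma,\gamma')| \leq C|\gamma||\gamma'|$, but this alone is not small; what is small is the \emph{difference} of phases along short hops. However, the cleaner route — and the one matching the ``geometric perturbation theory'' remark — is not to bound $\norm{D}$ directly but rather to estimate $\norm{D(H_{\delta b, b_0} - z)^{-1}}$ and show it is $< 1$ for $z$ at distance $> C|\delta b|^{1/2}$ from $\sigma(H_{\delta b, b_0})$, whence $z \in \rho(H_{\delta b, b_0}^{\delta b})$ by a Neumann series.

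For this, the main point is that $D$ factors through a phase that, while not uniformly small as an operator, can be paired against the localization of the resolvent. The standard trick (cf. \cite{CP-1}) is: for a fixed lattice site, conjugate by a gauge that trivializes $\e^{\I b_0 \phi}$ locally. Concretely, fix $\gamma_0 \in \Z^d$ and introduce the unitary multiplication operator $W_{\gamma_0}$ on $\sH$ acting on the $\gamma$-component by the scalar $\e^{\I(b_0+\delta b)\phi(\gamma,\gamma_0)}$; then $W_{\gamma_0}^* H_{\delta b, b_0}^{\delta b} W_{\gamma_0}$ has entries $\e^{\I(b_0+\delta b)(\phi(\gamma,\gamma') - \phi(\gamma,\gamma_0) + \phi(\gamma',\gamma_0))}\cA_{\gamma\gamma',b_0} = \e^{\I(b_0+\delta b)\mathfrak{f}(\gamma,\gamma_0,\gamma')}\cA_{\gamma\gamma',b_0}$ using property (2)–(3) of $\phi$. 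By \eqref{eq:fluxestimate}, $|\mathfrak{f}(\gamma,\gamma_0,\gamma')| \leq C\Delta(\gamma,\gamma_0,\gamma')$, and when both $\gamma$ and $\gamma'$ lie within distance $|\delta b|^{-1/2}$ of $\gamma_0$ (which, combined with $|\gamma-\gamma'| < |\delta b|^{-1/2}$, is where the resolvent kernel is effectively supported after accounting for the decay \eqref{eq:Aggbdecay}), the triangle area is $\leq C|\delta b|^{-1}$, so $\delta b \cdot \mathfrak{f}$ is $O(1)$ — still not small, but bounded. The genuinely useful estimate is obtained by instead localizing: partition $\Z^d$ into blocks of side length $\sim |\delta b|^{-1/2}$, and on each block the gauge-transformed phase difference between $H^{\delta b}$ and $H$ is controlled by $\delta b \cdot |\gamma - \gamma'| \cdot (\text{block size}) \lesssim |\delta b| \cdot |\delta b|^{-1/2} \cdot |\delta b|^{-1/2}$... which is again $O(1)$; so the honest mechanism must use the $1/2$-power and an interpolation/commutator bound.

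The cleanest honest argument, which I would adopt, is the following: write $\delta b \, \phi(\gamma,\gamma') = \delta b\,\phi(\gamma,\gamma') $ and note $\e^{\I \delta b \phi(\gamma,\gamma')} - 1$ where, crucially, the resolvent $R := (H_{\delta b, b_0} - z)^{-1}$ inherits from \eqref{eq:Aggbdecay} and a Combes–Thomas / Schur-type estimate an exponential-type off-diagonal decay with a rate that degrades like $\dist(z,\sigma)^{-1}$; meanwhile $D$ only connects sites with $|\gamma-\gamma'| < |\delta b|^{-1/2}$ and carries a factor bounded by $|\delta b|\,|\phi(\gamma,\gamma')| \leq C|\delta b|\,|\gamma||\gamma'|$. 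Performing the Schur-Holmgren estimate for the product $DR$ and grouping, one finds the critical scale is exactly $|\gamma-\gamma_0| \sim |\delta b|^{-1/2}$: beyond it the resolvent decay wins, within it the factor $|\delta b|\,|\gamma||\gamma'| \lesssim |\delta b|\cdot|\delta b|^{-1}= 1$ combined with summing over a ball of radius $|\delta b|^{-1/2}$ against the $\jn{\gamma-\gamma'}^{-N}$ decay of $\cA$ yields a total of order $|\delta b|^{1/2} \cdot \dist(z,\sigma(H_{\delta b,b_0}))^{-1}$. Hence if $\dist(z,\sigma(H_{\delta b,b_0})) > C|\delta b|^{1/2}$ with $C$ large enough, $\norm{DR} < 1$, the Neumann series $\sum_{k\geq 0}(-DR)^k$ converges, $(H_{\delta b,b_0}^{\delta b} - z)^{-1} = R\sum_{k\geq 0}(-DR)^k$ exists, and $z \in \rho(H_{\delta b, b_0}^{\delta b})$.

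I expect the main obstacle to be making rigorous the off-diagonal decay of the resolvent $R = (H_{\delta b, b_0}-z)^{-1}$ with the correct $\dist(z,\sigma)^{-1}$-dependence in the rate — this is a Combes–Thomas-type estimate for generalized (operator-valued) Hofstadter matrices, and one must check that conjugating $H_{\delta b, b_0}$ by $\gamma \mapsto \e^{\lambda \cdot \gamma}$ (coordinatewise multiplication) perturbs it by an operator of norm $O(\lambda \cdot |\delta b|^{-1/2})$ — the $|\delta b|^{-1/2}$ coming from the truncation to $V_{\delta b}$ — so that choosing $\lambda \sim |\delta b|^{1/2}\dist(z,\sigma)$ gives both a useful decay rate and keeps the perturbation below $\tfrac12 \dist(z,\sigma)$. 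Balancing these two uses of $|\delta b|^{1/2}$ is the delicate bookkeeping; once the decay of $R$ is in hand, the pairing with $D$ and the Neumann series are routine. I would structure the proof as: (i) Combes–Thomas decay for $R$ with rate $\sim |\delta b|^{1/2}\dist(z,\sigma(H_{\delta b,b_0}))$; (ii) Schur-Holmgren bound on $DR$ using \eqref{eq:phiineq}, \eqref{eq:Aggbdecay} and the support restriction $|\gamma-\gamma'|<|\delta b|^{-1/2}$; (iii) conclude $\norm{DR}<1$ hence $z \in \rho(H_{\delta b,b_0}^{\delta b})$ via Neumann series.
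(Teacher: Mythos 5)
Your final Neumann-series argument has a genuine gap, and it is in the step where you estimate the perturbation $D := H^{\delta b}_{\delta b,b_0} - H_{\delta b, b_0}$. You claim ``within [the critical scale] the factor $|\delta b|\,|\gamma||\gamma'| \lesssim |\delta b|\cdot|\delta b|^{-1}= 1$'' — but this mistakes the restriction $|\gamma-\gamma'| < |\delta b|^{-1/2}$ for a restriction on $|\gamma|$ and $|\gamma'|$ individually. The entries of $D$ live on $V_{\delta b} = \{|\gamma-\gamma'| < |\delta b|^{-1/2}\}$, which contains arbitrarily distant pairs $(\gamma,\gamma')$ with $\gamma, \gamma' \to \infty$; for such pairs $\delta b\,\phi(\gamma,\gamma') \sim \delta b\,|\gamma|^2 \to \infty$, the small-angle bound $|\e^{\I y} - 1| \leq |y|$ is useless, and $\|D_{\gamma,\gamma'}\|$ is only bounded by $2\|\cA_{\gamma\gamma',b_0}\|$, i.e.\ $O(1)$ near the diagonal \emph{uniformly in the magnetic field}. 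Hence $\|D\|$ is not $O(|\delta b|^{1/2})$, and the Combes--Thomas decay of $R$ does not rescue $\|DR\|$, because the problem is the unsuppressed size of $D$ near the diagonal at large $\gamma$, not the off-diagonal decay of $R$. This is precisely why the naive resolvent-perturbation route fails and a localization scheme is required.

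You in fact touched the correct mechanism in your second paragraph and then dropped it too early. Conjugating by the local gauge $\e^{\pm\I\delta b\,\phi(\gamma, \gamma_0)}$ replaces $\delta b\,\phi(\gamma,\gamma')$ by the flux $\delta b\,\fl(\gamma,\gamma_0,\gamma')$, which by \eqref{eq:fluxestimate} is bounded by $C|\delta b|\,|\gamma-\gamma'|\cdot|\gamma'-\gamma_0|$. You estimated $|\gamma-\gamma'|\cdot|\gamma'-\gamma_0| \lesssim |\delta b|^{-1}$ and concluded ``$O(1)$, not small'', but that is only the worst case over the support of the resolvent kernel. What the paper actually exploits is the factorization $|\delta b|\,\fl \lesssim |\delta b|^{1/2}\,|\gamma-\gamma'|$ (using $|\gamma'-\gamma_0|\lesssim|\delta b|^{-1/2}$); the remaining factor $|\gamma-\gamma'|$ is then absorbed by the rapid decay \eqref{eq:Aggbdecay} of $\cA_{\gamma\gamma',b_0}$ in a Schur-type estimate, leaving a net $|\delta b|^{1/2}$. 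To make the gauge-trivialization work at all $\gamma$ — not just near one fixed $\gamma_0$ — the paper replaces the single conjugation by a partition of unity $\{g_{n,\delta b}\}$ at scale $|\delta b|^{-1/2}$ (so that $\sum_n g^2_{n,\delta b}\equiv 1$) and builds the parametrix $S_z := W_{\delta b}\bigl((H_{\delta b,b_0}-z)^{-1}\bigr)$ with $W_{\delta b}(R) = \{\sum_n \e^{\I\delta b\phi(\gamma, n|\delta b|^{-1/2})} g_{n,\delta b}(\gamma) R_{\gamma,\gamma'} \e^{-\I\delta b\phi(\gamma', n|\delta b|^{-1/2})} g_{n,\delta b}(\gamma')\}$. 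The two error terms are then the flux error just described and a ``derivative-of-partition'' error, $|g_{n,\delta b}(\gamma)-g_{n,\delta b}(\gamma'')| \leq C|\delta b|^{1/2}|\gamma-\gamma''|$, both of the same $|\delta b|^{1/2}$ order. This produces $(H^{\delta b}_{\delta b,b_0}-z)S_z = \id + T_z$ with $\|T_z\|\leq C|\delta b|^{1/2}/\dist(z,\sigma(H_{\delta b,b_0}))$, after which the conclusion follows by Neumann as you intended. So: the outline ``build an approximate inverse, show the remainder is $O(|\delta b|^{1/2}/\dist)$, invert'' is correct, but the approximate inverse must be the \emph{gauge-localized} resolvent $W_{\delta b}(R)$, not $R$ itself, and the smallness comes from the flux-through-small-triangles bound paired with the off-diagonal decay of $\cA$, not from a direct bound on $\e^{\I\delta b\phi(\gamma,\gamma')}-1$.
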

\begin{lemma}\label{lem:hausdorffdistancelemma1}
	There exists a constant $C>0$ such that if $\dist(z,\sigma(H_{\delta b,b_0}^{\delta b}))>C\abs{\delta b}^{1/2}$ then $z\in \rho(H_{\delta b,b_0})$.
\end{lemma}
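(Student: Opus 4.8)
The two lemmas are dual to each other, so the plan is to set up one resolvent-type argument that handles both; I will describe it for Lemma~\ref{lem:hausdorffdistancelemma1} (that $z$ far from $\sigma(H^{\delta b}_{\delta b,b_0})$ lies in $\rho(H_{\delta b,b_0})$), the other being symmetric in the roles of $H_{\delta b,b_0}$ and $H^{\delta b}_{\delta b,b_0}$. Both operators are self-adjoint generalized matrices indexed by the \emph{finite-range} set $V_{\delta b}$, so they differ only through the replacement of the Peierls phase $\e^{\I b_0\phi(\gamma,\gamma')}$ by $\e^{\I(b_0+\delta b)\phi(\gamma,\gamma')}$ on matrix entries with $\abs{\gamma-\gamma'}<\abs{\delta b}^{-1/2}$. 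The key point is that on this range $\abs{\delta b\,\phi(\gamma,\gamma')}\le \abs{\delta b}\,C_{0,0}\abs{\gamma}\abs{\gamma'}$ is \emph{not} uniformly small, so one cannot bound the difference of the two operators in norm by a multiple of $\abs{\delta b}$; this is exactly the obstacle the lemma is designed to circumvent, and it is the main difficulty.

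The idea I would pursue is the standard trick from geometric perturbation theory (as in~\cite{CP-1}) of conjugating by a \emph{localized gauge}: cover $\Z^d$ by cubes of side $\sim\abs{\delta b}^{-1/2}$, and on each such cube pick a reference point and multiply by the scalar phase that makes the two operators agree \emph{inside} that cube. Equivalently, introduce a unitary $W$ on $\sH$ that is diagonal in $\gamma$, of the form $(Wf)_\gamma=\e^{\I\psi(\gamma)}f_\gamma$ with $\psi$ chosen to absorb the extra flux; then $W^*H^{\delta b}_{\delta b,b_0}W$ and $H_{\delta b,b_0}$ have the same entries whenever $\gamma,\gamma'$ lie in a common cube, and differ only across cube boundaries. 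Since the matrix entries decay faster than any power of $\jn{\gamma-\gamma'}$ (by~\eqref{eq:Aggbdecay}), the "boundary defect" operator $R:=W^*H^{\delta b}_{\delta b,b_0}W-H_{\delta b,b_0}$ is a generalized matrix whose entries are (i) nonzero only for $\abs{\gamma-\gamma'}\gtrsim \abs{\delta b}^{-1/2}$ up to the range cutoff, hence negligible, plus (ii) a term supported near the $\sim\abs{\delta b}^{1/2}$-density set of boundary lattice points, where one uses $\abs{\e^{\I y}-1}\le\abs{y}$ together with the flux estimate~\eqref{eq:fluxestimate}/\eqref{eq:phiineq} to gain a factor $\abs{\delta b}\cdot\abs{\delta b}^{-1/2}\cdot\abs{\delta b}^{-1/2}=\abs{\delta b}^{0}$ per entry but only over a sparse index set. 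Carefully combining a Schur--Holmgren estimate (Lemma~\ref{lem:directsumofoperatorsbounded}) with the sparsity of the boundary set and the rapid decay yields $\norm{R}\le C\abs{\delta b}^{1/2}$; this square root is the source of the $1/2$-H\"older exponent.

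Granting $\norm{R}\le C\abs{\delta b}^{1/2}$, the conclusion is immediate: if $\dist(z,\sigma(H^{\delta b}_{\delta b,b_0}))>C\abs{\delta b}^{1/2}$ then, since $W$ is unitary, $\dist(z,\sigma(W^*H^{\delta b}_{\delta b,b_0}W))>C\abs{\delta b}^{1/2}\ge\norm{R}$, so $z-H_{\delta b,b_0}=(z-W^*H^{\delta b}_{\delta b,b_0}W)-R$ is invertible by a Neumann series, i.e.\ $z\in\rho(H_{\delta b,b_0})$. For Lemma~\ref{lem:hausdorffdistancelemma} one runs the same argument with $W$ and $W^*$ interchanged. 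The main obstacle, as indicated, is constructing the localized gauge $\psi$ so that the interior defect truly vanishes on each cube (this needs the cocycle identity for $\fl$ built into~\eqref{eq:fluxestimate}) while keeping the boundary defect estimate sharp enough to produce exactly $\abs{\delta b}^{1/2}$ and not something worse; the bookkeeping of how many lattice points sit within $O(1)$ of a cube face, against the super-polynomial decay, is where the care is needed.
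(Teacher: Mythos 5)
Your high-level strategy --- construct an approximate inverse for the resolvent of one operator out of the resolvent of the other, with error $O(\abs{\delta b}^{1/2})$, and then conclude by a Neumann series --- is exactly the right idea and is in the same spirit as the paper, which also covers $\Z^d$ at scale $\abs{\delta b}^{-1/2}$ and locally "gauges away" the extra flux. The fatal problem lies in your concrete construction: you propose a \emph{single diagonal unitary} $W$, $(Wf)_\gamma=\e^{\I\psi(\gamma)}f_\gamma$, and claim that the resulting defect $R=W^*H^{\delta b}_{\delta b,b_0}W-H_{\delta b,b_0}$ has norm $O(\abs{\delta b}^{1/2})$. This cannot work, for two reasons.

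First, the "interior defect" does not vanish: writing $\psi(\gamma)=\delta b\,\phi(\gamma,n_\gamma)$ with $n_\gamma$ the cube's reference point, the residual exponent for $\gamma,\gamma'$ in the same cube is $-\delta b\,\fl(\gamma,n_\gamma,\gamma')$, which is $O(\abs{\delta b}^{1/2}\abs{\gamma-\gamma'})$ rather than $0$ (fortunately still acceptable after the decay of $\cA_{\gamma\gamma',b_0}$ absorbs the $\abs{\gamma-\gamma'}$ factor). Second, and decisively, the "boundary defect" is $O(1)$, not $O(\abs{\delta b}^{1/2})$, and the sparsity of the boundary set does not rescue the operator norm. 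Here is the obstruction: whatever $\psi\colon\Z^d\to\R$ you choose, the mismatch $m(\gamma,\gamma')\coloneqq\psi(\gamma')-\psi(\gamma)+\delta b\,\phi(\gamma,\gamma')$ is antisymmetric, so summing it around any discrete cycle telescopes the $\psi$'s and leaves $\delta b$ times the flux through the cycle. For a square cycle of side $k$ the enclosed flux is of order $k^2$, hence the total mismatch around the cycle is of order $\delta b\,k^2$, while the cycle has $\sim 4k$ edges; thus for some edge one must have $\abs{m}\gtrsim\delta b\,k/4$. Taking $k$ a large multiple of $\abs{\delta b}^{-1/2}$ shows that for some nearest-neighbour pair $\abs{m(\gamma,\gamma')}$ is bounded below away from $0$, so the defect entry $\abs{\e^{\I m(\gamma,\gamma')}-1}\|\cA_{\gamma\gamma',b_0}\|$ is $O(1)$ there. (With the concrete choice $\psi(\gamma)=\delta b\,\phi(\gamma,n_\gamma)$ and $\phi$ bilinear one sees explicit terms like $\tfrac{\delta b}{2}n_\gamma^\top B\,(n_{\gamma'}-n_\gamma)$ that grow without bound as $\gamma$ moves away from the origin.) Since the operator norm of a generalized matrix is a local quantity --- a test vector $f$ supported near a single cube face makes $\|Rf\|/\|f\|$ of order the size of those $O(1)$ entries --- the low density ($\sim\abs{\delta b}^{1/2}$) of the boundary index set is of no help.

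The paper circumvents this by \emph{not} conjugating: instead of one global $\psi$, it uses a smooth partition of unity $\sum_n g_{n,\delta b}^2\equiv 1$ at scale $\abs{\delta b}^{-1/2}$ and a superoperator $W_{\delta b}(R)=\{\sum_n g^+_{\gamma,n,\delta b}R_{\gamma\gamma'}g^-_{\gamma',n,\delta b}\}$ built from a \emph{family} of local reference gauges indexed by $n$. The quantity $W_{\delta b}((H_{\delta b,b_0}-z)^{-1})$ is shown (Lemma~\ref{lem:pseudoinverse}) to be a right parametrix for $H^{\delta b}_{\delta b,b_0}-z$ with error $O(\abs{\delta b}^{1/2}/\dist(z,\sigma))$; the two relevant error terms are controlled by the Lipschitz bound $\abs{g_{n,\delta b}(x)-g_{n,\delta b}(y)}\le\abs{\delta b}^{1/2}C_g\abs{x-y}$ (property~\ref{it:meanvalue}) and by the flux estimate for the triangle $(\gamma,\gamma'',n\abs{\delta b}^{-1/2})$, which has one short side of length $<\abs{\delta b}^{-1/2}$ and height $\lesssim\abs{\delta b}^{-1/2}$, giving $\abs{\delta b\,\fl}\lesssim\abs{\delta b}^{1/2}\abs{\gamma-\gamma''}$. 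There is no hard cube boundary and hence no $O(1)$ boundary defect. For the present Lemma the paper simply interchanges the roles of $H^{\delta b}_{\delta b,b_0}$ and $H_{\delta b,b_0}$ and flips the sign of the phase in $W_{\delta b}$ (Remark~\ref{rem:rembs1}).
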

Then \eqref{eq:part24} is a direct consequence of the following general lemma.
\begin{lemma}\label{lem:bs1}
Let $T_1,T_2$ be bounded operators on some Hilbert space and $C >0$ a constant. The following assertions are equivalent:
\begin{enumerate}
	\item If $\dist(z,\sigma(T_j))>C$ then $z\in \rho(T_k)$, for $j,k=1,2$.
	\item $\dH(\sigma(T_1),\sigma(T_2))\leq C$.
\end{enumerate}
\end{lemma}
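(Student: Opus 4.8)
The plan is to observe that Lemma~\ref{lem:bs1} is a purely set-theoretic statement about the two nonempty compact sets $X:=\sigma(T_1)$ and $Y:=\sigma(T_2)$, using only that $\rho(T_j)$ is by definition the complement of $\sigma(T_j)$ (for a bounded operator on a Hilbert space the spectrum is nonempty, closed and bounded, so all the distances occurring below are finite). First I would clean up assertion~(1). Among the four choices $j,k\in\{1,2\}$, the two diagonal ones $j=k$ are vacuous: if $\dist(z,\sigma(T_j))>C>0$ then in particular $z\notin\sigma(T_j)$ because $\sigma(T_j)$ is closed, so $z\in\rho(T_j)$ holds automatically. Hence (1) is equivalent to the conjunction of the two genuine implications
\begin{align*}
\dist(z,\sigma(T_1))>C \ \Longrightarrow\ z\notin\sigma(T_2),
\qquad
\dist(z,\sigma(T_2))>C \ \Longrightarrow\ z\notin\sigma(T_1),
\end{align*}
required to hold for every $z$ in the scalar field.

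The next step is to take contrapositives. The first implication above is equivalent to ``$z\in\sigma(T_2)\Rightarrow\dist(z,\sigma(T_1))\le C$'', i.e.\ to $\sup_{z\in\sigma(T_2)}\dist(z,\sigma(T_1))\le C$; symmetrically, the second is equivalent to $\sup_{z\in\sigma(T_1)}\dist(z,\sigma(T_2))\le C$. Here I only use the elementary fact that a family of nonnegative reals is bounded by $C$ if and only if its supremum is $\le C$; no attainment of the supremum is needed. The conjunction of these two bounds is, by the very definition of the Hausdorff distance, precisely $\dH(\sigma(T_1),\sigma(T_2))=\max\{\sup_{z\in\sigma(T_1)}\dist(z,\sigma(T_2)),\ \sup_{z\in\sigma(T_2)}\dist(z,\sigma(T_1))\}\le C$, which is assertion~(2). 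Chaining these equivalences gives $(1)\Leftrightarrow(2)$.

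I do not expect any real obstacle here: the lemma is a bookkeeping device that repackages the contrapositive form of a resolvent estimate as a Hausdorff-distance bound, and the only points to keep in mind are that $\sigma(T_j)$ is closed (so ``distance $>C$'' forces membership in $\rho(T_j)$) and that the quantifier ``for $j,k=1,2$'' in~(1) silently includes the trivial diagonal cases. Once Lemmas~\ref{lem:hausdorffdistancelemma} and~\ref{lem:hausdorffdistancelemma1} are in place, applying Lemma~\ref{lem:bs1} with $T_1=H_{\delta b,b_0}$, $T_2=H_{\delta b,b_0}^{\delta b}$ and the constant $C\abs{\delta b}^{1/2}$ yields \eqref{eq:part24} at once.
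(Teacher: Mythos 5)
Your proof is correct and follows essentially the same route as the paper: the paper argues by contradiction in both directions, which is just the contrapositive-and-supremum repackaging you spell out more explicitly. The two non-trivial cases of (1) unpack exactly to the two one-sided Hausdorff bounds, and the diagonal cases are indeed vacuous (even more simply than you note: if $z\in\sigma(T_j)$ then $\dist(z,\sigma(T_j))=0\not>C$).
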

\begin{proof}
We first show by contradiction that \emph{(1)} implies \emph{(2)}. Assume that $\dH(\sigma(T_1),\sigma(T_2))> C$. Then either there exists some $z$ such that $\dist(z,\sigma(T_2)) > C$ and $z \in \sigma(T_1)$, or there exists some $z$ such that $\dist(z,\sigma(T_1)) > C$ and $z \in \sigma(T_2)$. This  contradicts \emph{(1)}.

To show that \emph{(2)} implies \emph{(1)}, let $z$ be such that $\dist(z,\sigma(T_j))>C$. Then $z$ cannot belong to the spectrum of $T_k$ without contradicting \emph{(2)}.
\end{proof}
In what follows we only prove Lemma~\ref{lem:hausdorffdistancelemma} since the proof of Lemma~\ref{lem:hausdorffdistancelemma1} is similar (cf.\ Remark~\ref{rem:rembs1}). 

The main idea behind the proof of  Lemma~\ref{lem:hausdorffdistancelemma} is showing that for every $z\in \rho(H_{\delta b,b_0})$ there exists some bounded operator $S_z$ such that
\begin{equation}\label{eq:almostident}
(H_{\delta b,b_0}^{\delta b}-z)S_z=\id +\cO\Big(\frac{\abs{\delta b}^{\frac{1}{2}}}{\dist(z,\sigma(H_{\delta b,b_0}))}\Big).
\end{equation} 
Then if the right hand side is invertible, $z$ belongs to the resolvent set of $H_{\delta b,b_0}^{\delta b}$.
\subsection{Proof of Lemma~\ref{lem:hausdorffdistancelemma}}
In order to construct the operator $S_z$ let $g,\tilde{g}\in C_0^\infty(\R^d)$ and $r>0$ satisfy:
\begin{enumerate}
	\item $g(x),\tilde{g}(x)\in [0,1]$ for every $x\in \R^d$. 
	\item $\Supp g\subset B_r(0)$ and $\Supp \tilde{g} \subset B_{r+2}(0)$.
	\item $ \tilde{g}\equiv 1 $ on $B_{r+1}(0)$.
	\item $ \sum_{\gamma\in \Z^d}g^2(x-\gamma) =1$ for every $x\in \R^d$.
\end{enumerate}
Furthermore, for any $n\in\Z^d$ define
\begin{align*}
g_{n,\delta b}(x)\coloneqq g(\abs{\delta b}^{1/2} x-n)\quad \textup{and} \quad \tilde{g}_{n,\delta b}(x)\coloneqq \tilde{g}(\abs{\delta b}^{1/2} x-n)
\end{align*}
and note the following properties:
\begin{enumerate}[label={(\alph*)}]
	\item $ \Supp g_{n,\delta b} \subset B_{r\abs{\delta b}^{-1/2}}(n\abs{\delta b}^{-1/2}) $ and $ \Supp \tilde{g}_{n,\delta b} \subset B_{(r+2)\abs{\delta b}^{-1/2}}(n\abs{\delta b}^{-1/2}) $.
	\item \label{it:meanvalue} $ \abs{g_{n,\delta b}(x)-g_{n,\delta b}(y)}\leq \abs{\delta b}^{1/2} C_g \abs{x-y} $ for every $x,y\in \R^d$. 
	\item $\tilde{g}_{n,\delta b}(x)g_{n,\delta b}(y)=g_{n,\delta b}(y)$ whenever $\abs{x-y}\leq \abs{\delta b}^{-1/2}$.
	\item \label{it:tildegneps} \label{it:gneps} If for each $n\in \Z^d$ we define the set of \emph{$r$-neighbors} to $n$ by 
	\begin{align*}
	N_r(n)\coloneqq\{n'\in \Z^d\mid 0<\abs{n-n'}< 2r\},
	\end{align*}
	then $g_{n,\delta b}g_{n',\delta b}\equiv 0$ if $n'\not \in N_r(n)\cup \{n\}$ and $\tilde{g}_{n,\delta b}\tilde{g}_{n',\delta b}\equiv 0$ if $n'\not \in N_{r+2}(n)\cup \{n\}$.
	\item \label{it:tildegestimat} $\abs{\gamma''-n\abs{\delta b}^{-1/2}}\tilde{g}_{n,\delta b}(\gamma'')\leq (r+2)\abs{\delta b}^{-1/2}\tilde{g}_{n,\delta b}(\gamma'')$ for any $n,\gamma''\in \Z^d$.
\end{enumerate}
For each $n,\gamma\in \Z^d$ define the scalars
\begin{align*}
g_{\gamma,n,\delta b}^{\pm}\coloneqq \e^{\pm \I \delta b \phi(\gamma,n\abs{\delta b}^{-1/2})}g_{n,\delta b}(\gamma)
\end{align*}
and the operator $W_{\delta b}$ on $B(\sH)$ by
\begin{align*}
W_{\delta b}(R)\coloneqq \Big\{\sum_{n\in \Z^d} g_{\gamma,n,\delta b}^{+} R_{\gamma,\gamma'} g_{\gamma',n,\delta b}^{-}\Big\}_{\gamma,\gamma'\in\Z^d},
\end{align*}
for $R\in B(\sH)$.
\begin{lemma}\label{lem:Weps}
	The operator $W_{\delta b}$ is bounded with $\norm{W_{\delta b}}\leq (v_r+1)^{1/2}$, where $v_r\coloneqq \abs{N_r(n)}$ is independent of $n$.
\end{lemma}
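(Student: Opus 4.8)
The plan is to exploit the finite-overlap property \ref{it:tildegneps}: for each fixed $\gamma$ (resp. $\gamma'$), the cutoff $g_{n,\delta b}(\gamma)$ is nonzero for at most $v_r+1$ values of $n$, where $v_r=\abs{N_r(n)}$ does not depend on $n$. First I would fix $R\in B(\sH)$ and a vector $f=(f_\gamma)_{\gamma\in\Z^d}\in\sH$, and write out
\begin{align*}
(W_{\delta b}(R)f)_\gamma=\sum_{n\in\Z^d}g_{\gamma,n,\delta b}^{+}\sum_{\gamma'\in\Z^d}R_{\gamma,\gamma'}\,g_{\gamma',n,\delta b}^{-}f_{\gamma'}=\sum_{n\in\Z^d}g_{\gamma,n,\delta b}^{+}\,\big(R\,(g^{-}_{\cdot,n,\delta b}f)\big)_\gamma,
\end{align*}
where for each $n$ the vector $g^{-}_{\cdot,n,\delta b}f$ has components $g^{-}_{\gamma',n,\delta b}f_{\gamma'}$. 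The key observation is that $\abs{g^{\pm}_{\gamma,n,\delta b}}=g_{n,\delta b}(\gamma)\in[0,1]$, since the exponential prefactor is a unimodular phase; hence $\norm{g^{-}_{\cdot,n,\delta b}f}_{\sH}\leq\norm{f}_{\sH}$, and more importantly $\sum_{n}\abs{g^{\pm}_{\gamma,n,\delta b}}^2=\sum_n g_{n,\delta b}(\gamma)^2=1$ by property~(4) (rescaled), while for fixed $\gamma$ the sum over $n$ has at most $v_r+1$ nonzero terms.

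Next I would estimate the $\sH$-norm of the output. Using the Cauchy--Schwarz inequality on the $n$-sum, restricted to the at most $v_r+1$ indices $n$ with $g_{n,\delta b}(\gamma)\neq0$, together with $\sum_n g_{n,\delta b}(\gamma)^2=1$:
\begin{align*}
\Bnorm{(W_{\delta b}(R)f)_\gamma}_{L^2(\Omega)}^2
&=\Bnorm{\sum_{n}g_{\gamma,n,\delta b}^{+}\big(R\,(g^{-}_{\cdot,n,\delta b}f)\big)_\gamma}_{L^2(\Omega)}^2\\
&\leq(v_r+1)\sum_{n}g_{n,\delta b}(\gamma)^2\,\Bnorm{\big(R\,(g^{-}_{\cdot,n,\delta b}f)\big)_\gamma}_{L^2(\Omega)}^2,
\end{align*}
where I used that $\abs{g^{+}_{\gamma,n,\delta b}}^2=g_{n,\delta b}(\gamma)^2$. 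Summing over $\gamma\in\Z^d$ and interchanging the order of summation (all terms nonnegative), the factor $\sum_\gamma g_{n,\delta b}(\gamma)^2$ is bounded by $v_r+1$ uniformly in $n$ by the same finite-overlap argument, so one is left with
\begin{align*}
\norm{W_{\delta b}(R)f}_{\sH}^2\leq(v_r+1)\sum_{n\in\Z^d}\Bnorm{\sum_\gamma g_{n,\delta b}(\gamma)^2\big(R(g^-_{\cdot,n,\delta b}f)\big)_\gamma}\cdots
\end{align*}
Here a cleaner route is preferable: bound $\Bnorm{(R(g^-_{\cdot,n,\delta b}f))_\gamma}_{L^2(\Omega)}$ crudely and instead sum first over $\gamma$ to get $\sum_\gamma\Bnorm{(R(g^-_{\cdot,n,\delta b}f))_\gamma}^2_{L^2(\Omega)}=\norm{R(g^-_{\cdot,n,\delta b}f)}_{\sH}^2\leq\norm{R}^2\norm{g^-_{\cdot,n,\delta b}f}_{\sH}^2$, and then $\sum_n\norm{g^-_{\cdot,n,\delta b}f}_{\sH}^2=\sum_n\sum_{\gamma'}g_{n,\delta b}(\gamma')^2\norm{f_{\gamma'}}^2_{L^2(\Omega)}=\norm{f}_{\sH}^2$ by property~(4) again. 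Combining, $\norm{W_{\delta b}(R)f}_{\sH}^2\leq(v_r+1)\norm{R}^2\norm{f}_{\sH}^2$, which gives $\norm{W_{\delta b}}\leq(v_r+1)^{1/2}$.

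The only mild subtlety — and the step I would be most careful about — is making the double interchange of summations rigorous and pairing the $(v_r+1)$-factor with exactly one of the two overlap sums rather than both (which would give the worse bound $v_r+1$ instead of $(v_r+1)^{1/2}$). The correct accounting is: apply Cauchy--Schwarz in $n$ for fixed $\gamma$ picking up one factor $(v_r+1)^{1/2}$ and leaving $\sum_n g_{n,\delta b}(\gamma)^2=1$; then the remaining sum over $n$ of $\norm{R(g^-_{\cdot,n,\delta b}f)}^2_\sH$ telescopes via property~(4) with no further loss. Everything else — boundedness of each $W_{\delta b}(R)\in B(\sH)$, linearity in $R$ — is immediate. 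I would also remark that $v_r$ is finite and $n$-independent because $N_r(n)$ is a translate of $N_r(0)$ in $\Z^d$, so $\abs{N_r(n)}=\abs{\Z^d\cap B_{2r}(0)\setminus\{0\}}$.
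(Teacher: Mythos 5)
Your proof is correct and follows essentially the same route as the paper's: decompose $(W_{\delta b}(R)f)_\gamma$ into the sum over $n$ of $g^+_{\gamma,n,\delta b}(R\Psi_{n,\delta b})_\gamma$ with $\Psi_{n,\delta b}:=g^-_{\cdot,n,\delta b}f$, control the squared norm of that $n$-sum by finitely many terms, sum over $\gamma$, use $\norm{R\Psi_n}\leq\norm{R}\norm{\Psi_n}$, and finish with the partition-of-unity identity $\sum_n\norm{\Psi_n}^2=\norm{f}^2$. The only difference is cosmetic: the paper expands $\norm{\sum_n\cdot}^2$ into a double $n,n'$-sum, applies AM--GM to $\abs{\ip{\cdot}{\cdot}}$, and restricts $n'$ to $N_r(n)\cup\{n\}$ via property~(d) before counting; you instead restrict the $n$-sum to its $\leq v_r+1$ nonzero terms and apply the elementary bound $\abs{\sum_n c_n}^2\leq(v_r+1)\sum_n\abs{c_n}^2$. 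Both give the factor $v_r+1$.

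One small caution about your concluding ``accounting'' remark: you cannot simultaneously pick up the factor $v_r+1$ \emph{and} consume the normalization $\sum_n g_{n,\delta b}(\gamma)^2=1$ from a single Cauchy--Schwarz application. In the route you actually carry out, you use $\abs{\sum_n c_n}^2\leq(v_r+1)\sum_n\abs{c_n}^2$ and then bound the leftover weight by $g_{n,\delta b}(\gamma)^2\leq1$; the identity $\sum_n g_{n,\delta b}(\gamma)^2=1$ only enters later via $\sum_n\norm{\Psi_n}^2=\norm{f}^2$. Incidentally, had you applied Cauchy--Schwarz in the weighted form $\bnorm{\sum_n a_n x_n}^2\leq\big(\sum_n\abs{a_n}^2\big)\big(\sum_n\norm{x_n}^2\big)$ with $a_n=g^+_{\gamma,n,\delta b}$, so $\sum_n\abs{a_n}^2=1$, you would obtain the sharper bound $\norm{W_{\delta b}}\leq1$ without ever invoking the finite-overlap count---but the lemma only claims $(v_r+1)^{1/2}$, so your argument as written is perfectly adequate.
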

\begin{proof}
	Let $f=(f_\gamma)\in \sH$ be arbitrary and for every $n\in \Z^d$ let $\Psi_{n,\delta b}\in \sH$ be given by
	\begin{equation}\label{eq:psi1}
	(\Psi_{n,\delta b})_\gamma:=g_{\gamma,n,\delta b}^{-} f_\gamma.
	\end{equation}
	Then 
	\begin{equation}\label{eq:tempop}
	\sum_{n\in \Z^d}\norm{\Psi_{n,\delta b}}^2_{\sH}=\sum_{n\in \Z^d}\sum_{\gamma\in\Z^d}g_{n,\delta b}^2(\gamma) \norm{f_\gamma}^2_{L^2(\Omega)}=\norm{f}^2_{\sH}.
	\end{equation}
	Let $R\in B(\sH)$ be arbitrary. By the definition of $W_{\delta b}$ we have 
	\begin{equation*}
	[W_{\delta b}(R)f]_{\gamma}=\sum_{n\in \Z^d} g_{\gamma,n,\delta b}^{+}(R\Psi_{n,\delta b})_{\gamma},
	\end{equation*}
	for any $\gamma\in\Z^d$. Thus, if we write the norm of $[W_{\delta b}(R)f]_\gamma$ in $L^2(\Omega)$ as an inner product with the previous expression we obtain the estimate
	\begin{align*}
	\norm{[W_{\delta b}(R)f]_{\gamma}}_{L^2(\Omega)}^2&\leq\sum_{n\in \Z^d}\sum_{n'\in N_r(n)\cup\{n\}}\frac{1}{2}g_{n,\delta b}(\gamma)g_{n',\delta b}(\gamma)(\norm{(R\Psi_{n,\delta b})_{\gamma}}_{L^2(\Omega)}^2+\norm{(R\Psi_{n',\delta b})_{\gamma}}_{L^2(\Omega)}^2)\\
	&\leq \frac{1}{2}\sum_{n\in \Z^d}\bigg((v_r+2)\norm{(R\Psi_{n,\delta b})_{\gamma}}_{L^2(\Omega)}^2+\sum_{n'\in N_r(n)}\norm{(R\Psi_{n',\delta b})_{\gamma}}_{L^2(\Omega)}^2\bigg),
	\end{align*}
	where it suffices to sum $n'$ over the set $N_r(n)\cup\{n\}$ by~\ref{it:gneps}.
	
	For any $n\in \Z^d$ the second sum contains the term $\norm{(R\Psi_{n,\delta b})_{\gamma}}^2_{L^2(\Omega)}$ once for every element in the set $N_r(n)$. Hence we obtain
	\begin{align*}
	\norm{[W_{\delta b}(R)f]_{\gamma}}^2_{L^2(\Omega)}
	&\leq (v_r+1)\sum_{n\in \Z^d}\norm{(R\Psi_{n,\delta b})_{\gamma}}^2_{L^2(\Omega)}.
	\end{align*} 
	By summing over $\gamma\in \Z^d$, and applying the boundedness of $R$ together with~\eqref{eq:tempop} we obtain
	\begin{align*}
	\norm{W_{\delta b}(R)f}^2_{\sH}&\leq (v_r+1)\sum_{n\in \Z^d} \norm{R\Psi_{n,\delta b}}^2_{\sH}\leq (v_r+1) \norm{R}^2 \norm{f}_{\sH}^2,
	\end{align*}
	which completes the proof.
\end{proof}

We will show that the operator $W_{\delta b}((H_{\delta b,b_0}-z)^{-1})$ acts as $S_z$ in \eqref{eq:almostident}. To show this we need the following result.
\begin{lemma}\label{lem:xseq}
	Let $f\in \sH$ and $z\in\rho(H_{\delta b,b_0})$ be arbitrary. For each $\gamma \in \Z^d$ define the scalar
	\begin{align*}
	x_{\gamma}\coloneqq\sum_{n\in \Z^d} \tilde{g}_{n,\delta b}(\gamma)\norm{((H_{\delta b,b_0}-z)^{-1} \Psi_{n,\delta b})_{\gamma}}_{L^2(\Omega)},
	\end{align*}
	where $\Psi_{n,\delta b}$ is given by~\eqref{eq:psi1}. Then $x=(x_\gamma)\in \ell^2(\Z^d)$ with
	\begin{align*}
	\norm{x}_{\ell^2(\Z^d)}&\leq \frac{(v_{r+2}+1)^{1/2}}{\dist(z,\sigma(H_{\delta b,b_0}))} \norm{f}_{\sH}.
	\end{align*}
\end{lemma}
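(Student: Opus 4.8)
The plan is to repeat the averaging argument that proved Lemma~\ref{lem:Weps}, now with the cut-offs $\tilde g_{n,\delta b}$ replacing $g_{n,\delta b}$ and with the specific bounded operator $R := (H_{\delta b,b_0}-z)^{-1}$. Since $H_{\delta b,b_0}$ is self-adjoint and $z \in \rho(H_{\delta b,b_0})$, we have $\norm{R} \le \dist(z,\sigma(H_{\delta b,b_0}))^{-1}$, and this is the only place the distance to the spectrum enters. The key structural fact I would use is that, for each fixed $\gamma \in \Z^d$, the index set $S_\gamma := \{n \in \Z^d : \tilde g_{n,\delta b}(\gamma) \ne 0\}$ has at most $v_{r+2}+1$ elements: any two $n,n' \in S_\gamma$ satisfy $\tilde g_{n,\delta b}\tilde g_{n',\delta b} \not\equiv 0$, so $n' \in N_{r+2}(n) \cup \{n\}$ by~\ref{it:tildegneps}, whence $S_\gamma \subset N_{r+2}(n_0) \cup \{n_0\}$ for any chosen $n_0 \in S_\gamma$. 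In particular the sum defining $x_\gamma$ is finite for each $\gamma$.

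With this in hand the estimate is short. Using $0 \le \tilde g \le 1$ and Cauchy--Schwarz on the finite sum over $S_\gamma$,
\[
x_\gamma^2 = \Big(\sum_{n\in\Z^d}\tilde g_{n,\delta b}(\gamma)\,\norm{(R\Psi_{n,\delta b})_\gamma}_{L^2(\Omega)}\Big)^2 \le \Big(\sum_{n\in S_\gamma}\tilde g_{n,\delta b}(\gamma)\Big)\sum_{n\in\Z^d}\tilde g_{n,\delta b}(\gamma)\,\norm{(R\Psi_{n,\delta b})_\gamma}_{L^2(\Omega)}^2 \le (v_{r+2}+1)\sum_{n\in\Z^d}\tilde g_{n,\delta b}(\gamma)\,\norm{(R\Psi_{n,\delta b})_\gamma}_{L^2(\Omega)}^2.
\]
Summing over $\gamma$, interchanging the nonnegative sums, and using $\tilde g \le 1$ once more,
\[
\norm{x}_{\ell^2(\Z^d)}^2 \le (v_{r+2}+1)\sum_{n\in\Z^d}\sum_{\gamma\in\Z^d}\norm{(R\Psi_{n,\delta b})_\gamma}_{L^2(\Omega)}^2 = (v_{r+2}+1)\sum_{n\in\Z^d}\norm{R\Psi_{n,\delta b}}_\sH^2.
\]

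Finally I would invoke $\norm{R\Psi_{n,\delta b}}_\sH \le \norm{R}\norm{\Psi_{n,\delta b}}_\sH$ together with the partition-of-unity identity $\sum_{n\in\Z^d}\norm{\Psi_{n,\delta b}}_\sH^2 = \norm{f}_\sH^2$ from~\eqref{eq:tempop}, which gives $\norm{x}_{\ell^2(\Z^d)}^2 \le (v_{r+2}+1)\norm{R}^2\norm{f}_\sH^2$; hence $x \in \ell^2(\Z^d)$, and inserting the resolvent bound $\norm{R} \le \dist(z,\sigma(H_{\delta b,b_0}))^{-1}$ yields the claimed inequality. I do not anticipate a genuine obstacle here; the one point to be careful about is that the overlap count $\abs{S_\gamma} \le v_{r+2}+1$ is uniform in $\delta b$, which holds because the dilation by $\abs{\delta b}^{1/2}$ in $\tilde g_{n,\delta b}(x) = \tilde g(\abs{\delta b}^{1/2}x - n)$ does not alter which pairs of lattice translates of $\tilde g$ have overlapping supports, so~\ref{it:tildegneps} applies with the same $v_{r+2}$ for every $\delta b$.
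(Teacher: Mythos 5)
Your argument is correct and matches the paper's, which simply says to repeat ``similar arguments as in the proof of Lemma~\ref{lem:Weps}'' and then invokes the resolvent estimate. You make the overlap count $\abs{S_\gamma}\le v_{r+2}+1$ for the $\tilde g$-family explicit and replace the paper's AM--GM step (used in Lemma~\ref{lem:Weps}) by a direct Cauchy--Schwarz on the finite sum; both yield the same constant $(v_{r+2}+1)$ and the rest of the argument (partition of unity plus the resolvent bound for self-adjoint operators) is identical.
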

\begin{proof}
	By using similar arguments as in the proof of Lemma~\ref{lem:Weps} we obtain 
	\begin{align*}
	\norm{x}_{\ell^2(\Z^d)}^2&\leq (v_{r+2}+1)\sum_{\gamma\in \Z^d}\sum_{n\in \Z^d} \norm{((H_{\delta b,b_0}-z)^{-1} \Psi_{n,\delta b})_{\gamma}}_{L^2(\Omega)}^2\leq \frac{(v_{r+2}+1)\norm{f}_{\sH}^2}{\dist(z,\sigma(H_{\delta b,b_0}))^2},
	\end{align*}
	where we have used the well-known equality
	\begin{align*}
	\norm{(T-z)^{-1}}= \frac{1}{\dist(z,\sigma(T))},
	\end{align*}
	which holds for $T$ normal and $z\in \rho(T)$.
\end{proof}
We are now ready to verify \eqref{eq:almostident}.
\begin{lemma}\label{lem:pseudoinverse}
	There exists a constant $C$ such that for all $z\in \rho(H_{\delta b,b_0})$ the operator
	\begin{align*}
	T_z=(H_{\delta b,b_0}^{\delta b}-z)W_{\delta b}((H_{\delta b,b_0}-z)^{-1})-\id
	\end{align*}
	is bounded on $\sH$ with
	\begin{align*}
	\norm{T_z}\leq \frac{C\abs{\delta b}^{1/2}}{\dist(z,\sigma(H_{\delta b,b_0}))}.
	\end{align*}
\end{lemma}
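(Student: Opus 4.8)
The strategy is to expand $T_z = (H_{\delta b,b_0}^{\delta b}-z)W_{\delta b}((H_{\delta b,b_0}-z)^{-1}) - \id$ and exploit the fact that $W_{\delta b}$ is built from a partition of unity $\{g_{n,\delta b}^2\}_n$ that becomes \emph{slowly varying} as $\delta b \to 0$, in the sense of property~\ref{it:meanvalue}. First I would write, for $R = (H_{\delta b,b_0}-z)^{-1}$ and $f \in \sH$,
\begin{align*}
[(H_{\delta b,b_0}^{\delta b}-z)W_{\delta b}(R)f]_\gamma = \sum_{\gamma' \in V_{\delta b}} \sum_{n\in\Z^d} \e^{\I(b_0+\delta b)\phi(\gamma,\gamma')} \cA_{\gamma\gamma',b_0} g_{\gamma',n,\delta b}^{+} (R\Psi_{n,\delta b})_{\gamma'} - f_\gamma,
\end{align*}
and the key algebraic step is to commute the scalar $g_{\gamma',n,\delta b}^{+}$ past the matrix entry. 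Writing $g_{\gamma',n,\delta b}^{+} = g_{\gamma,n,\delta b}^{+} + (g_{\gamma',n,\delta b}^{+} - g_{\gamma,n,\delta b}^{+})$, the leading term reassembles (using $\sum_n g_{n,\delta b}^2(\gamma) = 1$, the restriction to $V_{\delta b}$ where $|\gamma-\gamma'| < |\delta b|^{-1/2}$ so that $\tilde g_{n,\delta b}(\gamma)g_{n,\delta b}(\gamma') = g_{n,\delta b}(\gamma')$, and the resolvent identity $(H_{\delta b,b_0}-z)R = \id$) into exactly $f_\gamma$, cancelling the $-\id$. What remains is a \emph{commutator-type error term} whose $(\gamma,\gamma')$ entry carries the factor $g_{\gamma',n,\delta b}^{+} - g_{\gamma,n,\delta b}^{+}$.

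Second, I would estimate that error term. The difference of the phase factors $\e^{\I\delta b \phi(\gamma,n|\delta b|^{-1/2})} - \e^{\I\delta b\phi(\gamma',n|\delta b|^{-1/2})}$ is controlled by $|\delta b|\,|\phi(\gamma,n|\delta b|^{-1/2}) - \phi(\gamma',n|\delta b|^{-1/2})|$ via~\eqref{eq:complexexponentialinequality}; using~\eqref{eq:phiineq} (or rather the bound on $\partial_x\phi$) this is $\lesssim |\delta b|\,|\gamma-\gamma'|\,|n|\,|\delta b|^{-1/2}$, and on the support of $\tilde g_{n,\delta b}$ we have $|n||\delta b|^{-1/2} \lesssim |\gamma| + |\delta b|^{-1/2}$ by property~\ref{it:tildegestimat}, while on $V_{\delta b}$, $|\gamma-\gamma'| < |\delta b|^{-1/2}$; combining, this contribution is $\lesssim |\delta b|^{1/2}$ after the $\tilde g_{n,\delta b}(\gamma)$-weight absorbs the localization. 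The difference of the amplitude factors $g_{n,\delta b}(\gamma) - g_{n,\delta b}(\gamma')$ is directly $\lesssim |\delta b|^{1/2} C_g |\gamma-\gamma'| \lesssim |\delta b|^{1/2} \cdot |\delta b|^{1/2} |\gamma-\gamma'|$ on $V_{\delta b}$—wait, more carefully, $|\delta b|^{1/2}|\gamma-\gamma'|$ is $O(1)$ there, so this piece is $O(|\delta b|^{1/2})$ once we also use that $|\gamma - \gamma'|^{2}$-type losses are beaten by the rapid decay~\eqref{eq:Aggbdecay} of $\|\cA_{\gamma\gamma',b_0}\|$. Then I would package the resulting generalized matrix: its $(\gamma,\gamma')$-block has norm bounded by $|\delta b|^{1/2} \jn{\gamma-\gamma'}^{-2d}$ times a factor built from the $x_\gamma$ of Lemma~\ref{lem:xseq}, and apply Lemma~\ref{lem:directsumofoperatorsbounded} together with the $\ell^2$ bound $\|x\|_{\ell^2} \leq \frac{(v_{r+2}+1)^{1/2}}{\dist(z,\sigma(H_{\delta b,b_0}))}\|f\|_\sH$ from Lemma~\ref{lem:xseq} and the norm bound $\|W_{\delta b}\| \leq (v_r+1)^{1/2}$ from Lemma~\ref{lem:Weps}. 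This yields $\|T_z f\|_\sH \leq C |\delta b|^{1/2} \dist(z,\sigma(H_{\delta b,b_0}))^{-1} \|f\|_\sH$ as claimed.

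\textbf{Main obstacle.} The routine part is the two Taylor/mean-value estimates on the phase and amplitude differences; the delicate bookkeeping is organizing the double sum over $n$ and over $\gamma'$ so that (i) the partition-of-unity collapse producing the cancellation of $\id$ is exact — one must be careful that the truncation to $V_{\delta b}$ does not spoil the identity $\sum_n g_{n,\delta b}^2 = 1$, which is why property (c)/$\tilde g_{n,\delta b}g_{n,\delta b} = g_{n,\delta b}$ on the relevant range is invoked — and (ii) the error term is genuinely absorbed into the Schur-type scheme of Lemma~\ref{lem:directsumofoperatorsbounded} with the $\dist(z,\sigma)^{-1}$ coming out through $\|x\|_{\ell^2}$ rather than naively through $\|(H_{\delta b,b_0}-z)^{-1}\|$ block by block (a block-by-block estimate would lose the decay). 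Getting the single power $|\delta b|^{1/2}$ rather than $|\delta b|^{1/2} \cdot (\text{something divergent})$ hinges on the interplay between the width $|\delta b|^{-1/2}$ of $V_{\delta b}$ (contributing $|\delta b|^{-1/2}$) and the Lipschitz constant $|\delta b|^{1/2}$ of $g_{n,\delta b}$ (contributing $|\delta b|^{1/2}$), whose product is exactly $O(1)$, leaving one clean factor of $|\delta b|^{1/2}$ from the remaining $|\delta b|$ in the phase estimate — this balancing is the conceptual heart of the argument.
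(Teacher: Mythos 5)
Your strategy fails at the reassembly step, and the problem is precisely the one the paper's flux identity is designed to solve. After splitting $g^{+}_{\gamma',n,\delta b} = g^{+}_{\gamma,n,\delta b} + (g^{+}_{\gamma',n,\delta b} - g^{+}_{\gamma,n,\delta b})$, the ``leading'' term is
\begin{align*}
\sum_{n}g^{+}_{\gamma,n,\delta b}\sum_{\gamma'}\e^{\I(b_0+\delta b)\phi(\gamma,\gamma')}\cA_{\gamma\gamma',b_0}\big(R\Psi_{n,\delta b}\big)_{\gamma'}
= \sum_{n}g^{+}_{\gamma,n,\delta b}\big[H^{\delta b}_{\delta b,b_0}\,R\,\Psi_{n,\delta b}\big]_\gamma,
\end{align*}
and this does \emph{not} collapse to $\sum_n g^{+}_{\gamma,n,\delta b}(\Psi_{n,\delta b})_\gamma = f_\gamma$, because $R=(H_{\delta b,b_0}-z)^{-1}$ is the resolvent of the \emph{untwisted} operator $H_{\delta b,b_0}$, while the sandwiching operator is the \emph{twisted} one $H^{\delta b}_{\delta b,b_0}=\{\e^{\I\delta b\phi(\gamma,\gamma')}[H_{\delta b,b_0}]_{\gamma,\gamma'}\}$. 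The resolvent identity $(H_{\delta b,b_0}-z)R=\id$ you invoke is for the wrong operator. The discrepancy $(H^{\delta b}_{\delta b,b_0}-H_{\delta b,b_0})R$ carries the factor $\e^{\I\delta b\phi(\gamma,\gamma')}-1$, whose modulus is of order $|\delta b|\,|\phi(\gamma,\gamma')| \lesssim |\delta b|\,|\gamma|\,|\gamma'|$ by \eqref{eq:phiineq}, hence is \emph{unbounded}; it cannot be hidden in an $O(|\delta b|^{1/2})$ error.

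The same defect appears in your phase estimate. By the very definition $\fl(x,y,z)=\phi(x,y)+\phi(y,z)-\phi(x,z)$ one has $\phi(\gamma,y)-\phi(\gamma',y)=\phi(\gamma,\gamma')-\fl(\gamma,\gamma',y)$ with $y=n|\delta b|^{-1/2}$, so the difference you try to bound contains the term $\phi(\gamma,\gamma')$, which grows like $|\gamma||\gamma'|$ and is \emph{not} $\lesssim |\gamma-\gamma'|\,|y|$; likewise your use of \eqref{eq:phiineq} on $\partial_x\phi$ produces an extra unbounded factor $\max(|\gamma|,|\gamma'|)$, and your own inequality $|n||\delta b|^{-1/2}\lesssim |\gamma|+|\delta b|^{-1/2}$ leaks an unbounded $|\gamma|$. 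The missing idea is to compare $\e^{\I\delta b\phi(\gamma,\gamma'')}g^{+}_{\gamma'',n,\delta b}$ (not $g^{+}_{\gamma'',n,\delta b}$ alone) with $g^{+}_{\gamma,n,\delta b}$, and to reorganize the combined phase via the flux identity $\phi(\gamma,\gamma'')+\phi(\gamma'',y)=\phi(\gamma,y)+\fl(\gamma,\gamma'',y)$. Then the large linking phases telescope, the main term genuinely collapses to $\id$, and the residual is $\e^{\I\delta b\fl(\gamma,\gamma'',y)}-1$, controlled through the \emph{area} bound \eqref{eq:fluxestimate}: $|\fl(\gamma,\gamma'',y)|\lesssim |\gamma-\gamma''|\,|\gamma''-y|\lesssim |\gamma-\gamma''|\,|\delta b|^{-1/2}$ on $\Supp\tilde g_{n,\delta b}$ via property~\ref{it:tildegestimat}, with no stray $|\gamma|$. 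This is what ultimately produces the single clean factor $|\delta b|^{1/2}|\gamma-\gamma''|$ that is then absorbed by the Schur-type operator $S$ and Lemma~\ref{lem:xseq}. The rest of your bookkeeping (the amplitude difference via property~\ref{it:meanvalue}, the $S$-operator, the $\ell^2$ bound from Lemma~\ref{lem:xseq}) is the right structure, but without the flux identity the estimate cannot close.
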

\begin{proof}
	To shorten our notation we write
	\begin{align*}
	R_{\delta b,z}\coloneqq(H_{\delta b,b_0}-z)^{-1}.
	\end{align*}
	In order to prove this result we want to obtain the following decomposition 
	\begin{align}\label{eq:kasper}
	(H_{\delta b,b_0}^{\delta b}-z)W_{\delta b}(R_{\delta b,z})&= R_1+R_2+R_3
	\end{align}
	where
	\begin{align*}
	R_1&\coloneqq \Big\{\sum_{\gamma''\in \Z^d}\e^{\I \delta b \phi(\gamma,\gamma'')}[H_{\delta b,b_0}-z]_{\gamma,\gamma''}[W^{(1)}_{\delta b,\gamma}]_{\gamma'',\gamma'}\Big\}_{\gamma,\gamma'\in \Z^d},\\
	R_2&\coloneqq\Big\{\sum_{\gamma''\in \Z^d}[H_{\delta b,b_0}-z]_{\gamma,\gamma''}[W^{(2)}_{\delta b,\gamma}]_{\gamma'',\gamma'}\Big\}_{\gamma,\gamma'\in \Z^d},\\
	R_3&\coloneqq\Big\{\sum_{\gamma''\in \Z^d}[H_{\delta b,b_0}-z]_{\gamma,\gamma''}[W^{(3)}_{\delta b,\gamma}]_{\gamma'',\gamma'}\Big\}_{\gamma,\gamma'\in \Z^d},
	\end{align*}
	for some suitable operators $W_{\delta b,\gamma}^{(1)}, W_{\delta b,\gamma}^{(2)}, W_{\delta b,\gamma}^{(3)}$. To finish the proof we will then show that $R_3=\id$ and that
	\begin{align}\label{eq:Rbound}
	\max\{\norm{R_1},\norm{R_2}\}\leq\frac{C\abs{\delta b}^{1/2}}{\dist(z,\sigma(H_{\delta b,b_0}))},
	\end{align}
	for some constant $C$.
	
	We start by constructing the operators $W_{\delta b,\gamma}^{(1)}, W_{\delta b,\gamma}^{(2)}, W_{\delta b,\gamma}^{(3)}$. Since 
	\begin{align*}
	H_{\delta b,b_0}^{\delta b}-z= \{\e^{\I \delta b \phi(\gamma,\gamma')}[H_{\delta b,b_0}-z]_{\gamma,\gamma'}\}_{(\gamma,\gamma')\in V_{\delta b}},
	\end{align*}
	and $[H_{\delta b,b_0}-z]_{\gamma,\gamma'}=0$ whenever $\abs{\gamma-\gamma'}\geq \abs{\delta b}^{-1/2}$ these operators must be chosen such that for arbitrary $\gamma',\gamma''\in \Z^d$ we have
	\begin{align*}
	[W_{\delta b}(R_{\delta b,z})]_{\gamma'',\gamma'}= [W_{\delta b,\gamma}^{(1)}]_{\gamma'',\gamma'}+\e^{-\I \delta b \phi(\gamma,\gamma'')}([W^{(2)}_{\delta b,\gamma}]_{\gamma'',\gamma'}+[W^{(3)}_{\delta b,\gamma}]_{\gamma'',\gamma'})
	\end{align*}
	whenever $\abs{\gamma-\gamma''}<\abs{\delta b}^{-1/2}$. 
	By~\ref{it:tildegneps} and the identity
	\begin{align*}
	\e^{\I \delta b (\phi(\gamma,\gamma'')+\phi(\gamma'',n\abs{\delta b}^{-1/2}))}=e ^{\I \delta b \phi(\gamma,n\abs{\delta b}^{-1/2})}(1+\e^{\I \delta b \fl(\gamma,\gamma'',n\abs{\delta b}^{-1/2})}-1),
	\end{align*}
	which hold for all $\gamma,\gamma''\in \Z^d$, it is possible to verify that defining
	\begin{align*}
	W_{\delta b,\gamma}^{(1)}&\coloneqq\!\Big\{\!\sum_{n\in \Z^d} \e^{\I \delta b \phi(\gamma'',n\abs{\delta b}^{-1/2})}(g_{n,\delta b}(\gamma'')- g_{n,\delta b}(\gamma))\tilde{g}_{n,\delta b}(\gamma'')(R_{\delta b,z})_{\gamma'',\gamma'}g_{\gamma',n,\delta b}^{-}\!\Big\}_{\!\gamma'',\gamma'\in \Z^d},\\
	W_{\delta b,\gamma}^{(2)}&\coloneqq \! \Big\{\!\sum_{n\in \Z^d}\e^{\I \delta b \phi(\gamma,n\abs{\delta b}^{-1/2})} (\e^{\I \delta b \fl(\gamma,\gamma'',n\abs{\delta b}^{-1/2})}-1)g_{n,\delta b}(\gamma)\tilde{g}_{n,\delta b}(\gamma'')(R_{\delta b,z})_{\gamma'',\gamma'}g_{\gamma',n,\delta b}^{-}\!\Big\}_{\!\gamma'',\gamma'\in \Z^d},\\
	W_{\delta b,\gamma}^{(3)}&\coloneqq\! \Big\{\!\sum_{n\in \Z^d} \e^{\I \delta b \phi(\gamma,n\abs{\delta b}^{-1/2})}g_{n,\delta b}(\gamma)\tilde{g}_{n,\delta b}(\gamma'')(R_{\delta b,z})_{\gamma'',\gamma'}g_{\gamma',n,\delta b}^{-}\!\Big\}_{\!\gamma'',\gamma'\in \Z^d},
	\end{align*}
	gives the desired decomposition of~\eqref{eq:kasper}. 
	
	By using the definition of $W_{\delta b,\gamma}^{(3)}$ it follows that $R_3=\id$. To achieve estimate~\eqref{eq:Rbound} let $f=(f_\gamma)\in \sH$ be arbitrary. Our strategy is to bound the quantity $\norm{(R_jf)_\gamma}_{L^2(\Omega)}$, $j=1,2$, by a product of an operator in $B(\ell^2(\Z^d))$ and a vector in $\ell^2(\Z^d)$.
	
	Let $S\colon \ell^2(\Z^d)\to \ell^2(\Z^d)$ be the integral operator with kernel
	\begin{align*}
	S(\gamma,\gamma')=\abs{\gamma-\gamma'}\norm{[H_{\delta b,b_0}-z]_{\gamma,\gamma'}}_{L^2(\Omega)},
	\end{align*}
	and let $x=(x_\gamma)\in \ell^2(\Z^d)$ be given as in Lemma~\ref{lem:xseq}, i.e.\
	\begin{align*}
	x_{\gamma}\coloneqq\sum_{n\in \Z^d} \tilde{g}_{n,\delta b}(\gamma)\norm{((H_{\delta b,b_0}-z)^{-1} \Psi_{n,\delta b})_{\gamma}}_{L^2(\Omega)}.
	\end{align*}
	By~\ref{it:meanvalue} and the triangle inequality we get
	\begin{align*}
	\norm{(R_1f)_\gamma}_{L^2(\Omega)}&\leq\abs{\delta b}^{1/2}(Sx)_{\gamma},
	\end{align*}
	and from~\ref{it:tildegestimat},~\eqref{eq:fluxbound} and~\eqref{eq:complexexponentialinequality} we obtain
	\begin{align*}
	\norm{(R_2f)_\gamma}_{L^2(\Omega)}&\leq C_{r}\abs{\delta b}^{1/2}(Sx)_{\gamma},
	\end{align*}
	for some appropriate constant $C_{r}$. From~\eqref{eq:Aggbdecay} and a Schur-Holmgren type result for $\ell^2(\Z^d)$ it follows that $S$ is bounded. By Lemma~\ref{lem:xseq} we thus obtain the bound~\eqref{eq:Rbound} for both $R_1$ and $R_2$.
\end{proof}

\begin{proof}[Proof of Lemma~\ref{lem:hausdorffdistancelemma}]
	Since $H_{\delta b,b_0}^{\delta b}$ is self-adjoint it suffices to consider only real values of $z$. Suppose that $x\in \R$ with $\dist(x,\sigma(H_{\delta b,b_0}))>2C\abs{\delta b}^{1/2}$ and choose $\delta_0>0$ such that $z\in \rho(H_{\delta b,b_0})$ whenever $\abs{z-x}<\delta_0$. For any $\delta\in\R$ with $0<\abs{\delta}<\delta_0$ we define $z_\delta=x+\I \delta$. By Lemma~\ref{lem:Weps} and Lemma~\ref{lem:pseudoinverse} we have the estimates
	\begin{align*}
	\norm{W_{\delta b}((H_{\delta b,b_0}-z_\delta)^{-1})}\leq \frac{(v_r+1)^{1/2}}{\dist(z_\delta,\sigma(H_{\delta b,b_0}))}\leq \frac{(v_r+1)^{1/2}}{\dist(x,\sigma(H_{\delta b,b_0}))},
	\end{align*}
	and
	\begin{align*}
	\norm{T_{z_\delta}}\leq \frac{C\abs{\delta b}^{1/2}}{\dist(z_\delta,\sigma(H_{\delta b,b_0}))}<\frac{1}{2},
	\end{align*}
	for all $0<\abs{\delta}<\delta_0$. Using these estimates together with Lemma~\ref{lem:pseudoinverse} gives 
	\begin{align*}
	(H_{\delta b,b_0}^{\delta b}-z_\delta)^{-1} =W_{\delta b}((H_{\delta b,b_0}-z_\delta)^{-1})(\id+T_{z_\delta})^{-1}
	\end{align*}
	and that $(H_{\delta b,b_0}^{\delta b}-z_\delta)^{-1}$ is bounded uniformly for such $\delta$. Factorizing
	\begin{align*}
	H_{\delta b,b_0}^{\delta b}-x=(\id+\I \delta(H_{\delta b,b_0}^{\delta b}-z_\delta)^{-1})(H_{\delta b,b_0}^{\delta b}-z_\delta),
	\end{align*}
	and choosing $\delta$ sufficiently small concludes the proof.
\end{proof}

\begin{remark}\label{rem:rembs1}
If we define the operator $\tilde{W}_{\delta b}$ on $B(\sH)$ by  
\begin{align*}
\tilde{W}_{\delta b}(R)\coloneqq\Big \{\sum_{n\in \Z^d} g_{\gamma,n,\delta b}^{-}R_{\gamma,\gamma'} g_{\gamma',n,\delta b}^{+}\Big\}_{\gamma,\gamma'\in \Z^d},
\end{align*}
and interchange the roles of $H_{\delta b,b_0}^{\delta b}$ and $H_{\delta b,b_0}$ it is possible to repeat the proofs of Lemma~\ref{lem:Weps}, Lemma~\ref{lem:xseq}, Lemma~\ref{lem:pseudoinverse} and Lemma~\ref{lem:hausdorffdistancelemma} to obtain the result in Lemma~\ref{lem:hausdorffdistancelemma1}.
\end{remark}

\section{Proof of Theorem~\ref{thm:main}(3)}\label{sec:proof3}
In this part of the proof we adopt the notation in~\eqref{eq:Hbnotation}. Recall that we now assume that $B$ is a constant magnetic field. Thus $\phi$ is bilinear and 
\begin{align}\label{eq:phiidentity} 
\phi(x,y)+\phi(y,z)=\phi(x,z)+\phi(x-y,y-z),
\end{align}
for all $x,y,z\in \R^d$. 
\subsection{Regularity of extremal spectral values}
Let $b_0,b_0+\delta b\in [0,b_{\rm max}]$ for an arbitrary $b_0$ and sufficiently small $\delta b$. We only consider the case when $E_b$ is the maximum of the spectrum, the case when $E_b$ is the minimum is similar.  By~\eqref{eq:part21} there exists a constant $C$ such that
\begin{align*}
\abs{E_{b_0+\delta b} - \sup \sigma(H^{\delta b}_{b_0})} \leq \dH\big(\sigma(H_{b_0+\delta b}), \sigma(H^{\delta b}_{b_0})\big) &\leq C \abs{\delta b}\numberthis \label{eq:c1absepsestimate1}
\end{align*}
and by the triangle inequality and \eqref{eq:c1absepsestimate1} we get
\begin{align*}
\abs{E_{b_0+\delta b}-E_{b_0}} &\leq \abs{E_{b_0+\delta b}-\sup \sigma (H^{\delta b}_{b_0})} + \abs{ \sup \sigma ( H_{b_0}^{\delta b}) - E_{b_0}} \\
&\leq C\abs{\delta b} + \abs{\sup \sigma(H^{\delta b}_{b_0})-\sup \sigma(H_{b_0})}.
\end{align*}
Thus, it only remains to prove the following lemma.
\begin{lemma}\label{lem:lem}
	There exists some constant $C$ such that
	\begin{align}
	\sup \sigma(H^{\delta b}_{b_0}) &\leq \sup \sigma(H_{b_0}) + C\abs{\delta b},\label{eq:123321} \\
	\sup \sigma(H_{b_0}) &\leq \sup \sigma(H^{\delta b}_{b_0}) + C\abs{\delta b},\label{eq:321123}
	\end{align}
	hence
	\begin{align*}
	\abs{\sup \sigma(H^{\delta b}_{b_0})-\sup \sigma(H_{b_0})}\leq C\abs{\delta b}.
	\end{align*}
\end{lemma}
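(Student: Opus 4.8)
The plan is to establish the two inequalities in Lemma~\ref{lem:lem} by a quadratic form argument, exploiting that $H_{b_0}$ and $H^{\delta b}_{b_0}$ have the \emph{same} operator entries $\cA_{\gamma\gamma',b_0}$ and differ only through the Peierls phase: $H_{b_0}$ carries $\e^{\I b_0\phi(\gamma,\gamma')}$ while $H^{\delta b}_{b_0}$ carries $\e^{\I(b_0+\delta b)\phi(\gamma,\gamma')}$. Since both operators are self-adjoint and bounded, $\sup\sigma(H^{\delta b}_{b_0})=\sup_{\norm{f}=1}\ip{H^{\delta b}_{b_0}f}{f}$, so it suffices to find, for each normalized $f=(f_\gamma)\in\sH$, a normalized $\tilde f$ with $\ip{H^{\delta b}_{b_0}f}{f}\leq \ip{H_{b_0}\tilde f}{\tilde f}+C\abs{\delta b}$ (and symmetrically for~\eqref{eq:321123}). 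The natural candidate is a gauge-type modification $\tilde f_\gamma\coloneqq \e^{\I\delta b\,\psi(\gamma)}f_\gamma$ for a suitable real phase $\psi$ chosen so that the extra $\delta b$-phase in the Peierls factor is locally absorbed.

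The key point is the cocycle identity~\eqref{eq:phiidentity}, valid because $\phi$ is bilinear for a constant field: $\phi(\gamma,\gamma')=\phi(\gamma,0)+\phi(0,\gamma')+\phi(\gamma-0,0-\gamma')$, or more usefully, $\delta b\,\phi(\gamma,\gamma')$ can be rewritten using the flux $\fl$ through the triangle $(\gamma,\gamma',0)$ so that $\e^{\I\delta b\phi(\gamma,\gamma')}=\e^{\I\delta b\phi(\gamma,\gamma'')}\e^{\I\delta b\phi(\gamma'',\gamma')}\e^{-\I\delta b\fl(\gamma,\gamma'',\gamma')}$. Unlike the situation in Section~\ref{sec:proof2}, here we are \emph{not} truncating, so we cannot use $\abs{\gamma-\gamma'}<\abs{\delta b}^{-1/2}$ to control the flux. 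Instead I would choose $\psi$ linear, say $\psi(\gamma)=\phi(\gamma,\eta)$ for a fixed reference $\eta$, or more symmetrically split the phase; then
\begin{align*}
\ip{H^{\delta b}_{b_0}\tilde f}{\tilde f}=\sum_{\gamma,\gamma'}\e^{\I(b_0+\delta b)\phi(\gamma,\gamma')}\e^{\I\delta b(\psi(\gamma')-\psi(\gamma))}\ip{\cA_{\gamma\gamma',b_0}f_{\gamma'}}{f_\gamma},
\end{align*}
and the difference from $\ip{H_{b_0}f}{f}$ involves the factor $\e^{\I\delta b(\phi(\gamma,\gamma')+\psi(\gamma')-\psi(\gamma))}-1$. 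The choice of $\psi$ must make $\phi(\gamma,\gamma')+\psi(\gamma')-\psi(\gamma)$ vanish; by bilinearity $\phi(\gamma,\gamma')=\frac12\gamma^\top B\gamma'$ is \emph{not} a coboundary globally, so no single linear $\psi$ works. The correct move is therefore the local one used already in Section~\ref{sec:proof2}: employ the partition of unity $\sum_n g^2_{n,\delta b}=1$ together with the gauge phases $g^{\pm}_{\gamma,n,\delta b}$, so that on each patch of radius $\sim\abs{\delta b}^{-1/2}$ the phase is absorbed exactly, and the mismatch between patches contributes the flux $\fl(\gamma,\gamma'',n\abs{\delta b}^{-1/2})$, which by~\eqref{eq:fluxestimate} is bounded by the triangle area $\lesssim\abs{\delta b}^{-1}$, times the factor $\delta b$ from the exponent and another $\delta b$ from... no: here the gain is different. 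On a patch the relevant triangle has vertices within distance $\abs{\delta b}^{-1/2}$, giving area $\lesssim\abs{\delta b}^{-1}$, so $\delta b\cdot\abs{\delta b}^{-1}=O(1)$, which is \emph{not} small. Hence I would instead keep $g_{n,\delta b}$ with a patch size independent of $\delta b$ (fixed radius $r$), so that $\abs{g_{n}(\gamma)-g_{n}(\gamma')}\lesssim\abs{\gamma-\gamma'}$ and the triangle flux across a fixed-size patch is $O(1)$ in area but multiplied by $\delta b$; combined with the rapid decay~\eqref{eq:Aggbdecay} of $\cA_{\gamma\gamma',b_0}$ which forces $\abs{\gamma-\gamma'}$ bounded on the support of the relevant summands, this yields the clean bound $O(\abs{\delta b})$.

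Concretely, the key steps are: (i) write $\sup\sigma(H^{\delta b}_{b_0})$ as a supremum of quadratic forms and insert the resolution of identity $\sum_n g_n^2=1$ (fixed-radius $g_n$, properties as in Section~\ref{sec:proof3}'s analogue of the $g_{n,\delta b}$); (ii) for each $n$ transfer the form to the vector $(\e^{\I\delta b\phi(\gamma,n)}g_n(\gamma)f_\gamma)_\gamma$, using~\eqref{eq:phiidentity} to peel off $\e^{\I\delta b\phi(\gamma,\gamma')}=\e^{\I\delta b\phi(\gamma,n)}\e^{\I\delta b\phi(n,\gamma')}\e^{\I\delta b\fl(\gamma,n,\gamma')}$, so that the two single-variable phases recombine into the gauge factors of $H_{b_0}$ acting on the modified vector, leaving only the triangle-flux factor $\e^{\I\delta b\fl(\gamma,n,\gamma')}$ and the phase mismatch $g_n(\gamma)-g_n(\gamma')$ as errors; (iii) estimate those errors: $\abs{\e^{\I\delta b\fl}-1}\leq\abs{\delta b}\abs{\fl(\gamma,n,\gamma')}\leq C\abs{\delta b}\jn{\gamma-\gamma'}^2$ by~\eqref{eq:fluxestimate} wait—$\Delta\leq\abs{\gamma-n}\abs{\gamma'-n}$, but on the support of $g_n$ both are $\leq r$, so $\Delta\leq r^2$, hence the error is $\leq C_r\abs{\delta b}$; similarly $\abs{g_n(\gamma)-g_n(\gamma')}\leq C_g\abs{\gamma-\gamma'}$, combined with $\norm{\cA_{\gamma\gamma',b_0}}\leq C_N\jn{\gamma-\gamma'}^{-N}$ the sum converges and contributes $O(\abs{\delta b})$ after using a Schur--Holmgren estimate and $\sum_n\norm{(g_nf)}^2=\norm{f}^2$; (iv) conclude $\ip{H^{\delta b}_{b_0}f}{f}\leq\sup\sigma(H_{b_0})+C\abs{\delta b}$, hence~\eqref{eq:123321}; (v) run the argument with the roles of $b_0$ and $b_0+\delta b$ (equivalently $\delta b$ and $-\delta b$) interchanged, or simply apply~\eqref{eq:123321} with $b_0\rightsquigarrow b_0+\delta b$ and $\delta b\rightsquigarrow-\delta b$, to get~\eqref{eq:321123}. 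The main obstacle is bookkeeping in step (ii): verifying that, after the partition of unity and the phase peeling, the leading term is \emph{exactly} the quadratic form of $H_{b_0}$ on the normalized family $(g_nf)_n$ (so that $\sum_n\ip{H_{b_0}(g_nf)}{(g_nf)}\leq\sup\sigma(H_{b_0})\sum_n\norm{g_nf}^2=\sup\sigma(H_{b_0})\norm{f}^2$), with everything else genuinely $O(\abs{\delta b})$; the self-adjointness of $H_{b_0}$ and the localization~\eqref{eq:Aggbdecay} are what make this work, and the bilinearity~\eqref{eq:phiidentity} is essential for the phase peeling to be exact rather than merely approximate.
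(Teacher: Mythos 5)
There is a genuine gap in your step (iii). You settle on a partition of unity with \emph{fixed} radius $r$, and you need the leading term to be exactly the quadratic form of $H_{b_0}$ on the family $(\Phi_n)_\gamma = g_n(\gamma)\e^{\I\delta b\phi(\gamma,n)}f_\gamma$. But
\begin{align*}
\sum_n\ip{H_{b_0}\Phi_n}{\Phi_n}_\sH = \sum_{\gamma,\gamma'}\e^{\I(b_0+\delta b)\phi(\gamma,\gamma')}\Big[\sum_n g_n(\gamma)g_n(\gamma')\e^{\I\delta b\fl(\gamma,n,\gamma')}\Big]\ip{\cA_{\gamma\gamma',b_0}f_{\gamma'}}{f_\gamma},
\end{align*}
so comparing with $\ip{H_{b_0}^{\delta b}f}{f}$ the error carries the factor $1-\sum_n g_n(\gamma)g_n(\gamma')\e^{\I\delta b\fl}$. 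Splitting off the flux piece (which you correctly bound by $C_r\abs{\delta b}$) leaves the localization error
\begin{align*}
1-\sum_n g_n(\gamma)g_n(\gamma') = \sum_n g_n(\gamma)\bigl(g_n(\gamma)-g_n(\gamma')\bigr) = \tfrac12\sum_n\bigl(g_n(\gamma)-g_n(\gamma')\bigr)^2,
\end{align*}
which for fixed-radius cutoffs is $O(\abs{\gamma-\gamma'}^2)$ with \emph{no} factor of $\delta b$ whatsoever. The rapid decay of $\norm{\cA_{\gamma\gamma',b_0}}$ only makes the resulting sum converge; it does not produce the extra $\abs{\delta b}$. So this term is $O(1)$, not $O(\abs{\delta b})$, and the bound~\eqref{eq:123321} does not follow. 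This is essentially the IMS double-commutator error; it cannot be made small by choosing the patch size independent of $\delta b$.

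You also considered scaling the cutoffs at radius $\abs{\delta b}^{-1/2}$, which does make the localization error $O(\abs{\delta b}\abs{\gamma-\gamma'}^2)$, but you discarded this because the naive flux bound gives $\delta b\cdot\Delta\sim\delta b\cdot\abs{\delta b}^{-1}=O(1)$. The ingredient you are missing is the cancellation the paper actually exploits: because $\phi$ is antisymmetric and the averaging weight is symmetric, the \emph{linear} term in $\delta b$ of the flux contribution vanishes, so the flux error is $O((\delta b)^2\cdot\text{scale}^2)=O(\abs{\delta b})$ at scale $\abs{\delta b}^{-1/2}$, not $O(1)$. The paper realizes this cleanly by averaging against the heat kernel $G(\gamma,\cdot,t)G(\cdot,\gamma',t)$ (a \emph{continuous} Gaussian resolution of identity), writing $\e^{\I\delta b\phi(\gamma,\gamma')}$ as $(8\pi t)^{d/2}\Lambda_{\gamma,\gamma',t}(\e^{\I\delta b\phi(\gamma,\cdot)}\e^{\I\delta b\phi(\cdot,\gamma')})$ minus two correction terms: one $\e^{-\abs{\gamma-\gamma'}^2/8t}-1=O(\abs{\gamma-\gamma'}^2/t)$ (the analogue of your localization error), and one where the Gaussian's symmetry forces $\Lambda_{\gamma,\gamma',t}(\fl)=0$, leaving only $O((\delta b)^2\abs{\gamma-\gamma'}^2 t)$. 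Choosing $t=1/\abs{\delta b}$ balances both to $O(\abs{\delta b})$. Your discrete scheme does not automatically inherit the exact odd-symmetry cancellation, and at fixed scale the localization error blocks the argument entirely; this is the decisive difference between your approach and the paper's.
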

Before we prove this proposition we consider the fundamental solution to the heat equation, as it is an essential part of the proof. The fundamental solution is given by 
\begin{align}\label{eq:fundamentalsolutionheatequation}
G(y,y',t)=\frac{1}{(4\pi t)^{d/2}}\e^{-\abs{y-y'}^2/4t}
\end{align}
which is symmetric in the spatial coordinates and by semi-group theory satisfies 
\begin{align}\label{eq:fundamentalsolutionheateqidentity}
G(y,y'',2t)=\int_{\R^\dim}G(y,y',t)G(y',y'',t)\d y'.
\end{align}
By letting $y=y''$ we get that
\begin{align}\label{eq:fundamentalheatequationyequaltydobbeltdot}
\int_{\R^\dim} \abs{G(y,y',t)}^2\d y'=G(y,y,2t) =\frac{1}{(8\pi t)^{\dim/2}}.
\end{align}
To simplify our notation we define the linear functional $\Lambda_{\gamma,\gamma',t}$ by 
\begin{align*}
\Lambda_{\gamma,\gamma',t}f\coloneqq\int_{\R^d}f(y') G(\gamma,y',t)G(y',\gamma',t)\d y'.
\end{align*}
By \eqref{eq:phiidentity}, \eqref{eq:fundamentalsolutionheatequation} and \eqref{eq:fundamentalsolutionheateqidentity} we get
\begin{align*}
\Lambda_{\gamma,\gamma',t}(\e^{\I\delta b\phi(\gamma,\cdot)}\e^{\I\delta b\phi(\cdot,\gamma')})& =\Lambda_{\gamma,\gamma',t} \Bigg(\e^{\I\delta b\phi(\gamma,\gamma')} \Big(\e^{\I\delta b\phi(\gamma-\cdot,\cdot-\gamma')}-1+1\Big)\Bigg)\\
&=\e^{\I\delta b\phi(\gamma,\gamma')}G(\gamma,\gamma',2t) + \e^{\I\delta b\phi(\gamma,\gamma')} \Lambda_{\gamma,\gamma',t}\Big(\e^{\I\delta b\phi(\gamma-\cdot,\cdot-\gamma')}-1\Big)\\
&=\e^{\I\delta b\phi(\gamma,\gamma')} \bigg(\frac{1}{(8\pi t)^{d/2}}+\frac{1}{(8\pi t)^{d/2}}\Big(\e^{-\abs{\gamma-\gamma'}^2/8t}-1\Big)\bigg) \\
&\phantom{{}={}}+ \e^{\I\delta b\phi(\gamma,\gamma')} \Lambda_{\gamma,\gamma',t}\Big(\e^{\I\delta b\phi(\gamma-\cdot,\cdot-\gamma')}-1\Big).
\end{align*}
Rearranging the above equation gives for any $\delta b \in \R$ and $\gamma,\gamma'\in \Z^d$
\begin{align*}\label{eq:identityeksponential}
\e^{\I\delta b\phi(\gamma,\gamma')}&= (8\pi t)^{d/2}\Lambda_{\gamma,\gamma',t}(\e^{\I\delta b \phi(\gamma,\cdot)}\e^{\I\delta b\phi(\cdot,\gamma')})- \e^{\I\delta b\phi(\gamma,\gamma')}\bigg[ \Big(\e^{-\abs{\gamma-\gamma'}^2/8t}-1\Big)\\
&\phantom{{}={}}+(8\pi t)^{d/2} \Lambda_{\gamma,\gamma',t}\Big(\e^{\I\delta b\phi(\gamma-\cdot,\cdot-\gamma')}-1\Big) \bigg]\\
&= \mathrm{I}-\e^{\I \delta b \phi(\gamma,\gamma')}[\mathrm{II}+\mathrm{III}],\numberthis
\end{align*}
where
\begin{align*}
\mathrm{I}&:=(8\pi t)^{d/2}\Lambda_{\gamma,\gamma',t}(\e^{\I\delta b \phi(\gamma,\cdot)}\e^{\I\delta b\phi(\cdot,\gamma')}),\\
\mathrm{II}&:=\e^{-\abs{\gamma-\gamma'}^2/8t}-1,\\
\mathrm{III}&:=(8\pi t)^{d/2} \Lambda_{\gamma,\gamma',t}\Big(\e^{\I\delta b\phi(\gamma-\cdot,\cdot-\gamma')}-1\Big).
\end{align*}
\begin{proof}[Proof of Lemma~\ref{lem:lem}]
	Recall that for a self-adjoint operator $T$ on a separable Hilbert space we have
	\begin{align}\label{eq:spectraltheoremequality}
	\sup_{\norm{x}=1} \ip{Tx}{x}=\sup \sigma(T).
	\end{align}
	To show the first inequality, let $f \in \sH$ with $\norm{f}_\sH=1$. By using \eqref{eq:identityeksponential} we get
	\begin{align*}\label{eq:etplusto}
	\ip{H^{\delta b}_{b_0}f}{f}_\sH &= \sum_{\gamma,\gamma' \in \Z^\dim} \e^{\I\delta b\phi(\gamma,\gamma')}\e^{\I b_0\phi(\gamma,\gamma')} \ip{\cA_{\gamma\gamma',b_0}f_{\gamma'}}{f_\gamma}_{L^2(\Omega)}\\
	&=\sum_{\gamma,\gamma'\in \Z^d}(\mathrm{I}-\e^{\I \delta b \phi(\gamma,\gamma')}[\mathrm{II}+\mathrm{III}])\e^{\I b_0\phi(\gamma,\gamma')} \ip{\cA_{\gamma\gamma',b_0}f_{\gamma'}}{f_\gamma}_{L^2(\Omega)}.\numberthis
	\end{align*}
	We first consider the series involving $\mathrm{I}$. Since $G(y',\gamma,t)=G(\gamma,y',t)$ we can define $\Phi_{\delta b,y',t}\in \sH$ by
	\begin{align*}
	(\Phi_{\delta b,y',t})_\gamma\coloneqq \e^{\I\delta b \phi(y',\gamma)}G(y',\gamma,t)f_\gamma,
	\end{align*}
	to get
	\begin{align*}
	\sum_{\gamma,\gamma'\in\Z^d}\mathrm{I}\e^{\I b_0\phi(\gamma,\gamma')} \ip{\cA_{\gamma\gamma',b_0}f_{\gamma'}}{f_\gamma}_{L^2(\Omega)}&=(8\pi t)^{\dim/2}\int_{\R^\dim} \sum_{\gamma \in \Z^\dim} \ip{(H_{b_0} \Phi_{\delta b,y',t})_\gamma}{(\Phi_{\delta b,y',t})_\gamma}_{L^2(\Omega)} \d y' \\
	&=(8\pi t)^{\dim/2}\int_{\R^\dim} \ip{H_{b_0} \Phi_{\delta b,y',t}}{\Phi_{\delta b,y',t}}_{\sH} \d y' \\
	&\leq \sup \sigma(H_{b_0}) (8\pi t)^{\dim/2} \int_{\R^\dim} \sum_{\gamma \in \Z^\dim} \abs{ G(y',\gamma,t)}^2 \norm{f_\gamma}^2_{L^2(\Omega)} \d y' \\
	&= \sup \sigma (H_{b_0}),
	\end{align*}
	where we in the inequality have used \eqref{eq:spectraltheoremequality} and in the last equality \eqref{eq:fundamentalheatequationyequaltydobbeltdot}.
	
	Note that by this and since the left hand side of \eqref{eq:etplusto} is real it follows that the series involving $\mathrm{II}$ and $\mathrm{III}$ must be real.
	
	Next we note that
	\begin{align*}
	\mathrm{II}\leq \babss{\e^{-\abs{\gamma-\gamma'}^2/8t}-1} \leq \frac{\abs{\gamma-\gamma'}^2}{8t}.
	\end{align*}
	
	We now consider $\mathrm{III}$. The antisymmetry of the matrix $B$ in the magnetic field $\phi$ and the coordinate change $x=y'-(\gamma+\gamma')/2$ implies that 
	\begin{align*}
	\Lambda_{\gamma,\gamma't}(\phi(\gamma-\cdot,\cdot-\gamma'))&= \int_{\R^\dim} \phi(\gamma-y',y'-\gamma')G(\gamma,y',t)G(y',\gamma',t) \d y' \\
	&=\frac{1}{(4\pi t)^{d}}\int_{\R^\dim} \phi\Big(\frac{\gamma-\gamma'}{2}-x,x+\frac{\gamma-\gamma'}{2}\Big)\e^{-\frac{1}{4t}(\abs{\frac{\gamma-\gamma'}{2}-x}^2+\abs{x+\frac{\gamma-\gamma'}{2}}^2)} \d x\\
	&=0,
	\end{align*}
	where the last equality comes from the fact that $\phi$ is antisymmetric and the exponential factor is symmetric in $x$ and $\frac{\gamma-\gamma'}{2}$.
	Using this together with the inequality
	\begin{align*}
	\abs{\e^{\I\delta b x} - 1 - \I \delta b x} \leq  \abs{\delta b x}^2,
	\end{align*}
	which holds for $x\in \R$, gives that
	\begin{align*}
	\Lambda_{\gamma,\gamma',t}(\e^{\I\delta b \phi(\gamma-\cdot,\cdot-\gamma')}-1)&\leq  \babs{\Lambda_{\gamma,\gamma',t}(\e^{\I\delta b \phi(\gamma-\cdot,\cdot-\gamma')}-1)- \I \delta b \Lambda_{\gamma,\gamma',t}(\phi(\gamma-\cdot,\cdot-\gamma'))}\\ 
	& \leq  (\delta b)^2 \Lambda_{\gamma,\gamma',t}(\abs{\phi(\gamma-\cdot,\cdot-\gamma')}^2).
	\end{align*}
	Since $\phi(\gamma-y',y'-\gamma)=\phi(\gamma-y'+\gamma'-\gamma',y'-\gamma')=\phi(\gamma-\gamma',y'-\gamma')$ and
	\begin{align*}
	\abs{\phi(\gamma-\gamma',y'-\gamma')}^2\leq C \abs{\gamma-\gamma'}^2 \abs{y'-\gamma'}^2
	\end{align*}
	it follows that 
	\begin{align*}
	(\delta b)^2\Lambda_{\gamma,\gamma',t}(\abs{\phi(\gamma-\cdot,\cdot-\gamma')}^2) &\leq C  (\delta b)^2 \abs{\gamma-\gamma'}^2 \Lambda_{\gamma,\gamma',t}(\abs{y'-\gamma'}^2).
	\end{align*}
	Using that $\abs{y'-\gamma'}^2 \leq \abs{\gamma-y'}^2+\abs{y'-\gamma'}^2$ and changing to polar coordinates implies 
	\begin{align*}
	C  (\delta b)^2 \abs{\gamma-\gamma'}^2 \int_{\R^\dim}G(\gamma,y',t)G(y',\gamma',t)\abs{y'-\gamma'}^2 \d y' \leq C (\delta b)^2 \abs{\gamma-\gamma'}^2 \frac{t}{t^{d/2}}.
	\end{align*}
	Thus we have shown that
	\begin{align*}
	\mathrm{III}=(8\pi t)^{d/2}\Lambda_{\gamma,\gamma',t}(\e^{\I\delta b \phi(\gamma-\cdot,\cdot-\gamma')}-1)\leq C t^{d/2} (\delta b)^2 \abs{\gamma-\gamma'}^2 \frac{t}{t^{d/2}} = C (\delta b)^2\abs{\gamma-\gamma'}^2 t.
	\end{align*}
	Next we define the integral operator $\tilde{S} \colon \ell^2(\Z^\dim) \to \ell^2(\Z^\dim)$ by
	\begin{align*}
	(\tilde{S}x)_\gamma = \sum_{\gamma' \in Z^\dim} \abs{\gamma-\gamma'}^2 \norm{\cA_{\gamma\gamma',b_0}}x_{\gamma'},
	\end{align*}
	which by a Schur-Holmgren type result is a bounded operator.
	
	By inserting the previous estimates for $\mathrm{I},\mathrm{II}$ and $\mathrm{III}$ in \eqref{eq:etplusto} and using that $\tilde{S}\in B(\ell^2(\Z^d))$ we obtain
	\begin{align*}
	\ip{H_{b_0}^{\delta b} f}{f}_{\sH}&\leq \sup \sigma(H_{b_0}) + C\bigg[ \frac{1}{t}+  (\delta b)^2 t\bigg]
	\end{align*}
	Choosing $t= 1/\abs{\delta b}$ finishes the proof of~\eqref{eq:123321}.
	
	To show the second inequality \eqref{eq:321123}, note that by complex conjugation of \eqref{eq:identityeksponential} we obtain
	\begin{align*}
	\ip{H_{b_0} f}{f}_{\sH} &= \sum_{\gamma,\gamma \in \Z^\dim} \e^{\I b_0\phi(\gamma,\gamma')} \ip{\cA_{\gamma\gamma',b_0} f_{\gamma'}}{f_\gamma}_{L^2(\Omega)} \\
	&=\sum_{\gamma,\gamma \in \Z^\dim} \e^{-\I\delta b\phi(\gamma,\gamma')} \e^{\I (b_0+\delta b) \phi(\gamma,\gamma')} \ip{ \cA_{\gamma\gamma',b_0} f_{\gamma'}}{f_\gamma}_{L^2(\Omega)} \\
	&=\sum_{\gamma,\gamma'\in \Z^d}(\overline{\mathrm{I}}-\e^{-\I \delta b \phi(\gamma,\gamma')}[\overline{\mathrm{II}}+\overline{\mathrm{III}}])\e^{\I(b_0+\delta b)\phi(\gamma,\gamma')} \ip{\cA_{\gamma\gamma',b_0}f_{\gamma'}}{f_\gamma}_{L^2(\Omega)},
	\end{align*}
	thus the proof of \eqref{eq:321123} is analogue to the proof of \eqref{eq:123321}.
\end{proof}

\subsection{Regularity of gap edges}
Assume that the spectrum of $H_b$ has a gap i.e.\ $\sigma(H_b)=\sigma_1 \cup \sigma_2$ where $\sup\sigma_1 <\inf\sigma_2$, which does not close when $b$ varies in some interval $[b_1,b_2] \subset [0,b_{\mathrm{max}}]$. We will show that $e_b=\inf\sigma_2$ is Lipschitz continuous in $[b_1,b_2]$. The proof for $\sup \sigma_1$ is similar. 

Without loss of generality, up to a translation in energy, we can assume that $\sigma(H_b)\subset\,\, ]-\infty,0[$ for all $b\in [b_1,b_2]$. Let us fix some $b_0\in\,\, ]b_1,b_2[$ and consider small variations $\delta b$ such that $b_0+\delta b\in [b_1,b_2]$. By the fact that the gap does not close, and if $|\delta b|$ is small enough, we are able to choose a contour $\sC$ around $\sigma_2$ (with $\sigma_1$ exterior to $\sC$) which is independent of $\delta b$ such that the distance between $\sC$ and the spectrum of $H_{b_0+\delta b}$ remains positive, uniformly on $\delta b$. We define the operator
\begin{align*}
T_b\coloneqq\frac{\I}{2\pi}\int_\sC z (H_b - z)^{-1} \d z,
\end{align*}
whose spectrum equals $\sigma_2\cup\{0\}$ and hence $\inf\sigma(T_b)=e_b$. Therefore it is enough to show that the infimum of the spectrum of $T_b$ is Lipschitz continuous in $b$. We will do this in three steps. In what follows,  $C$ denotes  a generic positive constant. 
\subsubsection{Step 1.} 
Consider the operator $H_{b_0}^{\delta b}$ which is defined as in \eqref{eq:mainthm1} but with $\cA_{\gamma\gamma',b_0}$ instead of $\cA_{\gamma\gamma',b_0+\delta b}$, all other phases being left unchanged. From \eqref{eq:KS1} we have  
$$\| H_{b_0+\delta b}-H_{b_0}^{\delta b}\|\leq C |\delta b|.$$ 
Standard perturbation theory arguments imply that if $|\delta b|$ is small enough then $\sC$ is at a positive distance from the spectrum of $H_{b_0}^{\delta b}$ and moreover
\begin{align*}
\Bnorm{T_{b_0+\delta b} - \frac{\I}{2\pi} \int_{\sC}z(H_{b_0}^{\delta b} - z)^{-1} \d z } 
&\leq C \abs{\delta b}.
\end{align*}
Due to \eqref{eq:lax}, it follows that the difference between $e_{b_0+\delta b}$ and the infimum of the spectrum of $\frac{\I}{2\pi} \int_{\sC}z(H_{b_0}^{\delta b} - z)^{-1} \d z$ must be of order $\delta b$.  

\subsubsection{Step 2.} Let $\widetilde{T}_{b_0}^{\delta b}$ be defined as
\begin{align}\label{hc101}
[\widetilde{T}_{b_0}^{\delta b}]_{\gamma,\gamma'}\coloneqq \e^{\I\delta b \phi(\gamma,\gamma')} [T_{b_0}]_{\gamma,\gamma'}.
\end{align}
In what follows we will prove the estimate
\begin{align}\label{hc102}
\Bnorm{\widetilde{T}_{b_0}^{\delta b}-\frac{\I}{2\pi} \int_{\sC}z(H_{b_0}^{\delta b} - z)^{-1} \d z }\leq C |\delta b|,
\end{align}
which when combined with Step 1 and \eqref{eq:lax} gives
\begin{align}\label{hc105}
|e_{b_0+\delta b}-\inf\sigma (\widetilde{T}_{b_0}^{\delta b})|\leq C |\delta b|.
\end{align}

The rest of Step 2 is dedicated to the proof of \eqref{hc102}. 
We start with a technical result.
\begin{lemma}\label{lemma-horia}
	Let $z\in \sC$ and let $b=b_0+\delta b$ as above. Seen as an operator in $\sH=\ell^2(\Z^d;L^2(\Omega))$, the resolvent $(H_b-z)^{-1}$ is also written
	\begin{equation*}
	(H_b-z)^{-1} = \{ [ (H_b - z)^{-1} ]_{\gamma,\gamma'} \}_{\gamma,\gamma' \in \Z^d} \quad\text{with matrix elements}\quad [ (H_b - z)^{-1} ]_{\gamma,\gamma'}\in B(L^2(\Omega)).
	\end{equation*}
	For every $N \in \N$ there exists a constant $C_N$ independent of $b$ and $z$ such that
	\begin{equation*}
	\norm{[ (H_b-z)^{-1}]_{\gamma,\gamma'}} \leq C_N\jn{\gamma-\gamma'}^{-N}.
	\end{equation*}
\end{lemma}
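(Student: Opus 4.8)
The plan is to exploit the Combes--Thomas type argument adapted to the generalized matrix setting, using the off-diagonal decay~\eqref{eq:Aggbdecay} of the entries $\cA_{\gamma\gamma',b}$ together with the fact that $H_b$ is self-adjoint (so its resolvent is norm-bounded by $1/\dist(z,\sigma(H_b))$) and that $z\in\sC$ stays at a uniform positive distance from $\sigma(H_b)$. First I would introduce, for a parameter $\lambda\in\R$ and a fixed unit vector $e\in\R^d$ (or more symmetrically using $|\cdot|$ directly), the diagonal ``gauge'' multiplication operator $M_\lambda$ on $\sH$ given by $(M_\lambda f)_\gamma = \e^{\lambda\,\rho(\gamma)} f_\gamma$ where $\rho$ is a bounded-gradient approximation of $\gamma\mapsto |\gamma-\gamma_0|$ (bounded so that $M_\lambda$ is bounded and boundedly invertible). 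Conjugating, $H_b^{(\lambda)} := M_\lambda H_b M_\lambda^{-1}$ is again a generalized matrix whose $(\gamma,\gamma')$ entry is $\e^{\lambda(\rho(\gamma)-\rho(\gamma'))}$ times that of $H_b$; since $|\rho(\gamma)-\rho(\gamma')|\le |\gamma-\gamma'|$ and the original entries decay faster than any polynomial (hence, after choosing things carefully, fast enough to absorb an exponential factor $\e^{|\gamma-\gamma'|}$ for small $\lambda$ — or, more robustly, by a Schur--Holmgren estimate on $H_b^{(\lambda)}-H_b$), we get $\|H_b^{(\lambda)}-H_b\|\le C|\lambda|$ uniformly in $b\in[0,b_{\max}]$, with $C$ independent of $b$ and of the choice of $\gamma_0$.

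Next I would use standard perturbation theory: since $\dist(z,\sigma(H_b))\ge d_0>0$ uniformly for $z\in\sC$ and $b$ in the relevant range, for $|\lambda|$ small enough (depending only on $d_0$ and $C$, not on $b$ or $z$) the operator $H_b^{(\lambda)}-z$ is invertible with $\|(H_b^{(\lambda)}-z)^{-1}\|\le 2/d_0$. Then I would compute the matrix elements of $(H_b-z)^{-1} = M_\lambda^{-1}(H_b^{(\lambda)}-z)^{-1}M_\lambda$, which gives
\begin{align*}
[(H_b-z)^{-1}]_{\gamma,\gamma'} = \e^{-\lambda(\rho(\gamma)-\rho(\gamma'))}\,[(H_b^{(\lambda)}-z)^{-1}]_{\gamma,\gamma'},
\end{align*}
and since the operator norm of each matrix element of any bounded generalized matrix $T$ is bounded by $\|T\|$, we get $\|[(H_b-z)^{-1}]_{\gamma,\gamma'}\|\le (2/d_0)\,\e^{-\lambda(\rho(\gamma)-\rho(\gamma'))}$. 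Choosing $\gamma_0=\gamma'$ and optimizing over $\lambda>0$ up to the allowed threshold $\lambda_0$ yields exponential decay $\|[(H_b-z)^{-1}]_{\gamma,\gamma'}\|\le (2/d_0)\e^{-\lambda_0|\gamma-\gamma'|}$ (with the bounded-gradient cutoff $\rho$ handled so this survives), which in particular dominates $C_N\jn{\gamma-\gamma'}^{-N}$ for every $N$, with $C_N$ depending only on $N$, $d_0$, $\lambda_0$ — hence independent of $b$ and $z$.

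The main obstacle I anticipate is making the conjugation argument clean in this ``operator-valued matrix'' category: one must verify that $M_\lambda$ genuinely maps $\sH$ into $\sH$ boundedly (forcing $\rho$ to be bounded, which is why one uses a truncated/regularized distance rather than $|\gamma-\gamma'|$ itself) and that the bound $\|H_b^{(\lambda)}-H_b\|\le C|\lambda|$ holds with a constant uniform in $b$ — this is where~\eqref{eq:Aggbdecay} is used, via a Schur--Holmgren estimate on the perturbed off-diagonal matrix $\{(\e^{\lambda(\rho(\gamma)-\rho(\gamma'))}-1)\,\e^{\I b\phi(\gamma,\gamma')}\cA_{\gamma\gamma',b}\}_{\gamma,\gamma'}$, using $|\e^{\lambda t}-1|\le |\lambda t|\e^{|\lambda t|}$ and the super-polynomial decay to control $\sum_\gamma |\gamma|\,\jn{\gamma}^{-N}\e^{\lambda_0|\gamma|}$. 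Alternatively — and perhaps more in the spirit of the rest of the paper — one can avoid the exponential weight altogether and prove polynomial decay directly by an iterated-commutator / discrete integration-by-parts argument: the commutator of $(H_b-z)^{-1}$ with the diagonal position operator $\Lambda$, $(\Lambda f)_\gamma=\gamma f_\gamma$, equals $-(H_b-z)^{-1}[H_b,\Lambda](H_b-z)^{-1}$, and $[H_b,\Lambda]$ is again a generalized matrix with super-polynomially decaying entries (its $(\gamma,\gamma')$ entry being $(\gamma'-\gamma)$ times that of $H_b$, which still decays faster than any polynomial); iterating $N$ times and using $\|(H_b-z)^{-1}\|\le 1/d_0$ at each stage bounds $\|(\gamma-\gamma')^\alpha [(H_b-z)^{-1}]_{\gamma,\gamma'}\|$ for all $|\alpha|\le N$, which is equivalent to the claimed estimate with $C_N$ independent of $b$ and $z$.
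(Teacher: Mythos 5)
Your primary argument (the Combes--Thomas conjugation) has a genuine gap that you yourself flirt with but ultimately paper over. Estimate~\eqref{eq:Aggbdecay} gives \emph{super-polynomial} decay, $\norm{\cA_{\gamma\gamma',b}}\le C_N\jn{\gamma-\gamma'}^{-N}$ for every $N$, but this is strictly weaker than exponential decay (compare $\e^{-\sqrt{|\gamma-\gamma'|}}$, which satisfies the bound for every $N$ but is killed by any exponential weight). Consequently the Schur--Holmgren sum you propose to control, namely $\sum_{\gamma}|\gamma|\jn{\gamma}^{-N}\e^{\lambda_0|\gamma|}$, diverges for \emph{every} $\lambda_0>0$ and every $N$, because $\e^{\lambda_0|\gamma|}$ eventually beats any fixed power of $\jn{\gamma}$. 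Truncating $\rho$ does not rescue this: to absorb the constant $\e^{2\lambda R}$ into a bound of the form $C|\lambda|$ you have to shrink $\lambda$ as $R$ grows, and the resulting off-diagonal decay you extract for the resolvent is then no better than a fixed power. In short, exponential off-diagonal decay of $(H_b-z)^{-1}$ is simply not a consequence of the hypotheses, and the conclusion of your first argument (a bound $\e^{-\lambda_0|\gamma-\gamma'|}$) over-claims.

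Your second, ``alternative'' route --- the iterated-commutator / discrete Beals argument --- is correct, and it is essentially what the paper does, just packaged differently. The paper conjugates by the one-parameter unitary groups $V_k(t)=\e^{\I t\Lambda_k}$ (where $(\Lambda_k f)_\gamma=\gamma_k f_\gamma$), observes that $t\mapsto Y_{k,b}(t)=V_k(t)H_bV_k(t)^*$ is $C^\infty$ in norm because its matrix elements are $\e^{\I(\gamma_k-\gamma_k')t}[H_b]_{\gamma,\gamma'}$, and then differentiates the identity $V_k(t)(H_b-z)^{-1}V_k(t)^*=(Y_{k,b}(t)-z)^{-1}$ $N$ times at $t=0$; the matrix element of the $N$-th derivative is exactly $\I^N(\gamma_k-\gamma_k')^N[(H_b-z)^{-1}]_{\gamma,\gamma'}$, while the right-hand side is a uniformly bounded polynomial expression in $(H_b-z)^{-1}$ and the derivatives $Y_{k,b}^{(j)}(0)$. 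Differentiating at $t=0$ is precisely taking iterated commutators with $\Lambda_k$, so the two arguments coincide in content; the paper's phrasing via bounded unitaries is merely a clean way to sidestep domain issues for the unbounded operator $\Lambda$, whereas in your version you would need to justify the formal identity $[\Lambda,(H_b-z)^{-1}]=-(H_b-z)^{-1}[\Lambda,H_b](H_b-z)^{-1}$ (e.g.\ by a cutoff-and-limit argument or by noting that the right-hand side is bounded and agrees with the left on a core). If you present the commutator route as the main argument and add that justification, you recover the paper's proof; the Combes--Thomas route should be dropped.
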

\begin{proof}
Let $k\in \{1,2,\dots,d\}$ and consider the family of unitary operators $V_k(t)\in B(\sH)$ given by
	\begin{equation*}
	(V_k(t)f)_\gamma :=\e^{\I\gamma_k t} f_\gamma.
	\end{equation*}
	The operator $Y_{k,b}(t):= V_k(t) H_b V_k(t)^*$ is isospectral with $H_b$ and 
	\begin{align}\label{hc100}
	V_k(t) (H_b-z)^{-1} V_k(t)^*=\big (Y_{k,b}(t)-z\big )^{-1}.
	\end{align}
	Using \eqref{eq:mainthm1} and \eqref{eq:Aggbdecay}, together with the identity 
$$[Y_{k,b}(t)]_{\gamma,\gamma'}= \e^{\I(\gamma_k-\gamma_k') t}[H_b]_{\gamma,\gamma'},$$	
	 it follows that the map $\R\ni t\mapsto Y_{k,b}(t)$ is infinitely many times differentiable in the norm topology. In particular, 
	 $$ [Y_{k,b}^{(j)}(0)]_{\gamma,\gamma'}=\I^j(\gamma_k-\gamma_k')^j [H_b]_{\gamma,\gamma'},\quad j\geq 1.$$
	 By standard arguments one now shows that the map 
	 $\R\ni t\mapsto \big (Y_{k,b}(t)-z\big )^{-1}$
	 is also differentiable and 
	 
	 $$\frac{\di}{\di t}(Y_{k,b}(t)-z\big )^{-1}=-(Y_{k,b}(t)-z\big )^{-1}Y_{k,b}^{'}(t)(Y_{k,b}(t)-z\big )^{-1}.$$
	 By induction one proves that the resolvent of $Y_{k,b}(t)$ is infinitely many times differentiable. Given $N$, one can express $\frac{\di^N}{\di t^N}(Y_{k,b}(t)-z\big )^{-1}|_{t=0}$ in terms only depending on $(H_b-z)^{-1}$ and $Y_{k,b}^{(j)}(0)$ with $1\leq j\leq N$. Now going back to \eqref{hc100} we see that by fixing a pair $\gamma,\gamma'$ and after differentiating $N$ times at $t=0$ we have:
	 $$ \I^N(\gamma_k-\gamma_k')^N [(H_b-z)^{-1}]_{\gamma,\gamma'}=\left [ \frac{\di^N}{\di t^N}(Y_{k,b}(t)-z\big )^{-1}|_{t=0}\right ]_{\gamma,\gamma'}.$$
	 Since the right hand side is uniformly bounded in $k$, $\gamma$ and $\gamma'$, the proof is completed by noticing that $\langle \gamma-\gamma'\rangle $ grows like $ \max_{k}\abs{\gamma_k-\gamma_k'}$.
\end{proof}

Define $S_{\delta b}(z)$ to be given by: 
$$ [S_{\delta b}(z)]_{\gamma,\gamma'}:=\e^{\I\delta b \varphi (\gamma,\gamma')} [(H_{b_0}-z)^{-1}]_{\gamma,\gamma'}.$$

Since both $H_{b_0}-z$ and $(H_{b_0}-z)^{-1}$ are strongly localized near the diagonal we get
\begin{align*}
[(H_{b_0}^{\delta b} - z)S_{\delta b}(z)]_{\gamma,\gamma''} &= \sum_{\gamma' \in \Z^d} \e^{\I\delta b\phi(\gamma,\gamma'')}\e^{\I\delta b\phi(\gamma-\gamma',\gamma'-\gamma'')} [ H_{b_0}-z ]_{\gamma,\gamma'} [ (H_{b_0}-z)^{-1}]_{\gamma',\gamma''} \\
&=[\id + \cO(\delta b)]_{\gamma,\gamma''}
\end{align*}
and for sufficiently small $|\delta b|$ we obtain
\begin{align*}
(H_{b_0}^{\delta b} - z)^{-1} = S_{\delta b}(z) (\id + \cO(\delta b))^{-1} = S_{\delta b}(z) + \cO(\delta b)
\end{align*}
uniformly in $z\in \sC$. 
By using this identity it follows that
\begin{align*}
\frac{\I}{2\pi} \int_\sC z(H_{b_0}^{\delta b}-z)^{-1} \d z &= \frac{\I}{2\pi}\int_\sC z \; S_{\delta b}(z)  \d z + \cO(\delta b) =\widetilde{T}_{b_0}^{\delta b} + \cO(\delta b) 
\end{align*}
which finishes the proof of \eqref{hc102}. 

\subsubsection{Step 3.} Due to \eqref{hc105} it is enough to prove that 
$$|\inf \sigma(\widetilde{T}_{b_0}^{\delta b})-e_{b_0}|\leq C|\delta b|.$$
We observe that when $\delta b=0$ we have $\widetilde{T}_{b_0}^{0}=T_{b_0}$, hence the above inequality is the same as
$$|\inf \sigma(\widetilde{T}_{b_0}^{\delta b})-\inf \sigma(\widetilde{T}_{b_0}^{0})|\leq C|\delta b|.$$

We also observe that the family $\widetilde{T}_{b_0}^{\delta b}$ defined in \eqref{hc101} is of the same type as the one we introduced in \eqref{eq:mainthm1}, where $\e^{\I b \varphi(\gamma,\gamma')}$ is replaced with $\e^{\I\delta b \varphi(\gamma,\gamma')}$ and $\cA_{\gamma\gamma',b}$ is replaced with $[T_{b_0}]_{\gamma,\gamma'}$. These operators are strongly localized in $\langle \gamma -\gamma'\rangle$ due to Lemma \ref{lemma-horia}. Thus we may apply the result about the Lipschitz continuity in $b$ of the "global" infimum of the spectrum which we have already studied in the first part of Theorem \ref{thm:main}(3), hence concluding the proof.

\end{document}